\documentclass[10pt]{amsart}
\usepackage{amsthm}
\usepackage{amssymb}
\usepackage{amsmath}
\input xy
\xyoption{all}
\usepackage{amsthm}
\usepackage{amssymb}
\usepackage{amsmath}
\usepackage[shortlabels]{enumitem}

\usepackage{ifpdf}
\ifpdf
\usepackage[pdftex,linktocpage]{hyperref}
\else
\usepackage[hypertex,linktocpage]{hyperref}
\fi

\input xy
\xyoption{all} 

\newtheorem{theorem}{Theorem}
\newtheorem*{theorem*}{Theorem}

\newtheorem{definition}[theorem]{Definition}
\newtheorem*{definition*}{Definition}
\newtheorem{lemma}[theorem]{Lemma}

\newtheorem{proposition}[theorem]{Proposition}
\newtheorem{remark}[theorem]{Remark}
\newtheorem{cor}[theorem]{Corollary}
\newtheorem*{cor*}{Corollary}

\newtheorem{question}{Question}

\newtheorem*{conjecture*}{Conjecture}
\numberwithin{theorem}{section}


%
%
%
%
%

\renewcommand\iff{%
\ifmmode\text{ if and only if }%
\else if and only if \fi}

\renewcommand{\and}{\wedge}

\renewcommand{\phi}{\varphi}

\newcommand{\Mod}{\textnormal{Mod}}
\renewcommand{\mod}{\text{mod}}

\newcommand{\Ab}{\text{Ab}}

\newcommand{\rad}{\textnormal{rad}}

\newcommand{\End}{\textnormal{End}}
\newcommand{\Hom}{\textnormal{Hom}}
\newcommand{\Ext}{\textnormal{Ext}}
\newcommand{\ann}{\textnormal{ann}}

\newcommand{\zg}{\textnormal{Zg}}
\newcommand{\Zg}{\textnormal{Zg}}

\newcommand{\pinj}{\textnormal{pinj}}

\newcommand{\mcal}[1]{\mathcal{#1}}

\newcommand{\st}{\ \vert \ }

\newcommand{\pp}{\textnormal{pp}}

\newcommand{\Q}{\mathbb{Q}}
\newcommand{\N}{\mathbb{N}}
\newcommand{\Z}{\mathbb{Z}}

\newcommand{\Tf}{\text{Tf}}
\newcommand{\Latt}{\text{Latt}}

%
%
%
\newcommand\hfuzzReset{\hfuzz=3pt}
\hfuzzReset
%
\newcommand\toleranceReset{\tolerance=1400}
\toleranceReset
%
\newcommand\emergencystretchReset{\emergencystretch=2ex}
\emergencystretchReset
\hbadness=10000
%
%
%
%
%
%
%
%
%
%
%

\title{Maranda's Theorem for Pure-Injective Modules and Duality}
\author{Lorna Gregory}
\address{Dipartimento di Matematica e Fisica, Universit\'a degli Studi della Campania ``Luigi
Vanvitelli'', Viale Lincoln 5, 81100 Caserta, Italy}
\email{Lorna.Gregory@gmail.com}

\thanks{The majority of this work was completed while the author employed by the University of Camerino.}

\subjclass[2010]{03C60 (primary), 03C98, 16G30, 16H20}

\keywords{Order over a Dedekind domain, Pure-injective, Ziegler spectrum}

\begin{document}

\begin{abstract}
Let $R$ be a discrete valuation domain with field of fractions $Q$ and maximal ideal generated by $\pi$. Let $\Lambda$ be an $R$-order such that $Q\Lambda$ is a separable $Q$-algebra. Maranda showed that there exists $k\in\N$ such that for all $\Lambda$-lattices $L$ and $M$, if $L/L\pi^k\simeq M/M\pi^k$ then $L\simeq M$. Moreover, if $R$ is complete and  $L$ is an indecomposable $\Lambda$-lattice, then $L/L\pi^k$ is also indecomposable. We extend Maranda's theorem to the class of $R$-reduced $R$-torsion-free pure-injective $\Lambda$-modules.

As an application of this extension, we show that if $\Lambda$ is an order over a Dedekind domain $R$ with field of fractions $Q$ such that $Q\Lambda$ is separable then the lattice of open subsets of the $R$-torsion-free part of the right Ziegler spectrum of $\Lambda$ is isomorphic to the lattice of open subsets of the $R$-torsion-free part of the left Ziegler spectrum of $\Lambda$.

Finally, with $k$ as in Maranda's theorem, we show that if $M$ is $R$-torsion-free and $H(M)$ is the pure-injective hull of $M$ then $H(M)/H(M)\pi^k$ is the pure-injective hull of $M/M\pi^k$. We use this result to give a characterisation of $R$-torsion-free pure-injective $\Lambda$-modules and describe the pure-injective hulls of certain $R$-torsion-free $\Lambda$-modules.

\end{abstract}

\maketitle

%
%

\section{Introduction}

Let $\Lambda$ be an order over a discrete valuation domain $R$ with maximal ideal generated by $\pi$ and field of fractions $Q$ such that $Q\Lambda$ is a separable $Q$-algebra. Maranda's theorem states that there exists $k_0\in\N$ such that for all $k\geq k_0+1$ and $\Lambda$-lattices $L,M$, $L/L\pi^k\cong M/M\pi^k$ implies $L\cong M$ and if $R$ is complete then $L$ indecomposable implies $L/L\pi^k$ is indecomposable.

In this paper we extend Maranda's theorem to a class of pure-injective modules over $\Lambda$. From our perspective at least, the natural non-finitely-presented generalisation of a $\Lambda$-lattice is an $R$-torsion-free $\Lambda$-module. This is because the smallest definable subcategory of $\Mod\text{-}\Lambda$ containing all (right) $\Lambda$-lattices is exactly the category, $\Tf_\Lambda$, of (right) $R$-torsion-free $\Lambda$-modules. We write ${_\Lambda}\Tf$ for the category of $R$-torsion-free left $\Lambda$-modules. An alternative non-finitely-presented version of a $\Lambda$-lattice, the generalised lattices, was introduced in \cite{GenLattButler} and further studied in \cite{LargeLattRump} and \cite{GenLattPrihPun}.

Every $R$-torsion-free $\Lambda$-module decomposes as a direct sum $D\oplus N$ of an $R$-divisible module $D$ and an $R$-reduced module $N$ i.e. $\bigcap_{i\in\N}N\pi^i=0$. If $D$ is divisible then $D/D\pi^k=0$. Thus we restrict our generalisation of Maranda's theorem further to the class of $R$-reduced $R$-torsion-free pure-injective $\Lambda$-modules.

In section \ref{SMarfun}, with $k_0$ as in the classical version of Maranda's theorem, we prove the following theorems.

\medskip

\noindent
{\bfseries Theorem 3.4.} \textit{Let $M,N$ be $R$-torsion-free $R$-reduced pure-injective $\Lambda$-modules. If $M/M\pi^k\cong N/N\pi^k$ for some $k\geq k_0+1$ then $M\cong N$.}

\medskip

\noindent
{\bfseries Theorem 3.5.} \textit{Let $k\geq k_0+1$. If $N$ is an indecomposable $R$-torsion-free $R$-reduced pure-injective $\Lambda$-module then $N/N\pi^k$ is indecomposable.}

\medskip

Note that $\Lambda$-lattices are pure-injective if and only if $R$ is complete. So the fact that we do not need to assume that $R$ is complete is not unexpected.


Using results from \cite{TfpartZgorders}, we get the following.

\medskip

\noindent
{\bfseries Theorem 3.8.} \textit{Let $k\geq k_0+1$. Suppose that $M$ is $R$-torsion-free and $R$-reduced. If $u:M\rightarrow H(M)$ is the pure-injective hull of $M$ then $\overline{u}:M/M\pi^k\rightarrow H(M)/H(M)\pi^k$ is the pure-injective hull of $M/M\pi^k$.}

\medskip

The modules $M/M\pi^k$ may be naturally viewed as modules over the Artin algebra $\Lambda/\Lambda\pi^k$. Our proofs of these theorems and their applications rely on the fact that the functor taking $M\in\Tf_\Lambda$ to $M/M\pi^k\in\Mod\text{-}\Lambda/\Lambda\pi^k$, which, for $k$ sufficiently large, we will refer to as \textbf{Maranda's functor}, is an interpretation functor. The original definition (see section \ref{S-prelim}) of an interpretation functor came out of the model theoretic notion of an interpretation. However, from an algebraic perspective, interpretation functors are just additive functors which commutes with direct limits and direct products.

Thanks to Maranda's theorem, in theory, in order to get information about the category of $\Lambda$-lattices we may instead study a subcategory of the category of modules over the Artin algebra $\Lambda/\Lambda\pi^k$. The drawback of both the classical version of Maranda's theorem and our extended version is that $\mod\text{-}\Lambda_k$, respectively $\Mod\text{-}\Lambda_k$, is almost always significantly more complicated than the category of $\Lambda$-lattices, respectively $\Tf_\Lambda$. For instance, the order $\Z_{(p)}C(p^2)$ is finite lattice type (see \cite{Butlerintreps}). The category of $\Z_{(p)}/p^2\Z_{(p)}$-free finitely generated $\Z_{(p)}/p^2\Z_{(p)}C(p^2)$-modules is wild \cite{Bondarenko}.

Despite the above, we will see in sections \ref{S-Piandpihulls} and \ref{S-duality} that being able to move from $\Tf_\Lambda$ to a module category over an Artin algebra has useful applications. Sections \ref{S-Piandpihulls} and \ref{S-duality}, are largely independent of each other.

Section \ref{S-Piandpihulls} presents applications of \ref{prespihulls} to pure-injectives and pure-injective hulls in $\Tf_{\Lambda}$. We give the following characterisation of pure-injective $R$-torsion-free $\Lambda$-modules. 

\medskip

\noindent
{\bfseries Theorem 4.6.} \textit{Let $M\in\Tf_\Lambda$. Then $M$ is pure-injective if and only if
\begin{enumerate}
\item $M/M\pi^k$ is pure-injective for all $k\in\N$ and
\item $M$ is pure-injective as an $R$-module.
\end{enumerate}}

\medskip

We also give information about the pure-injective hull of an $R$-reduced $R$-torsion-free module $M$ in terms of pure-injective hulls of $M/M\pi^k$ for all $k\geq k_0+1$. In particular, when $M$ is reduced, $R$-torsion-free and $M/M\pi^k$ is pure-injective for all $k\in \N$, we show, \ref{pihullquotientspi}, that the pure-injective hull of $M$ is the inverse limit of the $\Lambda$-modules $M/M\pi^k$ along the canonical projections.

The right Ziegler spectrum $\Zg_S$ of a ring $S$ is a topological space which captures the majority of model theoretic information about $\Mod\text{-}S$. The points of $\Zg_S$ are (isomorphism classes of) indecomposable pure-injective right $S$-modules and its closed sets correspond to definable subcategories of $\Mod\text{-}S$. If $\Lambda$ is an $R$-order then $\Zg_\Lambda^{tf}$, the torsion-free part of the Ziegler spectrum of $\Lambda$, is the closed set of indecomposable pure-injective modules which are $R$-torsion-free. This space was studied for $RG$ where $G$ is a finite group and $R$ is a Dedekind domain in \cite{TFpartZgRG}, for the $\widehat{\Z_{(2)}}$-order $\widehat{\Z_{(2)}}C_2\times C_2$ in \cite{TFpartkleinfour} and more recently, for general orders over Dedekind domains, in \cite{TfpartZgorders}. We will write ${_S}\Zg$ for the left Ziegler spectrum of $S$ and ${_\Lambda}\Zg^{tf}$ for the torsion-free part of the left Ziegler spectrum of $\Lambda$.

The space $\Zg_\Lambda^{tf}$ was described topologically for the $\widehat{\Z_{(2)}}$-order $\widehat{\Z_{(2)}}C_2\times C_2$ but not all the points were described as modules. As a practical application of our results on pure-injective hulls, with the help of results of Krause on generalised tubes in categories of modules over Artin algebras in \cite{KrauseGeneric}, we are able to describe, as $\widehat{\Z_{(2)}}C_2\times C_2$-modules, the pure-injective hulls of the Pr\"{u}fer like modules, denoted $T$ in \cite{TFpartkleinfour}. Moreover, using results of Butler \cite{ButlerKleinfour}, Dieterich \cite{ARquiversDieterich}, and Puninski and Toffalori \cite{TFpartkleinfour}, we show that the pure-injective hulls of these modules are indecomposable and thus are points of $\Zg^{tf}_\Lambda$. As far as we are aware, until now, the only points of $\Zg_\Lambda^{tf}$ for any order $\Lambda$ which have been explicitly described as modules are $\widehat{\Lambda}$-lattices, where $\widehat{R}$ is he completion of $R$ and $\widehat{\Lambda}:=\widehat{R}\otimes\Lambda$, and the $R$-divisible modules.

The theme of section \ref{S-duality} is links between $\Tf_\Lambda$ and ${_\Lambda}\Tf$. Here we extend our setting to included the case where $R$ is a Dedekind domain with field of fractions $Q$ and $\Lambda$ is an $R$-order such that $Q\Lambda$ is a separable $Q$-algebra.

Ivo Herzog \cite{Herzogduality} showed that for any ring $S$, the lattice of open subsets of $\Zg_S$ and the lattice of open subsets of ${_S}\Zg$ are isomorphic. Applying Herzog's result directly to $\Zg_\Lambda$, shows that the lattice of open subsets of $\Zg_\Lambda^{tf}$ is isomorphic to the lattice of open subsets of the closed subset of $R$-divisible modules in ${_\Lambda}\Zg$.  However, using our extended version of Maranda's theorem to move to $\Zg_{\Lambda/\Lambda\pi^k}$ then applying Herzog's duality allows us to prove, \ref{dualitymain}, that the lattice of open subsets of $\Zg^{tf}_\Lambda$ is isomorphic to the lattice of open subsets of ${_\Lambda}\Zg^{tf}$.

For $S$ a ring, the Krull-Gabriel dimension of $(\mod\text{-}S,\Ab)^{fp}$ the category of finitely presented additive functors from $\mod\text{-}S$, the category of finitely presented right $S$-modules, to $\Ab$, the category of abelian groups, is an ordinal valued measure of the complexity of $\Mod\text{-}S$. The Krull-Gabriel dimension of $(\mod\text{-}S,\Ab)^{fp}$ (respectively $(S\text{-}\mod,\Ab)^{fp}$) is equal to the m-dimension of the lattice of right (respectively left) pp formulas of $S$ (see \cite[13.2.2]{PSL}). Moreover, \cite[7.2.4]{PSL}, the m-dimension of the lattice of right pp formulas of $S$ is equal to the m-dimension of the lattice of left pp formulas of $S$.

Using results in \cite{TfpartZgorders}, we show, \ref{mdimdual}, that the m-dimension of the lattice of (right) pp formulas of $\Lambda$ with respect to the theory of $\Tf_{\Lambda}$ is equal to the m-dimension of the lattice of (left) pp formulas of $\Lambda$ with respect to the theory of ${_\Lambda}\Tf$. As a consequence, we show, \ref{KGdual}, that the Krull-Gabriel dimension of $(\Latt_{\Lambda},\Ab)^{fp}$ is equal to the Krull-Gabriel dimension of $({_\Lambda}\Latt,\Ab)^{fp}$ where $\Latt_{\Lambda}$ is the category of right $\Lambda$-lattices and ${_\Lambda}\Latt$ is the category of left $\Lambda$-lattices.

Before starting the main body of the paper, the reader should be \textit{warned} that the word \textit{lattice} has two meanings in this paper; the first, a particular type of $\Lambda$-module and the second a partially ordered set with meets and joins. Since these objects are so different in character, it shouldn't cause confusion.

\section{Preliminaries}\label{S-prelim}

We start by introducing some notation and basic definitions relating to orders. For a general introduction to orders and their categories of lattices we suggest \cite{CurtisReinerV1}.

Let $R$ be a Dedekind domain. An $R$-order $\Lambda$ is an $R$-algebra which is finitely generated and $R$-torsion-free as an $R$-module. A $\Lambda$-lattice is a finitely generated $\Lambda$-module which is $R$-torsion-free. We will write $\Latt_\Lambda$ (respectively ${_\Lambda}\Latt$) for the category of right (respectively left) $\Lambda$-lattices and $\Tf_\Lambda$ (respectively ${_\Lambda}\Tf$) for the category of right (respectively left) $R$-torsion-free modules.

Let $\text{Max}R$ denote the set of prime ideals of $R$. If $P\in\text{Max}R$ then $\Lambda_P$, the localisation of $\Lambda$ at the multiplicative set $R\backslash P$, is an $R_P$-order. Let $\widehat{R_P}$ and $\widehat{\Lambda_P}$ denote the $P$-adic completions of $R_P$ and $\Lambda_P$ respectively. Note that $\widehat{\Lambda_P}$ is an $\widehat{R_P}$-order. If $L\in \Latt_\Lambda$  and $P\in\text{Max}R$ then $L_P$ will denote $R_P\otimes_RL$. If $L\in\Latt_\Lambda$ then $\widehat{L_P}$ will denote the $P$-adic completion of $L$. Note that if $L\in\Latt_\Lambda$ then $L_P$ is a $\Lambda_P$-lattice and $\widehat{L_P}$ is a $\widehat{\Lambda_P}$-lattice.

We now give a summary of the notions from model theory of modules that will be used in this paper. For a more detailed introduction the reader is referred to \cite{MikesBluebook} and \cite{PSL}.

We will write $\mathbf{x}$ for tuples of variables and likewise $\mathbf{m}$ for tuples of elements in a module.

Let $S$ be a ring. A (right) \textbf{pp-$n$-formula} is a formula in the language of $S$-modules of the form \[\exists \mathbf{y} \ (\mathbf{y},\mathbf{x})A=0\] where $A$ is an $(l+n)\times m$ matrix with entries from $S$, $\mathbf{y}$ is an $l$-tuple of variables, $\mathbf{x}$ is an $n$-tuple of variables and  $l,n,m$ are natural numbers.

If $M\in\Mod\text{-}S$ then we write $\phi(M)$ for the \textbf{solution set} of $\phi$ in $M$. For any pp-$n$-formula $\phi$ and $S$-module $M$, $\phi(M)$ is a $\End(M)$-submodule of $M^n$ under the diagonal action of $\End(M)$ on $M^n$.

After identifying (right) pp-$n$-formulas $\phi,\psi$ such that $\phi(M)=\psi(M)$ for all $M\in\Mod\text{-}S$, the set of pp-$n$-formulas becomes a lattice under inclusion of solution sets i.e. $\psi\leq \phi$ if $\psi(M)\subseteq \psi(M)$ for all $M\in \Mod\text{-}S$. We denote this lattice by $\pp_S^n$ and the left module version by ${_S}\pp^n$. If $X$ is a collection of (right) $S$-modules then we write $\pp_S^nX$ for the quotient of $\pp_S^n$ under the equivalence relation $\phi\sim_{X}\psi$ if $\phi(M)=\psi(M)$ for all $M\in X$.

A \textbf{pp-$n$-pair}, written $\phi\, / \,\psi$, is a pair of pp-$n$-formulas $\phi,\psi$ such that $\phi(M)\supseteq \psi(M)$ for all $S$-modules $M$. If $\phi\, / \,\psi$ is a pp-$n$-pair then we write $[\psi,\phi]$ for the interval in $\pp_R^n$, that is, the set of $\sigma\in\pp_S^n$ such that $\psi\leq \sigma\leq \phi$. If $X$ is a collection of (right) $S$-modules, we will write $[\psi,\phi]_X$ for the corresponding interval in $\pp_S^nX$.

If $\mathbf{m}$ is an $n$-tuple of elements from a module $M$ then the \textbf{pp-type} of $\mathbf{m}$ is the set of pp-$n$-formulas $\phi$ such that $\mathbf{m}\in\phi(M)$. If $M\in \mod\text{-}R$ and $\mathbf{m}$ is an $n$-tuple of element from $M$ then, \cite[1.2.6]{PSL}, there exists $\phi\in\pp_R^n$ such that $\psi$ is in the pp-type of $\mathbf{m}$ if and only if $\psi\geq \phi$.

%
%

For each $n\in\N$, Prest defined a lattice anti-isomorphism $D:\pp_S^n\rightarrow {_S}\pp^n$ (see \cite[section 1.3.1]{PSL} and \cite[8.21]{MikesBluebook}). As is standard, we denote its inverse ${_S}\pp^n\rightarrow \pp_S^n$ also by $D$. Apart from the fact that for $a\in S$, $D(xa=0)$ is $a|x$ and $D(a|x)$ is $ax=0$, we will not need to explicitly take the dual of a pp formula here, so we will not give its definition.

An embedding $f:M\rightarrow N$ is a \textbf{pure-embedding} if for all $\phi\in\pp_S^1$, $\phi(N)\cap f(M)=f(\phi(M))$. Equivalently, for all $L\in S\text{-}\mod$, $f\otimes -: M\otimes L\rightarrow N\otimes L$ is an embedding. We say $N$ is \textbf{pure-injective} if every pure-embedding $g:N\rightarrow M$ is a split embedding. Equivalently, $N$ is pure-injective if and only if it is \textbf{algebraically compact} \cite[4.3.11]{PSL}. That is, for all $n\in\N$, if for each $i\in \mcal{I}$, $\mathbf{a_i}\in N$ is an $n$-tuple and $\phi_i$ is a pp-$n$-formula then $\bigcap_{i\in \mcal{I}}\mathbf{a_i}+\phi_i(N)=\emptyset$ implies there is some finite subset $\mcal{I}'$ of $\mcal{I}$ with $\bigcap_{i\in \mcal{I}'}\mathbf{a_i}+\phi_i(N)=\emptyset$.

We will write $\pinj_S$ (respectively ${_S}\pinj$) for the set of (isomorphism types of) indecomposable pure-injective right (respectively left) $S$-modules.

We say a pure-embedding $i:M\rightarrow N$ with $N$ pure-injective is a \textbf{pure-injective hull} of $M$ if for every other pure-embedding $g:M\rightarrow K$ where $K$ is pure-injective, there is a pure-embedding $h:N\rightarrow K$ such that $hf=g$. The pure-injective hull of $M$ is unique up to isomorphism over $M$ and we will write $H(M)$ for any module $N$ such that the inclusion of $M$ in $N$ is a pure-injective hull of $M$.

The following lemma will be used in section \ref{S-duality}. Its proof is exactly as in \cite[3.1]{TFpartZgRG}.

\begin{lemma}\label{pihulllattices}
Let $M$ be a $\Lambda$-lattice. The pure-injective hull of $M$ is isomorphic to $\prod_{P\in\text{Max}R}\widehat{M_P}$.
\end{lemma}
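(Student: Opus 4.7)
The plan is to verify that $N := \prod_{P\in\text{Max}R}\widehat{M_P}$, together with the diagonal map $\iota: M \to N$, is a pure-injective hull of $M$, by checking in turn: (i) $N$ is pure-injective, (ii) $\iota$ is a pure embedding, and (iii) $\iota$ satisfies the universal property in the definition of pure-injective hull (equivalently, $\iota$ is pure-essential in $N$).

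For (i), each $\widehat{M_P}$ is a finitely generated module over the complete semi-local Noetherian order $\widehat{\Lambda_P}$, hence complete and separated in its Jacobson radical-adic topology, and therefore algebraically compact. This gives pure-injectivity over $\widehat{\Lambda_P}$ and \textit{a fortiori} over $\Lambda$; arbitrary products of pure-injectives are pure-injective. For (ii), injectivity comes from $M\hookrightarrow M_P$ ($R$-torsion-freeness of $M$) composed with $M_P\hookrightarrow\widehat{M_P}$ (Krull's intersection theorem). The individual maps $M\to\widehat{M_P}$ are typically not pure in $\Mod\text{-}\Lambda$, but the diagonal $\iota$ into the product is. To verify this, I would check that for every pp-formula $\phi$, $\phi(N)\cap \iota(M)=\iota(\phi(M))$, using the identifications $\phi(N)=\prod_P\phi(\widehat{M_P})$; $\phi(\widehat{M_P})\cap M_P=\phi(M_P)$ (by faithful flatness of $\widehat{\Lambda_P}$ over $\Lambda_P$ and finite presentation of $M$); $\phi(M_P)=\phi(M)_P$ (by compatibility of pp-formulas with localization for finitely presented modules over Noetherian rings); and $\phi(M)=\bigcap_P\phi(M)_P$ inside $M^n$, which holds because $\phi(M)$ is a finitely generated $R$-submodule of the $R$-torsion-free module $M^n$ over the Dedekind domain $R$.

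The main obstacle is (iii). Following the pattern of [3.1, TFpartZgRG], I would show that $\iota$ is pure-essential: for every $(x_P)_P\in N$ and every pp-pair $\phi/\psi$ with $(x_P)_P\in\phi(N)\setminus\psi(N)$, there exists $m\in M$ with $\iota(m)-(x_P)_P\in\psi(N)$ and $\iota(m)\in\phi(N)\setminus\psi(N)$. The key tool is the Chinese Remainder Theorem over the Dedekind domain $R$, which allows simultaneous approximation of the components $x_P$ at finitely many primes up to arbitrary powers of those primes; components at the remaining primes are handled automatically because $\phi(M)$ and $\psi(M)$ are finitely generated $R$-submodules of $M^n$, so the pp-pair $\phi/\psi$ is controlled by only finitely many primes at any given approximation level. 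Converting this local Chinese Remainder calculation into a global pp-type statement — and thereby producing the required witness $m\in M$ — is the technical heart of the argument, and is exactly what allows one to conclude that no proper pure submodule of the pure-injective $N$ contains $\iota(M)$.
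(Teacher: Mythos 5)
Your steps (i) and (ii) are essentially sound (and could be shortened: since $\Lambda$ is finitely generated as an $R$-module, purity and pure-injectivity of $\Lambda$-modules can be tested after restriction to $R$, which gives both claims at once), but step (iii), which you yourself identify as the heart, is both misstated and not carried out. The condition you propose to verify is not pure-essentiality and is in fact impossible to establish: taking the pp-pair $(x{=}x)\,/\,(x{=}0)$, your condition demands, for every nonzero $(x_P)_P\in N$, an $m\in M$ with $\iota(m)-(x_P)_P=0$, i.e.\ that $\iota$ be surjective — already false for $R=\Lambda=\mathbb{Z}$, $M=\mathbb{Z}$, $N=\prod_p\widehat{\mathbb{Z}_p}$. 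Likewise your closing target, that no proper \emph{pure} submodule of $N$ contains $\iota(M)$, is strictly stronger than being a hull and fails here too (a proper elementary substructure of $N$ containing $\mathbb{Z}$, which exists by L\"owenheim--Skolem, is pure). What must be shown is that no proper \emph{direct summand} of $N$ contains $\iota(M)$; a usable sufficient form is the linking condition this paper itself uses in the proof of \ref{prespihulls} via \ref{onevariable}: for every nonzero $c\in N$ there exist $a\in M$ and a pp formula $\chi(x,y)$ with $\chi(a,c)$ holding in $N$ and $\chi(a,0)$ failing. Your Chinese Remainder discussion does not produce such formulas — arbitrary pp formulas over $\Lambda$ are not congruence conditions at finitely many primes — and you explicitly defer exactly this conversion, so the essentiality step is missing rather than merely compressed.

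For comparison, the paper proves \ref{pihulllattices} simply by invoking \cite[3.1]{TFpartZgRG}, and the argument there avoids any element-wise essentiality computation by reducing to the base ring: purity and pure-injectivity over $\Lambda$ coincide with purity and pure-injectivity over $R$ because $\Lambda$ is module-finite over $R$; over the Dedekind domain $R$ the lattice $M$ is finitely generated projective, hence a direct summand of some $R^n$, and $H_R(R)\cong\prod_{P\in\text{Max}R}\widehat{R_P}$ yields $H_R(M)\cong\prod_{P\in\text{Max}R}\widehat{M_P}$; finally, if $\prod_P\widehat{M_P}=H_\Lambda(M)\oplus C$ as $\Lambda$-modules with $M\subseteq H_\Lambda(M)$, this is also an $R$-module decomposition, so pure-essentiality of $M$ in its $R$-hull forces $C=0$. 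If you want to salvage your outline, either replace (iii) by this reduction (which also subsumes your (i) and (ii)), or prove the linking condition for $N=\prod_P\widehat{M_P}$ directly; as written, the proposal does not establish the lemma.
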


A full subcategory of a module category $\Mod\text{-}S$ is a \textbf{definable subcategory} if it satisfies the equivalent conditions in the following theorem.
\goodbreak
\begin{theorem}\cite[3.4.7]{PSL}\label{defsubcatdef} The following statements are equivalent for $\mcal{X}$ a full subcategory of $\Mod\text{-}S$.
\begin{enumerate}
\item There exists a set of pp-pairs $\{\phi_i/\psi_i\st i\in I\}$ such that $M\in\mcal{X}$ if and only $\phi_i(M)=\psi_i(M)$ for all $i\in I$.
\item $\mcal{X}$ is closed under direct products, direct limits and pure submodules.
\item $\mcal{X}$ is closed under direct products, reduced products and pure submodules.
\item $\mcal{X}$ is closed under direct products, ultrapowers and pure submodules.
\end{enumerate}
\end{theorem}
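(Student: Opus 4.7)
The plan is to prove the cycle $(1)\Rightarrow(2)\Rightarrow(3)\Rightarrow(4)\Rightarrow(1)$; the first three implications are formalities, and the substance of the theorem lies in $(4)\Rightarrow(1)$.

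For $(1)\Rightarrow(2)$, I would verify on a single pp-formula $\phi$ the three facts: $\phi(\prod_j M_j)=\prod_j \phi(M_j)$, since existential witnesses can be chosen coordinatewise; commutation with direct limits, because a witness in a colimit is pulled back from some stage; and $\phi(N)=\phi(M)\cap N^n$ for any pure submodule $N\subseteq M$, which is essentially the definition of purity. Applying each of these to the pairs $\phi_i/\psi_i$ defining $\mcal{X}$ yields the three stated closure properties. For $(2)\Rightarrow(3)$, the key observation is that any reduced product $\prod_j M_j/F$ is the directed colimit of the subproducts $\prod_{j\in J}M_j$ as $J$ ranges over $F$ ordered by reverse inclusion, with canonical projections as transition maps; combined with closure under products and direct limits this gives closure under reduced products. $(3)\Rightarrow(4)$ is immediate because an ultrapower is, by definition, a reduced product with respect to an ultrafilter.

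The essential direction is $(4)\Rightarrow(1)$. Let $\Sigma$ denote the collection of pp-pairs $\phi/\psi$ that close on every module in $\mcal{X}$ (that is, $\phi(M)=\psi(M)$ for all $M\in\mcal{X}$), and let $\mcal{X}'$ be the class of $S$-modules on which every pair in $\Sigma$ closes. Then $\mcal{X}\subseteq\mcal{X}'$ and $\mcal{X}'$ has exactly the form demanded by (1), so it suffices to prove $\mcal{X}'\subseteq\mcal{X}$. The strategy is, given $N\in\mcal{X}'$, to embed $N$ purely into an ultraproduct $U$ of modules from $\mcal{X}$: such a $U$ lies in $\mcal{X}$ by the two product-type closure properties, after which closure under pure submodules forces $N\in\mcal{X}$.

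Producing the pure embedding is the main obstacle. For a finite tuple $\mathbf{n}$ in $N$ and a pp-formula $\psi\notin p_{\mathbf{n}}(N)$, membership $N\in\mcal{X}'$ forces that no finite conjunction $\phi$ of formulas from $p_{\mathbf{n}}(N)$ satisfies $\phi(M)\subseteq\psi(M)$ uniformly on $\mcal{X}$ --- otherwise $\phi/(\phi\wedge\psi)$ would lie in $\Sigma$ and separate $\mathbf{n}$ from $\mcal{X}$. Hence each such $\phi$ is realised by a tuple in $\phi(M)\setminus\psi(M)$ for some $M\in\mcal{X}$. Taking products of these witnesses and then a sufficiently saturated ultrapower produces a module in $\mcal{X}$ realising the full pp-type of $\mathbf{n}$ while negating every pp-formula that $\mathbf{n}$ negates; arranging this coherently for every tuple of $N$ at once, via a single large saturated ultraproduct, yields a tuple-preserving map $N\to U$ that preserves both satisfaction and non-satisfaction of every pp-formula, which is exactly the condition for a pure embedding. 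The delicate point is organising the saturation and indexing so that everything coheres in a single ultraproduct drawn from $\mcal{X}$; the full construction is carried out in \cite[3.4.7]{PSL}.
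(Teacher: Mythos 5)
This statement is quoted in the paper without proof (it is imported verbatim from \cite[3.4.7]{PSL}), so there is no in-paper argument to compare against; judged on its own, your sketch is the standard proof and is essentially the one given in the cited source, and it is correct in outline. The only point to tighten is in $(4)\Rightarrow(1)$: the hypotheses give closure under ultrapowers, not arbitrary ultraproducts, so you must arrange the construction so that the receiving module is literally an ultrapower of a single product of members of $\mcal{X}$ (e.g.\ first form one product $M_0\in\mcal{X}$ containing, for every finite constraint $(\mathbf{n},\Phi,\Psi)$, a tuple in $\bigwedge\Phi(M_0)\setminus\bigcup\Psi(M_0)$, and then pass to a suitable ultrapower of $M_0$); your phrase ``products of these witnesses and then a sufficiently saturated ultrapower'' does accommodate this, but calling the target an ``ultraproduct of modules from $\mcal{X}$'' glosses over exactly the step that needs care.
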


For an $R$-order $\Lambda$, a particularly important definable subcategory is, $\Tf_\Lambda$, the class of all $R$-torsion-free $\Lambda$-modules. It is the class of $\Lambda$-modules such that for all $r\in R$, the solution set of $xr=0$ in $M$ is equal to the solution set of $x=0$ in $M$.

Given a class of modules $\mcal{C}$, let $\langle \mcal{C}\rangle $ denote the smallest definable subcategory containing $\mcal{C}$. Since all modules in $\Tf_\Lambda$ are direct unions of their finitely generated submodules and a finitely generated $R$-torsion-free module is a $\Lambda$-lattice, $\langle \Latt_\Lambda\rangle=\Tf_\Lambda$.

If $\mcal{C}\subseteq \Mod\text{-}S$ then we will write $\pinj(\mcal{C})$ for the set of (isomorphism types of) indecomposable pure-injective $S$-modules contained in $\mcal{C}$. By \cite[5.1.4]{PSL}, definable subcategories of $\Mod\text{-}S$ are determined by the indecomposable pure-injective $S$-modules they contain i.e. $\mcal{C}=\langle\pinj(\mcal{C})\rangle$.

The (right) \textbf{Ziegler spectrum} of a ring $S$, denoted $\zg_S$, is a topological space whose points are isomorphism classes of indecomposable pure-injective (right) $S$-modules and which has a basis of open sets given by:
\[(\phi\, / \,\psi)=\{M\in\pinj_S \st \phi(M)\supsetneq\psi(M)\and\phi(M)\}\]
where $\varphi,\psi$ range over (right) pp-$1$-formulas. We write ${_S}\Zg$ for the left Ziegler spectrum of $S$.

The sets $(\phi\, / \,\psi)$ are compact, in particular, $\Zg_S$ is compact.

From (i) of \ref{defsubcatdef}, it is clear that if $\mcal{X}$ is a definable subcategory of $\Mod\text{-}S$ then $\mcal{X}\cap \text{pinj}_S$ is a closed subset of $\Zg_S$ and that all closed subsets of $\Zg_S$ arise in this way. Since definable subcategories are determined by the indecomposable pure-injective modules they contain, if $\mcal{X},\mcal{Y}$ definable subcategories of $\Mod\text{-}S$, then $\mcal{X}\cap \Zg_S=\mcal{Y}\cap \Zg_S$ if and only if $\mcal{X}=\mcal{Y}$. Thus there is an inclusion preserving correspondence between the closed subsets of $\Zg_S$ and the definable subcategories of $\Mod\text{-}S$. If $\mcal{X}$ is a definable subcategory of $\Mod\text{-}S$ then we will write $\Zg(\mcal{X})$ for the Ziegler spectrum of $\mcal{X}$, that is, $\mcal{X}\cap\Zg_S$ with the topology inherited from $\Zg_S$. When $\Lambda$ is an $R$-order, we will write $\Zg_\Lambda^{tf}$ (respectively ${_\Lambda}\Zg^{tf}$) for $\Zg(\Tf_\Lambda)$ (respectively $\Zg({_\Lambda}\Tf)$).

We finish this section by introducing interpretation functors and proving a result about them which we will need in section \ref{S-duality}.

Let $\mcal{C}\subseteq \Mod\text{-}S$ and $\mcal{D}\subseteq \Mod\text{-}T$ be definable subcategories.  Let $\phi/\psi$ be a pp-$m$-pair over $S$ and for each $t\in T$, let $\rho_t(\overline{x},\overline{y})$ be a pp-$2m$-formula such that for each $M\in\mcal{C}$, the solution set $\rho_t(M,M)\subseteq M^m\times M^m$ defines an endomorphism $\rho_t^M$ of the abelian group $\phi(M)/\psi(M)$ and such that $\phi(M)/\psi(M)$ is an $S$-module in $\mcal{D}$ when for all $t\in T$, the action of $t$ on $\phi(M)/\psi(M)$ is given by $\rho^M_t$. In this situation $(\phi/\psi;(\rho_t)_{t\in T})$ defines an additive functor $I:\mcal{C}\rightarrow \mcal{D}$. Following \cite{Interpretingmodules}, we call any functor equivalent to one defined in this way an \textbf{interpretation functor}.

From the definition it is clear that for $k\in\N$, the functor $I:\Tf_\Lambda\rightarrow \Mod\text{-}\Lambda/\pi^k\Lambda$ which send $M\in\Tf_\Lambda$ to $M/M\pi^k$ is an interpretation functor. We will consider another interpretation functor, Butler's functor, at the end of section \ref{S-Piandpihulls}.

The following theorem, due to Prest in full generality and Krause in a special case, gives a completely algebraic characterisation of interpretation functors.

\begin{theorem}\cite[25.3]{Defaddcats}\cite[7.2]{Exdefcat}
An additive functor $I:\mcal{C}\rightarrow \mcal{D}$ is an interpretation functor if and only if it commutes with direct products and direct limits.
\end{theorem}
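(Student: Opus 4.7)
The plan is to prove the two directions separately, with essentially all the content in the ``commutes with products and direct limits implies interpretation'' direction.

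For the forward direction, suppose $I$ is determined by the data $(\phi/\psi;(\rho_t)_{t\in T})$. Since the solution set of a pp-formula in a direct product is the direct product of its solution sets in the factors, one has $\phi(\prod_i M_i)=\prod_i\phi(M_i)$ and similarly for $\psi$. The canonical isomorphism $\prod_i\phi(M_i)/\prod_i\psi(M_i)\cong \prod_i(\phi(M_i)/\psi(M_i))$, together with the fact that the $T$-action is itself pp-definable and therefore commutes with direct products coordinatewise, gives $I(\prod_i M_i)\cong \prod_i I(M_i)$. The analogous argument for direct limits uses that pp-formulas commute with direct limits of modules.

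For the converse, the approach I would take is via free realisations of pp-pairs. Each pp-$n$-pair $\phi/\psi$ admits a free realisation: a finitely presented module $A\in\mcal{C}$ with distinguished $n$-tuple $\mathbf{a}$ whose pp-type is generated by $\phi$ and for which $\mathbf{a}+\psi(A)$ is a canonical generator of $\phi(A)/\psi(A)$. The strategy is to use preservation of direct limits to reduce $I$ to its behaviour on finitely presented objects of $\mcal{C}$, and preservation of direct products (which in particular entails preservation of ultraproducts) to extract, for each free realisation and each element of $I(A)$, a single pp-formula on the source side that uniformly controls its pp-type on the target side. Applied to the free realisation of the trivial pair $(x=x)/(x=0)$ this produces a candidate pp-pair for the underlying functor, and applied to the map ``multiplication by $t$'' for each $t\in T$ it produces the required operations $\rho_t$.

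The hardest step is the coherence check: one has to verify that the pp-formulas produced for individual elements and individual ring elements assemble into the \emph{uniform} data of an interpretation, valid on all $M\in\mcal{C}$ simultaneously. In Prest's proof \cite{Defaddcats} this is handled by identifying the category of product- and direct-limit-preserving additive functors $\mcal{C}\to\mcal{D}$ with a suitable localisation of a category of pp-pairs, so that interpretation functors are the tautological objects; in Krause's framework \cite{Exdefcat} the same coherence is obtained by transporting the generator of the associated coherent functor category through $I$ and checking compatibility with the relevant exact structures. Either way, product- and direct-limit-preservation are precisely what is needed to force this coherence, since compactness supplies finiteness of pp-types in the image while continuity of $I$ along finitely presented approximations ensures definability on the nose.
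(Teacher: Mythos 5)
Note first that the paper does not prove this statement at all: it is quoted verbatim from the sources it cites (Prest's \emph{Definable additive categories}, 25.3, and Krause's \emph{Exactly definable categories}, 7.2), so there is no in-paper proof to compare with; your attempt has to stand on its own. The forward direction you give is fine and standard: solution sets of pp-formulas commute with direct products and with direct limits, so any functor presented by data $(\phi/\psi;(\rho_t)_{t\in T})$ does too. The problem is the converse, which is where essentially all the content lies, and there what you write is not a proof. The decisive step --- turning the formulas extracted for individual elements and individual $t\in T$ into a \emph{single} uniform datum $(\phi/\psi;(\rho_t)_{t\in T})$ valid on every $M\in\mcal{C}$ simultaneously --- is exactly the theorem, and you dispose of it by describing how Prest and Krause handle it in their proofs rather than by an argument of your own. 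Appealing to the mechanism of the result one is asked to prove is a citation, not a proof.

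Beyond that, the route you sketch has a substantive obstruction. Free realisations of pp-pairs are finitely presented $S$-modules and need not lie in the definable subcategory $\mcal{C}$ at all (if $\mcal{C}$ is, say, the class of divisible or of torsion-free divisible abelian groups, it contains no nonzero finitely presented module), and a definable category is in general not generated under direct limits by its finitely presented objects; so ``use preservation of direct limits to reduce $I$ to its behaviour on finitely presented objects of $\mcal{C}$'' does not get off the ground. The actual arguments avoid this by never working with finitely presented objects inside $\mcal{C}$: one uses the equivalence between a definable category and the exact functors on its associated small abelian category of pp-pairs, shows that a product- and direct-limit-preserving $I:\mcal{C}\rightarrow\mcal{D}$ induces an exact functor from the pp-pair category of $\mcal{D}$ to that of $\mcal{C}$, and then reads off $\phi/\psi$ as the image of the tautological pair $(x=x)/(x=0)$ and the $\rho_t$ as the images of the multiplication maps; uniformity is automatic because the data are produced at the level of the functor category rather than element by element. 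A correct write-up would either carry out that identification or replace it by a genuinely elementary compactness argument --- as it stands, the converse direction is a gap.
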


Define $\ker I$ to be the definable subcategory of objects $L\in\mcal{C}$ such that $IL=0$. For $\mcal{D}'$ a definable subcategory of $\mcal{D}$, let $I^{-1}\mcal{D}'$ be the definable subcategory of objects $L\in\mcal{C}$ such that $IL\in\mcal{D}'$.

The following lemma is used in various places in the literature. It follows easily from (ii) of \ref{defsubcatdef}.

\begin{lemma}\label{imintfunctorpuresubmod}
Let $I:\mcal{C}\rightarrow\mcal{D}$ be an interpretation functor and $\mcal{C}'$ a definable subcategory of $\mcal{C}$. Then the closure of $I\mcal{C}'$ under pure-subobjects is a definable subcategory of $\mcal{D}$.
\end{lemma}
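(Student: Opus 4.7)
The plan is to apply criterion (iii) of Theorem \ref{defsubcatdef}: a full subcategory of $\Mod\text{-}T$ is definable if and only if it is closed under direct products, reduced products, and pure submodules. Write $\mcal{E}$ for the closure of $I\mcal{C}'$ under pure submodules in $\mcal{D}$. Closure of $\mcal{E}$ under pure submodules is automatic, so I only need to verify closure under direct products and reduced products.

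For direct products, let $(N_j)_{j\in J}$ be a family in $\mcal{E}$ and, for each $j$, fix $M_j\in\mcal{C}'$ together with a pure embedding $N_j\hookrightarrow IM_j$. Products of pure embeddings are pure, since for every pp-formula $\phi$ one has $\phi\bigl(\prod_j IM_j\bigr)=\prod_j\phi(IM_j)$, so purity reduces to purity in each coordinate. Hence $\prod_j N_j\hookrightarrow\prod_j IM_j$ is a pure embedding. Because $I$ commutes with direct products, $\prod_j IM_j\simeq I\bigl(\prod_j M_j\bigr)$, and because $\mcal{C}'$ is definable, $\prod_j M_j\in\mcal{C}'$. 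Thus $\prod_j N_j$ is a pure submodule of an object of $I\mcal{C}'$, hence in $\mcal{E}$.

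For reduced products, I use that $\prod_j M_j/\mcal{F}$ can be written as the direct limit $\varinjlim_{X\in\mcal{F}}\prod_{j\in X}M_j$ along the canonical projections. Since $I$ commutes with both direct products and direct limits, it therefore commutes with reduced products; and reduced products of pure embeddings are again pure (either directly, or by realising the reduced product as a directed colimit of products, each of which preserves purity). The same argument as above then shows that $\prod_j N_j/\mcal{F}$ is a pure submodule of $I\bigl(\prod_j M_j/\mcal{F}\bigr)\in I\mcal{C}'$, hence lies in $\mcal{E}$.

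I do not anticipate a real obstacle: the only step that needs a moment's thought is checking that interpretation functors commute with reduced products, but this is immediate from the Prest--Krause characterization together with the presentation of reduced products as directed colimits of direct products. If one preferred criterion (iv), the argument for ultrapowers is formally identical.
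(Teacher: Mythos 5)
Your argument is correct. The paper itself gives no written proof of this lemma; it only remarks that the statement ``follows easily from (ii) of Theorem \ref{defsubcatdef}'', i.e.\ from closure under direct products, direct limits and pure submodules. You instead verify criterion (iii) (direct products, reduced products, pure submodules), and this is a sensible deviation: the one closure condition that is \emph{not} immediate for the pure-submodule closure of $I\mcal{C}'$ is direct limits, because the maps $N_i\rightarrow N_j$ of a directed system of pure subobjects $N_i\subseteq IM_i$ need not lift to maps $IM_i\rightarrow IM_j$, so one cannot simply take the colimit of the ambient objects. The standard way around this is exactly the device you use anyway -- embed the colimit (or work directly) in a reduced product $\prod_j IM_j/\mcal{F}\cong I\bigl(\prod_j M_j/\mcal{F}\bigr)$ -- so checking (iii) from the start avoids the issue altogether. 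All the ingredients you invoke are standard and correct: pp formulas commute with products and with reduced products, so (reduced) products of pure embeddings are pure; reduced products are directed colimits of products, so an interpretation functor, commuting with products and direct limits, commutes with reduced products; and $\mcal{C}'$, being definable, is closed under the relevant constructions so that $\prod_j M_j$ and $\prod_j M_j/\mcal{F}$ stay in $\mcal{C}'$. In short: same spirit as the route the paper points to, but your choice of criterion makes the verification genuinely cleaner, at the cost of the (easy) auxiliary facts about reduced products.
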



\begin{lemma}\label{defsubcatunderIinv}
Let $I:\mcal{C}\rightarrow \mcal{D}$ be an interpretation functor such that for all $N\in \pinj(\mcal{C})$, $IN=0$ or $IN\in\pinj(\mcal{D})$ and if $N,M\in \pinj(\mcal{C})$, $IN,IM\neq 0$ and $IN\cong IM$ then $N\cong M$.
\begin{enumerate}
\item If $\mcal{C}'$ is a definable subcategory of $\mcal{C}$ containing $\ker I$ then $I^{-1}\langle I\mcal{C}'\rangle =\mcal{C}'$.
\item If $\mcal{D}'$ is a definable subcategory of $\langle I\mcal{C}\rangle$ then $\langle I(I^{-1}\mcal{D}')\rangle=\mcal{D}'$.
\end{enumerate}

\end{lemma}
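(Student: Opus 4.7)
Both parts are equalities of definable subcategories, and the plan for each is to check equality at the level of indecomposable pure-injectives, using that a definable subcategory is determined by the points of the Ziegler spectrum it contains. The main tools are Lemma \ref{imintfunctorpuresubmod} (which identifies $\langle I\mathcal{X}\rangle$ with the pure-submodule closure of $I\mathcal{X}$) together with the two hypotheses on $I$: preservation of the property of being indecomposable pure-injective (or becoming zero), and injectivity on the non-kernel indecomposable pure-injectives.

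For (1), the inclusion $\mathcal{C}'\subseteq I^{-1}\langle I\mathcal{C}'\rangle$ is immediate. For the reverse, I would take $L\in I^{-1}\langle I\mathcal{C}'\rangle$ and first observe that $I^{-1}$ of a definable subcategory is definable: pp-pairs over $\mathcal{D}$ pull back along the interpretation data of $I$ to pp-pairs over $\mathcal{C}$. Thus $\langle L\rangle\subseteq I^{-1}\langle I\mathcal{C}'\rangle$, reducing the task to showing every $N\in\pinj(\langle L\rangle)$ lies in $\mathcal{C}'$. If $IN=0$ then $N\in\ker I\subseteq\mathcal{C}'$. Otherwise $IN\in\pinj(\mathcal{D})$ by the first hypothesis, and Lemma \ref{imintfunctorpuresubmod} puts $IN$ as a pure submodule, hence a direct summand by pure-injectivity of $IN$, of some $IM$ with $M\in\mathcal{C}'$.

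The crux is then to exhibit $N^{\ast}\in\pinj(\mathcal{C}')$ with $IN^{\ast}\cong IN$; the second hypothesis then forces $N\cong N^{\ast}$, so $N\in\mathcal{C}'$. To produce $N^{\ast}$, I would replace $M$ by its pure-injective hull $H(M)\in\mathcal{C}'$ and invoke the Ziegler decomposition $H(M)=H_d\oplus H_c$, with $H_d$ the pure-injective hull of a direct sum of indecomposables $N_i\in\pinj(\mathcal{C}')$ and $H_c$ superdecomposable. Since $I$ commutes with direct products and direct limits, $IN$ is an indecomposable pure-injective summand of $IH_d\oplus IH_c$, hence of one of the two summands. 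The case of $IH_c$ should be ruled out: any indecomposable pure-injective summand of $IH_c$ would, via the uniqueness hypothesis and the pure-submodule characterization of $\langle I\{H_c\}\rangle$, have to arise as $IK$ for some $K\in\pinj(\mathcal{C})$ sitting pp-definably inside $H_c$, contradicting the superdecomposability of $H_c$. Thus $IN$ matches one of the $IN_i$, and we set $N^{\ast}=N_i$.

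For (2), $\langle II^{-1}\mathcal{D}'\rangle\subseteq\mathcal{D}'$ follows from $II^{-1}\mathcal{D}'\subseteq\mathcal{D}'$ and definability of $\mathcal{D}'$. For the reverse, given $Y\in\pinj(\mathcal{D}')$, the assumption $\mathcal{D}'\subseteq\langle I\mathcal{C}\rangle$ and Lemma \ref{imintfunctorpuresubmod} realize $Y$ as a direct summand of $IL$ for some $L\in\mathcal{C}$; the same Ziegler-decomposition argument, now applied to $H(L)\in\mathcal{C}$, produces $N\in\pinj(\mathcal{C})$ with $IN\cong Y$, whence $N\in I^{-1}\mathcal{D}'$ and $Y\cong IN\in II^{-1}\mathcal{D}'\subseteq\langle II^{-1}\mathcal{D}'\rangle$. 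I expect the main obstacle in both parts to be making the pull-back of indecomposable summands through $I$ precise: $I$ need not be full and idempotents in $\mathrm{End}(IH(M))$ need not lift, so the argument that an indecomposable summand of $IH(M)$ genuinely corresponds to one of $H(M)$ has to proceed via pp-formulas and the two hypotheses on $I$, rather than by any naive categorical manipulation of the decomposition $H(M)=H_d\oplus H_c$.
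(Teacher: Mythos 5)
Your overall skeleton is the same as the paper's: reduce both equalities to a check on indecomposable pure-injectives, use Lemma \ref{imintfunctorpuresubmod} to realise a point of $\langle I\mcal{C}'\rangle$ (resp.\ $\mcal{D}'$) as a direct summand of $IL$ with $L\in\mcal{C}'$ (resp.\ $L\in\mcal{C}$), dispose of the kernel case, and then pull the summand back through $I$. The difference is at exactly the step you call the crux. The paper does not argue this from scratch: it invokes \cite[18.2.24]{PSL}, which says precisely that an indecomposable pure-injective direct summand of $IL$ is already a direct summand of $IL'$ for some indecomposable pure-injective $L'$ in the definable subcategory generated by $L$; the two hypotheses on $I$ then give $IN\cong IL'$ and $N\cong L'$. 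Your replacement argument via the decomposition $H(M)=H\bigl(\bigoplus_i N_i\bigr)\oplus H_c$ has two genuine gaps. First, in the discrete case you conclude that $IN$ ``matches one of the $IN_i$'', but this needs the indecomposable summands of $IH\bigl(\bigoplus_i N_i\bigr)$ to be exactly the $IN_i$; since $I$ is not known to preserve pure-injective hulls (for Maranda's functor this is Theorem \ref{prespihulls}, a separate nontrivial result, and it is not available for a general $I$ satisfying only the stated hypotheses), $IH\bigl(\bigoplus_i N_i\bigr)$ contains $H\bigl(\bigoplus_i IN_i\bigr)$ as a summand but may have a complement contributing new indecomposable summands, and $IN$ could sit there.

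Second, the elimination of the $IH_c$ case is both circular and based on a false inference. To say that an indecomposable summand of $IH_c$ ``has to arise as $IK$ for some $K\in\pinj(\mcal{C})$'' is exactly the pull-back statement you are trying to prove (Lemma \ref{imintfunctorpuresubmod} only returns you to ``pure submodule of $IL$ for some $L\in\langle H_c\rangle$''); and even granted such a $K$, there is no contradiction with superdecomposability: $\langle H_c\rangle$ always contains indecomposable pure-injectives when $H_c\neq 0$, and an indecomposable pure-injective lying pp-definably in, or in the definable subcategory generated by, $H_c$ is not a direct summand of $H_c$, which is all that superdecomposability forbids. Indeed there is no need to rule this case out at all if one has the cited result, since it applies to $\langle H_c\rangle\subseteq\mcal{C}'$ as well. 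So as written the proposal does not close the key step; it would be repaired by citing \cite[18.2.24]{PSL} (or proving an equivalent statement), after which your argument collapses to the paper's. The same gap occurs in your treatment of part (2), where the paper again uses the same citation to replace $L$ by an indecomposable pure-injective.
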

\begin{proof}
$(i)$ Suppose $M\in\mcal{C}'$. Then $IM\in\langle I\mcal{C}' \rangle$. So $M\in I^{-1}\langle I\mcal{C}'\rangle$.

Suppose $N\in\pinj(\mcal{C})$ and $N\in I^{-1}\langle I\mcal{C}'\rangle$. If $IN=0$ then $N\in \mcal{C}'$ since $\ker I\subseteq \mcal{C}'$. So we may assume that $IN\neq 0$ and $IN$ is a pure-subobject of $IL$ for some $L\in\mcal{C}'$ by \ref{imintfunctorpuresubmod}. Since $N$ is pure-injective, so is $IN$. Hence $IN$ is a direct summand of $IL$. By the hypotheses on $I$, $IN$ is indecomposable.  So by \cite[18.2.24]{PSL}, there exists $L'\in\pinj(\mcal{C}')$ such that $IN$ is a direct summand of $IL'$. By the hypothesis on $I$, $IL'$ is indecomposable and hence $IN\cong IL'$. By the other hypothesis on $I$, $L'\cong N$. Thus $N\in \mcal{C}'$ as required.

Since definable subcategories are determined by the indecomposable pure-injective modules they contain, $I^{-1}\langle I\mcal{C}'\rangle\subseteq \mcal{C}'$.

$(ii)$ Suppose $\mcal{D}'$ is a definable subcategory of $\langle I\mcal{C}\rangle$.  Since $\mcal{D}'$ is a definable subcategory, $\langle I(I^{-1}\mcal{D}')\rangle\subseteq\mcal{D}'$ if and only if $ I(I^{-1}\mcal{D}')\subseteq\mcal{D}'$. Take $M\in I^{-1}\mcal{D}'$. By definition, $IM\in \mcal{D}'$. So $ I(I^{-1}\mcal{D}')\subseteq\mcal{D}'$.

We now show that $\mcal{D}'\subseteq \langle I(I^{-1}\mcal{D}') \rangle$. Suppose $N\in\pinj(\mcal{D}')$. Since $\mcal{D}'\subseteq \langle I\mcal{C}\rangle$, by \ref{imintfunctorpuresubmod}, there exists $L\in\mcal{C}$ such that $N$ is pure-subobject of $IL$. Thus $N$ is a direct summand of $IL$. By \cite[18.2.24]{PSL}, we may assume $L$ is also indecomposable pure-injective. Thus $N\cong IL$. So $L\in I^{-1}\mcal{D}'$ and $N\cong IL\in I(I^{-1}\mcal{D}')$ as required.
\end{proof}

\begin{cor}
Let $I:\mcal{C}\rightarrow \mcal{D}$ be an interpretation functor such that for all $N\in \pinj(\mcal{C})$, $IN=0$ or $IN\in\pinj(\mcal{D})$ and if $N,M\in \pinj(\mcal{C})$, $IN,IM\neq 0$ and $IN\cong IM$ then $N\cong M$. The maps
\[\ker I\subseteq\mcal{C}'\subseteq \mcal{C}\mapsto \langle I\mcal{C}'\rangle\]
and
\[\mcal{D}'\subseteq \langle I\mcal{C}\rangle\mapsto I^{-1}\mcal{D}'\]
give a inclusion preserving bijective correspondence between definable subcategories in $\langle I\mcal{C}\rangle$ and definable subcategories of $\mcal{C}$ containing $\ker I$.
\end{cor}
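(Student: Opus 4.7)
The plan is to invoke the preceding lemma directly, after checking that both maps in the statement are well-defined on the specified domains. For the map $\mcal{C}'\mapsto \langle I\mcal{C}'\rangle$, the output is by construction a definable subcategory, and it is contained in $\langle I\mcal{C}\rangle$ simply because $\mcal{C}'\subseteq \mcal{C}$. For the map $\mcal{D}'\mapsto I^{-1}\mcal{D}'$, one needs to know first that $I^{-1}\mcal{D}'$ is a definable subcategory of $\mcal{C}$; this follows from the fact that $I$ is an interpretation functor, so the defining pp-pairs of $\mcal{D}'$ pull back under $I$ to pp-pairs whose vanishing on $M\in\mcal{C}$ cuts out $I^{-1}\mcal{D}'$ (equivalently, one can invoke \ref{defsubcatdef}(ii) together with the fact that $I$ commutes with products and direct limits and preserves pure submodules in the appropriate sense). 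Moreover $\ker I\subseteq I^{-1}\mcal{D}'$ trivially, since the zero module lies in every definable subcategory.

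Once well-definedness is in place, the two compositions are precisely what \ref{defsubcatunderIinv} computes: part (i) of the lemma gives $I^{-1}\langle I\mcal{C}'\rangle=\mcal{C}'$ for every definable subcategory $\mcal{C}'$ containing $\ker I$, and part (ii) gives $\langle I(I^{-1}\mcal{D}')\rangle=\mcal{D}'$ for every definable subcategory $\mcal{D}'\subseteq \langle I\mcal{C}\rangle$. Thus the two maps are mutually inverse.

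Finally, inclusion preservation is automatic from the definitions: if $\mcal{C}'_1\subseteq \mcal{C}'_2$ then $I\mcal{C}'_1\subseteq I\mcal{C}'_2$ and hence $\langle I\mcal{C}'_1\rangle\subseteq \langle I\mcal{C}'_2\rangle$, and if $\mcal{D}'_1\subseteq \mcal{D}'_2$ then an object $L\in \mcal{C}$ with $IL\in\mcal{D}'_1$ certainly has $IL\in\mcal{D}'_2$. There is no real obstacle here; the substance of the statement is entirely absorbed into \ref{defsubcatunderIinv}, and the only genuine checking beyond that lemma is the well-definedness of $I^{-1}$ on definable subcategories, which is standard for interpretation functors.
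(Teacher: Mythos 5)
Your proposal is correct and follows essentially the same route as the paper: both compositions are exactly the two parts of Lemma \ref{defsubcatunderIinv}, and inclusion preservation is immediate from the definitions (the paper handles well-definedness of $I^{-1}\mcal{D}'$ by fiat in the definition preceding Lemma \ref{imintfunctorpuresubmod}, which your pp-pair/closure-properties check simply makes explicit).
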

\begin{proof}
We have shown that if $\mcal{C}'$ is a definable subcategory of $\mcal{C}$ containing $\ker I$ then $I^{-1}\langle I\mcal{C}'\rangle=\mcal{C}'$ and if $\mcal{D}'$ is a definable subcategory of $\langle I\mcal{C}'\rangle$ then $\langle I(I^{-1}\mcal{D}')\rangle=\mcal{D}'$.

That this correspondence is inclusion preserving follows directly from its definition.
\end{proof}

The following is very close to \cite[18.2.26]{PSL}, \cite[3.19]{Interpretingmodules} and \cite[7.8]{Exdefcat} but our hypotheses are slightly different. This statement will be needed in section \ref{S-duality}.

\begin{proposition}\label{superintZg}
Let $I:\mcal{C}\rightarrow \mcal{D}$ be an interpretation functor such that for all $N\in \pinj(\mcal{C})$, $IN=0$ or $IN\in\pinj(\mcal{D})$ and if $N,M\in \pinj(\mcal{C})$, $IN,IM\neq 0$ and $IN\cong IM$ then $N\cong M$. The assignment $N\mapsto IN$ induces a homeomorphism between $\Zg(\mcal{C})\backslash \ker I$ and its image in $\Zg(\mcal{D})$ which is closed.
\end{proposition}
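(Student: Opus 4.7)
The plan is to show that $\iota\colon N\mapsto IN$ is an injection from $\Zg(\mcal{C})\setminus \ker I$ to $\Zg(\mcal{D})$ with closed image in $\Zg(\mcal{D})$, and that $\iota$ is both continuous and closed onto that image, so that it is a homeomorphism onto its image.

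Injectivity is immediate from the second hypothesis on $I$. I would first identify the image as $\pinj(\langle I\mcal{C}\rangle) = \langle I\mcal{C}\rangle\cap \Zg(\mcal{D})$, which is closed in $\Zg(\mcal{D})$ because $\langle I\mcal{C}\rangle$ is a definable subcategory by \ref{imintfunctorpuresubmod}. The inclusion $\sseq$ is immediate from the first hypothesis. For the reverse, given $N'\in\pinj(\langle I\mcal{C}\rangle)$, \ref{imintfunctorpuresubmod} writes $N'$ as a pure-subobject, and hence (being pure-injective) a direct summand, of $IL$ for some $L\in\mcal{C}$; by \cite[18.2.24]{PSL} one may take $L\in\pinj(\mcal{C})$, and then $IL$ is indecomposable by hypothesis, so $N'\cong IL$. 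Running the same argument with an arbitrary definable subcategory $\mcal{C}'\sseq \mcal{C}$ in place of $\mcal{C}$ yields $\iota(\pinj(\mcal{C}')\setminus \ker I) = \pinj(\langle I\mcal{C}'\rangle)$, which is closed in $\Zg(\mcal{D})$. Since every closed subset of $\Zg(\mcal{C})\setminus \ker I$ has the form $\pinj(\mcal{C}')\setminus \ker I$ for some definable $\mcal{C}'\sseq \mcal{C}$, this shows $\iota$ is a closed map onto its image.

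For continuity, I would invoke the standard pullback of pp-pairs along an interpretation functor: given defining data $(\phi/\psi;(\rho_t)_{t\in T})$ for $I$ and a pp-pair $\sigma/\tau$ over $T$, one constructs a pp-pair $\sigma^*/\tau^*$ over $S$ (by substituting $\rho_t$ for each occurrence of $t\in T$ inside $\sigma,\tau$ and relativising to $\phi/\psi$) such that, for $N\in\mcal{C}$, $\sigma(IN)\supsetneq \tau(IN)$ if and only if $\sigma^*(N)\supsetneq \tau^*(N)$. Hence $\iota^{-1}((\sigma/\tau)) = (\sigma^*/\tau^*)\cap (\Zg(\mcal{C})\setminus \ker I)$ is open in the subspace $\Zg(\mcal{C})\setminus \ker I$, and $\iota$ is continuous.

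The main obstacle is the pp-pair pullback step underlying continuity; it is standard but combinatorially tedious to spell out. A slicker alternative is to check directly that $\iota$ preserves topological closures: the closure of $X\sseq \Zg(\mcal{C})\setminus \ker I$ in this subspace is $\pinj(\langle X\rangle)\setminus \ker I$, and the closure of $\iota(X)\sseq \pinj(\langle I\mcal{C}\rangle)$ in the image is $\pinj(\langle IX\rangle)$, so $\iota(\overline{X}) = \overline{\iota(X)}$ reduces via the image computation above to the identity $\langle I(\langle X\rangle)\rangle = \langle IX\rangle$, which in turn follows from the fact that interpretation functors preserve pure-subobjects as well as direct products and direct limits. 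A bijection preserving closures is automatically a homeomorphism onto its image.
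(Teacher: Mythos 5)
Your proposal is correct, and its overall skeleton matches the paper's: injectivity from the hypothesis on $I$, and the identification of the image with $\pinj(\langle I\mcal{C}\rangle)=\langle I\mcal{C}\rangle\cap\Zg(\mcal{D})$ via \ref{imintfunctorpuresubmod}, passage to a direct summand, \cite[18.2.24]{PSL}, and indecomposability of $IL$. Where you diverge is in how the two directions of the homeomorphism are established. The paper routes both through Lemma \ref{defsubcatunderIinv} together with the (asserted) definability of $I^{-1}\mcal{D}'$: continuity by pulling a closed set $X\subseteq\Zg(\mcal{D})$ back to $I^{-1}\langle X\rangle\cap\Zg(\mcal{C})$, and closedness by sending a closed $Y\subseteq\Zg(\mcal{C})$ (first enlarged by $\ker I\cap\Zg(\mcal{C})$) to $\langle I\langle Y\rangle\rangle\cap\Zg(\mcal{D})$, with \ref{defsubcatunderIinv} giving the equivalence ``$N\in Y$ iff $IN\in X$''. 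You instead prove closedness by computing directly that $N\mapsto IN$ sends $\pinj(\mcal{C}')\setminus\ker I$ onto $\pinj(\langle I\mcal{C}'\rangle)$ for every definable $\mcal{C}'\subseteq\mcal{C}$ (in effect re-running the argument of \ref{defsubcatunderIinv}(ii) in that generality, which also yields the slightly stronger fact that images of closed sets are closed in all of $\Zg(\mcal{D})$), and you obtain continuity either from the pp-pair pullback along an interpretation functor or from the closure-preservation argument. Both routes are legitimate: the pp-translation you invoke without proof is exactly the standard fact underlying the paper's unproved claim that $I^{-1}\mcal{D}'$ is definable, so you are on the same footing as the paper there; the closure-preservation alternative is more self-contained, and its one delicate point, $\langle I\langle X\rangle\rangle=\langle IX\rangle$, is correctly reduced to $I$ preserving products, direct limits and pure subobjects (equivalently, to $\langle X\rangle\subseteq I^{-1}\langle IX\rangle$). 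What the paper's version buys is brevity, since Lemma \ref{defsubcatunderIinv} is already available; what yours buys is independence from that lemma and a cleaner description of images of closed sets.
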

\begin{proof}
Suppose $L\in \langle I\mcal{C}\rangle\cap\Zg(\mcal{D})$. Then $L$ is a pure-subobject of some $IN$ for some $N\in\Zg(\mcal{C})$. By hypothesis on $I$, $IN$ is indecomposable. So $L\cong IN$. Thus the closed set $\langle I\mcal{C}\rangle\cap\Zg(\mcal{D})$ is the image of $\Zg(\mcal{C})\backslash \ker I$ under $I$.

Suppose $X$ is a closed subset of $\Zg(\mcal{D})$ contained in $I\Zg(\mcal{C})$. Let $\mcal{X}$ be the definable subcategory of $\mcal{D}$ generated by $X$. Let $\mcal{Y}:=I^{-1}\mcal{X}$ and $Y:=\mcal{Y}\cap \Zg(\mcal{C})$. Since $\mcal{X}\subseteq \langle I\mcal{C}\rangle$, $IL\in \mcal{X}$ if and only if $L\in\mcal{Y}$ by \ref{defsubcatunderIinv}. So $N\in Y$ if and only if $IN\in X$. Thus $N\mapsto IN$ is continuous.

Suppose $Y$ is a closed subset of $\Zg(\mcal{C})$. We may replace $Y$ by the closed subset $Y\cup(\ker I\cap\Zg(\mcal{C}))$ without changing its intersection with $\Zg(\mcal{C})\backslash \ker I$. Let $\mcal{Y}$ be the definable subcategory of $\mcal{C}$ generated by $Y$ and let $X=\langle I\mcal{Y}\rangle\cap \Zg(\mcal{D})$. Now $N\in \mcal{Y}$ if and only $N\in I^{-1}\langle I\mcal{Y}\rangle$ by \ref{defsubcatunderIinv}. So $N\in Y$ if and only if $IY\in X$. Thus the inverse of $N\mapsto IN$ is continuous.
\end{proof}

\section{Maranda's functor}\label{SMarfun}

Throughout this section, $R$ will be a discrete valuation domain with field of fractions $Q$ and maximal ideal generated by $\pi$, and $\Lambda$ will be an $R$-order such that $Q\Lambda$ is a separable $Q$-algebra.

The basis of Maranda's theorem is the existence of a non-negative integer $l$ such that for all $\Lambda$-lattices $L$ and $M$,
\[\pi^{l}\Ext^1(L,M)=0.\] Throughout this section, let $k_0$ be the smallest such non-negative integer. We will call this natural number \textbf{Maranda's constant} (for $\Lambda$ as an $R$-order).

Note that since $\Lambda$ is noetherian, $\Ext^1(L,-)$ is finitely presented as a functor in $(\mod\text{-}\Lambda,\Ab)$ (see \cite[10.2.35]{PSL}). Hence $\pi^{k_0}\Ext^1(L,-)$ is also finitely presented. Since $\Tf_{\Lambda}$ is the smallest definable subcategory containing $\Latt_\Lambda$, $\pi^{k_0}\Ext^1(L,N)=0$ for all $L\in\Latt_\Lambda$ and $N\in\Tf_{\Lambda}$.

Throughout this section, when $k\in\N$ is clear from the context, for $M\in\Mod\text{-}\Lambda$ and $m\in M$, we will often write $\overline{M}$ for $M/M\pi^k$ and $\overline{m}$ for $m+M\pi^k$. If $f:M\rightarrow N\in \Mod\text{-}\Lambda$ then we will write $\overline{f}$ for the induced homomorphism from $M/M\pi^k$ to $N/N\pi^k$. This is to allow us to use subscripts on modules as indices and to ease readability.  We will write $\Lambda_k$ for the ring $\Lambda/\pi^k\Lambda$.

The proof of the following lemma can easily be extracted from the proof of \cite[30.14]{CurtisReinerV1}.

\begin{lemma}\label{marandalemmalatttotf}
Let $L\in\Latt_\Lambda$ and $M\in\Tf_\Lambda$. If $k\geq k_0+1$ then for all $g\in \Hom_{\Lambda_k}(L/L\pi^k,M/M\pi^k)$ there exists $h\in \Hom_\Lambda(L,M)$ such that for all $m\in L$, $\pi^{k-k_0}+\Lambda\pi^k|\overline{h(m)}-g(\overline{m})$.
\end{lemma}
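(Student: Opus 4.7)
The plan is to lift $g$ through a finitely generated projective presentation of $L$, then correct the lift using the fact that the Ext obstruction is killed by $\pi^{k_0}$, which was established just before the lemma statement.

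First I would choose a short exact sequence $0\to K\xrightarrow{i} P\xrightarrow{p} L\to 0$ with $P$ finitely generated projective. Then $K$ is $R$-torsion-free, so $K\in\Latt_\Lambda$, and since $L$ is $R$-flat the reduction $0\to\bar K\to\bar P\xrightarrow{\bar p}\bar L\to 0$ is exact. Using projectivity of $P$ together with the surjection $M\twoheadrightarrow\bar M$, I can lift $g\circ\bar p:\bar P\to\bar M$ to a $\Lambda$-homomorphism $h':P\to M$. The composition $h'\circ i:K\to M$ reduces to $g\circ\bar p\circ\bar i=0$ modulo $\pi^k$, so its image lies in $M\pi^k$. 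Because $M$ is $R$-torsion-free, multiplication by $\pi^k$ is injective on $M$, and hence there is a unique $f\in\Hom_\Lambda(K,M)$ with $h'\circ i=f\cdot\pi^k$.

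Next, I view $f$ as representing a class $[f]\in\Ext^1_\Lambda(L,M)$ via the chosen presentation. The remark immediately preceding the lemma says that $\pi^{k_0}\Ext^1_\Lambda(L,N)=0$ for every $N\in\Tf_\Lambda$, so $\pi^{k_0}[f]=0$. Concretely this means $\pi^{k_0}f$ extends along $i$ to a $\Lambda$-map $\tilde f:P\to M$, i.e.\ $\tilde f\circ i=f\cdot\pi^{k_0}$. Define $h'':=h'-\tilde f\cdot\pi^{k-k_0}:P\to M$; on $K$ it satisfies $h''\circ i=f\cdot\pi^k-f\cdot\pi^{k_0}\cdot\pi^{k-k_0}=0$, so $h''$ factors as $h\circ p$ for a unique $h\in\Hom_\Lambda(L,M)$. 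For $m\in L$, picking $m'\in P$ with $p(m')=m$, we get
\[
\overline{h(m)}-g(\bar m)=\overline{h'(m')}-\overline{\tilde f(m')}\cdot\pi^{k-k_0}-g(\bar p(\bar{m'}))=-\overline{\tilde f(m')}\cdot\pi^{k-k_0},
\]
which lies in $\bar M\cdot\pi^{k-k_0}$, giving the desired divisibility.

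The main obstacle is the transport between $\Ext^1_\Lambda(L,M\pi^k)$ and $\Ext^1_\Lambda(L,M)$: one needs $M$ to be $R$-torsion-free so that $h'\circ i$ is genuinely $\pi^k$ times a map $f$ into $M$, because only then does the annihilator bound $\pi^{k_0}\Ext^1_\Lambda(L,M)=0$ apply. Critically, that bound is needed for arbitrary $M\in\Tf_\Lambda$ (not just for lattices), which is exactly the strengthening of Maranda's classical bound recorded in the paragraph defining $k_0$. Once this is in place the rest of the argument is bookkeeping about pulling elements up through $p$.
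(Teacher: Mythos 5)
Your argument is correct: every step checks out, and you correctly identify the two ingredients that make the lattice-to-torsion-free generalisation work, namely $R$-torsion-freeness of $M$ (to divide $h'\circ i$ by $\pi^k$) and the extension $\pi^{k_0}\Ext^1_\Lambda(L,N)=0$ for all $N\in\Tf_\Lambda$, which the paper records in the paragraph defining $k_0$ via the finitely presented functor argument.

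The route differs in its mechanics from the one the paper points to. The paper gives no written proof but refers to Curtis--Reiner (30.14), where the obstruction is handled in the \emph{second} variable: one applies $\Hom_\Lambda(L,-)$ to $0\to M\xrightarrow{\pi^k} M\to M/M\pi^k\to 0$, notes that the connecting map sends $g$ (viewed as a map $L\to M/M\pi^k$) into $\Ext^1_\Lambda(L,M)$, so $\pi^{k_0}g$ lifts to some $h_1\in\Hom_\Lambda(L,M)$; since $h_1(L)\subseteq M\pi^{k_0}$ one divides by $\pi^{k_0}$ (again using torsion-freeness) to obtain $h$ with $\pi^{k_0}\bigl(\overline{h(m)}-g(\overline{m})\bigr)=0$ in $M/M\pi^k$, which by torsion-freeness gives divisibility by $\pi^{k-k_0}$. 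You instead resolve $L$ in the \emph{first} variable by a projective $P$, realise the obstruction explicitly as $[f]\in\Hom_\Lambda(K,M)/\mathrm{im}\,\Hom_\Lambda(P,M)$, and correct the projective lift by subtracting $\tilde f\pi^{k-k_0}$ so that it factors through $p$. The two arguments are equivalent in substance and of comparable length; yours has the minor advantage of exhibiting the obstruction cocycle and the correction term concretely (the error is literally $-\overline{\tilde f(m')}\pi^{k-k_0}$), while the Curtis--Reiner version avoids choosing a presentation and needs only the long exact sequence in the second variable. Either can stand in for the citation.
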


The following proposition is key to proving both parts of our extension of Maranda's theorem.

\begin{proposition}\label{uremaranda}
Let $M,N$ be $R$-torsion-free $\Lambda$-modules with $N$ pure-injective. If $k\geq k_0+1$ then for all $g\in \Hom_{\Lambda_k}(M/M\pi^k,N/N\pi^k)$ there exists $h\in \Hom_\Lambda(M,N)$ such that for all $m\in M$, $\pi^{k-k_0}+\Lambda\pi^k|\overline{h(m)}-g(\overline{m})$.
\end{proposition}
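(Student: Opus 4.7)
The plan is to promote the lattice statement of Lemma \ref{marandalemmalatttotf} to the torsion-free setting by invoking the algebraic compactness of the pure-injective target $N$. First, I would unwind the conclusion: since $k \geq k_0 + 1$ we have $N\pi^k \subseteq N\pi^{k-k_0}$, so after choosing, once and for all, a lift $a_m \in N$ of $g(\overline m)$ for each $m \in M$, the required divisibility $\pi^{k-k_0} + \Lambda\pi^k \mid \overline{h(m)} - g(\overline m)$ becomes the more concrete condition $h(m) - a_m \in N\pi^{k-k_0}$ for every $m \in M$.

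Next, I would write $M$ as the directed union of its finitely generated $\Lambda$-submodules. Since $M$ is $R$-torsion-free, every such submodule $L$ is a $\Lambda$-lattice, and the restriction of $g$ to $L/L\pi^k$ fits the hypothesis of Lemma \ref{marandalemmalatttotf}; it therefore produces an $h_L \in \Hom_\Lambda(L, N)$ satisfying $h_L(m) - a_m \in N\pi^{k-k_0}$ for every $m \in L$. These local solutions need not agree on overlaps, which is the obstacle to gluing them into a single $h \colon M \to N$; this is precisely where pure-injectivity enters.

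To glue, I would introduce one variable $v_m$ for each $m \in M$ and consider the pp-type with parameters in $N$ consisting of: (i) the equation $\sum_j v_{m_j} \lambda_j = 0$ for every relation $\sum_j m_j \lambda_j = 0$ that holds in $M$ (these encode the would-be $\Lambda$-linearity of $m \mapsto v_m$), and (ii) the divisibility $\pi^{k-k_0} \mid v_m - a_m$ for each $m \in M$. Any finite sub-system mentions only finitely many $m_j$; letting $L \subseteq M$ be the $\Lambda$-lattice they generate, the assignment $v_m := h_L(m)$ simultaneously satisfies every relation and every divisibility in that sub-system, so the pp-type is finitely satisfiable in $N$.

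The main obstacle is thus reduced to a set-indexed realisation problem, resolved by the algebraic compactness of $N$: a pure-injective module realises every finitely-satisfiable pp-type with parameters, even one on an infinite set of variables, since each individual condition involves only finitely many variables (the standard strengthening of \cite[4.3.11]{PSL} obtained by viewing the infinite-variable family as a filtered intersection of cosets of pp-definable subgroups). Taking $h(m) := v_m$ at such a realisation, the equations in (i) force $h$ to be a $\Lambda$-homomorphism $M \to N$ and the conditions in (ii) deliver the required approximation.
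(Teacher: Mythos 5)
Your proposal is correct and follows essentially the same route as the paper: write $M$ as a directed union/limit of $\Lambda$-lattices, apply Lemma \ref{marandalemmalatttotf} to each finitely generated piece to get local approximants, and glue them using the algebraic compactness of the pure-injective $N$ applied to an infinite but finitely solvable system of linear equations and cosets of pp-definable subgroups. The only divergence is bookkeeping: the paper's system uses one tuple of variables per lattice $L_i$ (constrained by a pp generator of the pp-type of its generating tuple together with compatibility equations along the directed system), whereas you use one variable per element of $M$ with all $\Lambda$-linear relations imposed as equations — an equivalent encoding that realizes the same compactness argument.
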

\begin{proof}

Since $M\in \Tf_\Lambda$, there exists a directed system of $\Lambda$-lattices $L_i$ for $i\in I$ and $\sigma_{ij}:L_i\rightarrow L_j$ for $i\leq j\in I$ such that $M$ is the direct limit of this directed system. Let $f_i:L_i\rightarrow M$ be the component maps.

Our aim is to find $h_i:L_i\rightarrow N$ for all $i\in I$ such that $h_i=h_j\sigma_{ij}$ and for all $a\in L_i$, $\pi^{k-k_0}+\Lambda\pi^k|\overline{h_i(a)}-g(\overline{f_i(a)})$.

If we can do this then there exists $h:M\rightarrow N$ such that $h_i=hf_i$ for all $i\in I$. This homomorphism is then as required by the statement of the proposition for the following reasons. For all $m\in M$, there exist $i\in I$ and $a\in L_i$ such that $f_i(a)=m$. So \[\overline{h(m)}-g(\overline{m})=\overline{hf_i(a)}-g(\overline{f_i(a)})=\overline{h_i(a)}-g(\overline{f_i(a)})\] is divisible by $\pi^{k-k_0}+\Lambda\pi^k$.

For each $i\in I$, let $\epsilon_i:L_i\rightarrow N$ be such that for all $a\in L_i$, $\pi^{k-k_0}+\Lambda\pi^k$ divides $\overline{\epsilon_i(a)}-g(\overline{f_i(a)})$. Such an $\epsilon_i$ exists by \ref{marandalemmalatttotf} since $L_i$ is a $\Lambda$-lattice.

Let $\mathbf{c_i}:=(c_{i1},\ldots, c_{il_i})$ generate $L_i$ as an $R$-module and $\phi_i$ generate the pp-type of $\mathbf{c_i}$. Note that $\mathbf{m}\in\phi_i(N)$ if and only if there exists a $q:L_i\rightarrow N$ such that $q(\mathbf{c_i})=\mathbf{m}$.

Let \[\chi_i(x_1,\ldots, x_{l_i}):=\phi_i(x_1,\ldots,x_{l_i})\wedge\bigwedge_{j=1}^{l_i}\pi^{k-k_0}|x_j.\]

We now show that $\mathbf{m}-\epsilon_i(\mathbf{c_i})\in\chi_i(N)$ if and only if there exists a homomorphism $q\in \Hom(L_i,N)$ such that $q(\mathbf{c_i})=\mathbf{m}$ and for all $a\in L_i$, $\pi^{k-k_0}+\Lambda\pi^k$ divides $\overline{q(a)}-g(\overline{f_i(a)})$.

Suppose $\mathbf{m}-\epsilon_i(\mathbf{c}_i)\in\chi_i(N)$. Since $\epsilon_i(\mathbf{c_i})\in\phi_i(N)$, $\mathbf{m}\in\phi_i(N)$ and hence there exists $q\in\Hom(L_i,N)$ such that $q(\mathbf{c_i})=\mathbf{m}$. For each $1\leq j\leq l_i$, $\pi^{k-k_0}$ divides $q(c_{ij})-\epsilon(c_{ij})=m_j-\epsilon_i(c_{ij})$. By definition of $\epsilon_i$, $\pi^{k-k_0}+\Lambda\pi^k$ divides $\overline{\epsilon_i(c_{ij})}-g(\overline{f_i(c_{ij})})$. So $\pi^{k-k_0}+\Lambda\pi^k$ divides $q(c_{ij})-g(\overline{f_i(c_{ij})})$ for $1\leq j\leq l_i$. Since $\mathbf{c_i}$ generates $L_i$, $\pi^{k-k_0}+\Lambda\pi^k$ divides $\overline{q(a)}-g(\overline{f_i(a)})$ for all $a\in L_i$.

Now suppose that $q\in\Hom(L_i,N)$ is such that $q(\mathbf{c_i})=\mathbf{m}$ and that for all $a\in L_i$, $\pi^{k-k_0}+\Lambda\pi^k$ divides $\overline{q(a)}-g(\overline{f_i(a)})$. Then $\mathbf{m}-\epsilon_i(\mathbf{c_i})=(q-\epsilon_i)(\mathbf{c_i})\in\phi_i(N)$. By definition of $\epsilon_i$, $\pi^{k-k_0}+\Lambda\pi^k$ divides $\overline{\epsilon_i(a)}-g(\overline{f_i(a)})$ for all $a\in L_i$. So $\pi^{k-k_0}+\Lambda\pi^k$ divides $\overline{q(a)}-\overline{\epsilon_i(a)}$ for all $a\in L_i$. Since $k\geq k-k_0$, $\pi^{k-k_0}$ divides $q(a)-\epsilon_i(a)$ for all $a\in L_i$. So, in particular, $\pi^{k-k_0}$ divides $q(c_{ij})-\epsilon_i(c_{ij})=m_j-\epsilon_i(c_{ij})$ for all $1\leq j\leq l_i$. Thus $\mathbf{m}-\epsilon_i(c_i)\in\chi_i(N)$ as required.

For $i\leq j\in I$, let $\mathbf{t_{ij}}\in R^{l_j\times l_i}$ be such that $\sigma_{ij}(\mathbf{c_i})=\mathbf{c_j}\cdot\mathbf{t_{ij}}$.

Consider the system of linear equations and cosets of pp-definable subsets
\[\mathbf{x_i}\in \epsilon_i(\mathbf{c_i})+\chi_{i}(N)\tag*{$(1)_i$}\] for $i\in I$ and
\[\mathbf{x_i}=\mathbf{x_j}\cdot\mathbf{t_{ij}}\tag*{$(2)_{ij}$}\] for $i\leq j\in I$.

Let $I_0\subseteq I$ be a finite subset of $I$. Since $I$ is directed, by adding an element to $I_0$ if necessary, we may assume that there is a $p\in I_0$ such that $i\leq p$ for all $i\in I_0$.

Let $\mathbf{m_p}=\epsilon_p(\mathbf{c_p})$ and for $i\in I_0$, let $\mathbf{m_i}=\mathbf{m_p}\cdot \mathbf{t_{ip}}$.  Then \[\mathbf{m_i}=\epsilon_p(\mathbf{c_p})\cdot \mathbf{t_{ip}}=\epsilon_p(\mathbf{c_p}\cdot \mathbf{t_{ip}})=\epsilon_p(\sigma_{ip}(\mathbf{c_i}))\] for all $i\in I_0$.

Suppose that $i\leq j\in I_0$. Then $\sigma_{ip}=\sigma_{jp}\circ\sigma_{ij}$. So \[\mathbf{m_i}=\epsilon_p(\sigma_{jp}\circ\sigma_{ij}(\mathbf{c_i}))=\epsilon_p(\sigma_{jp}(\mathbf{c_j}\cdot \mathbf{t_{ij}}))=\epsilon_p(\sigma_{jp}(\mathbf{c_j}))\cdot\mathbf{t_{ij}}=\mathbf{m_j}\cdot\mathbf{t_{ij}}.\] Thus $(\mathbf{m_i})_{i\in I_0}$ satisfies $(2)_{ij}$ for all $i\leq j\in I_0$.

We now need to show that for all $i\in I_0$, $\mathbf{m_i}-\epsilon_i(\mathbf{c_i})\in \chi_i(N)$. Since $\mathbf{m_i}=\epsilon_p\circ\sigma_{ip}(\mathbf{c_i})$, $\mathbf{m_i}\in\phi_i(N)$. So, since $\epsilon_i(\mathbf{c_i})\in\phi_i(N)$, $\mathbf{m_i}-\epsilon_i(\mathbf{c_i})\in\phi_i(N)$.


By definition of $\epsilon_p$ and $\epsilon_i$, $\pi^{k-k_0}+\Lambda\pi^k$ divides $\overline{\epsilon_p(\sigma_{ip}(a))}-g(\overline{f_p(\sigma_{ip}(a))})$ and $\pi^{k-k_0}+\Lambda\pi^k$ divides $\overline{\epsilon_i(a)}-g(\overline{f_i(a)})$ for all $a\in L_i$. So, since $f_p(\sigma_{ip}(a))=f_i(a)$, $\pi^{k-k_0}+\Lambda\pi^k$ divides
$\overline{\epsilon_p(\sigma_{ip}(a))}-\overline{\epsilon_i(a)}$ for all $a\in L_i$. Thus, using the characterisation of $\chi_i$ proved earlier, $\mathbf{m_i}-\epsilon_i(\mathbf{c}_i)=\epsilon_p\circ\sigma_{ip}(\mathbf{c_i})-\epsilon_i(\mathbf{c}_i) \in \chi_i(N)$.


Since the system of equations $(1)_i, (2)_{ij}$ is finitely solvable and $N$ is pure-injective, there exists $(\mathbf{m_i})_{i\in I}$ with $\mathbf{m_i}\in N$ satisfying $(1)_{i}$ and $(2)_{ij}$ for all $i\leq j\in I$. For each $i\in I$, let $h_i:L_i\rightarrow N$ be the homomorphism which sends $\mathbf{c_i}$ to $\mathbf{m_i}$. Condition $(2)_{ij}$ ensures that for all $i\leq j\in I$, $h_i=h_j\circ\sigma_{ij}$. This is because $h_j(\sigma_{ij}(\mathbf{c_i}))=h_j(\mathbf{c_j}\cdot \mathbf{t_{ij}})=h_j(\mathbf{c_j})\cdot \mathbf{t_{ij}}=\mathbf{m_j}\cdot \mathbf{t_{ij}}=\mathbf{m_i}$. Condition $(1)_i$ ensures that $\pi^{k-k_0}+\Lambda\pi^k$ divides $\overline{h_i(a)}-g(\overline{f_i(a)})$ for all $a\in L_i$. \end{proof}

\begin{lemma}\label{isopluspidivisiblemapisiso}
Let $N\in \Mod\text{-}\Lambda_k$ and $g,\sigma\in \End N$. Suppose that for all $m\in N$, $\pi+\Lambda\pi^k|\sigma(m)$. Then $g-\sigma$ is an isomorphism if and only if $g$ is an isomorphism.
\end{lemma}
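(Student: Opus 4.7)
The plan is to reduce this to the observation that $\sigma$ lies in the Jacobson radical of $\End N$, via a straightforward nilpotence argument. First, I would unpack the divisibility hypothesis: the statement ``$\pi+\Lambda\pi^k\mid \sigma(m)$'' in the $\Lambda_k$-module $N$ means exactly that $\sigma(N)\subseteq N\pi$. Since $\sigma$ is a right $\Lambda$-module homomorphism, I would prove by induction on $i$ that $\sigma^i(N)\subseteq N\pi^i$: indeed
\[ \sigma^{i+1}(N)=\sigma(\sigma^i(N))\subseteq \sigma(N\pi^i)=\sigma(N)\pi^i\subseteq N\pi^{i+1}.\]
Because $\pi^k=0$ in $\Lambda_k$, this gives $\sigma^k=0$.

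Next, the same argument works for any left multiple of $\sigma$: if $h\in\End N$ then $(h\sigma)(N)\subseteq h(N)\pi\subseteq N\pi$, so $(h\sigma)^k(N)\subseteq N\pi^k=0$, i.e.\ $h\sigma$ is nilpotent. Hence $1-h\sigma$ is invertible in $\End N$ with explicit inverse $\sum_{i=0}^{k-1}(h\sigma)^i$. This shows that $\sigma$ belongs to the Jacobson radical of $\End N$.

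From here the equivalence is immediate in both directions. If $g$ is an isomorphism, then $g-\sigma=g(1-g^{-1}\sigma)$, and the second factor is invertible by the preceding paragraph applied to $h=g^{-1}$; hence $g-\sigma$ is an isomorphism. Conversely, if $g-\sigma$ is an isomorphism, then $g=(g-\sigma)+\sigma=(g-\sigma)\bigl(1+(g-\sigma)^{-1}\sigma\bigr)$, and the second factor is again invertible, so $g$ is an isomorphism.

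There is no real obstacle here: once one notices that the assumption $\sigma(N)\subseteq N\pi$ combined with $\pi^k=0$ in $\Lambda_k$ forces $\sigma$ (and every one-sided multiple of it) to be nilpotent with a uniform bound, the rest is the standard fact that adding a radical element to an endomorphism does not change invertibility. The only point requiring care is keeping track of sides (ensuring one works with right $\Lambda$-modules and uses $\sigma(N\pi^i)=\sigma(N)\pi^i$), which is automatic from $\sigma$ being a $\Lambda$-module homomorphism.
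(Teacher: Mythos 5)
Your proof is correct and follows essentially the same route as the paper's: both rest on the observation that composing $\sigma$ with any endomorphism yields a map sending $N$ into $N\pi$, hence nilpotent of index at most $k$ since $\pi^k=0$ in $\Lambda_k$, after which $1-(\text{nilpotent})$ is inverted by the geometric series. The only cosmetic differences are that you package this as $\sigma$ lying in the Jacobson radical of $\End N$ and spell out the converse direction explicitly, whereas the paper works with $1-\sigma g^{-1}$ and leaves the converse to symmetry.
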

\begin{proof}
Suppose that $g$ is an isomorphism. Then $(g-\sigma)g^{-1}=\text{Id}_N-\sigma g^{-1}$. Let $h:=\sigma g^{-1}$ and $f:=\text{Id}_N+h+\ldots h^{k-1}$. Since $\pi+\Lambda\pi^k|\sigma(m)$ for all $m\in N$, $h^k=0$. Thus $(\text{Id}_N-h)\circ f=f\circ(\text{Id}_N-h)=\text{Id}_N$. So $(g-\sigma)g^{-1}f=\text{Id}_N$ and $g^{-1}f(g-\sigma)=g^{-1}f(g-\sigma)g^{-1}g=\text{Id}_N$ Therefore $g-\sigma$ is an isomorphism.
\end{proof}

\begin{theorem}\label{marandapresisotype}
Let $M,N\in\Tf_\Lambda$ be $R$-reduced and pure-injective. If $M/M\pi^k\cong N/N\pi^k$ for some $k\geq k_0+1$ then $M\cong N$.
\end{theorem}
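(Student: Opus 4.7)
Given an isomorphism $g\colon M/M\pi^k\to N/N\pi^k$, I would first apply Proposition \ref{uremaranda} to produce a lift $h\colon M\to N$ such that $\overline{h(m)}-g(\overline{m})$ is divisible by $\pi^{k-k_0}+\Lambda\pi^k$ for every $m\in M$. Since $k\geq k_0+1$, this gives $\overline{h}=g+\sigma$ where $\sigma\colon\overline{M}\to\overline{N}$ has image contained in $\pi\overline{N}$. Composing with the (honest) inverse of $g$ at the level of quotients yields $g^{-1}\overline{h}=\text{Id}_{\overline{M}}+g^{-1}\sigma$ with $g^{-1}\sigma$ sending $\overline{M}$ into $\pi\overline{M}$. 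Lemma \ref{isopluspidivisiblemapisiso} (taking the ``$g$'' there to be $\text{Id}_{\overline{M}}$) then says $g^{-1}\overline{h}$ is an isomorphism, and hence so is $\overline{h}$.

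Next I would show that $h$ itself is an isomorphism. For injectivity: if $h(m)=0$ then $\overline{h}(\overline{m})=0$, so $\overline{m}=0$ and thus $m=m_1\pi^k$ for some $m_1\in M$. Then $h(m_1)\pi^k=0$, and since $N$ is $R$-torsion-free this forces $h(m_1)=0$, so $m_1\in M\pi^k$. Iterating shows $m\in \bigcap_{j\geq 0}M\pi^{jk}$, which is $0$ because $M$ is $R$-reduced.

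For surjectivity, let $n\in N$. Using that $\overline{h}$ is surjective, I would inductively construct $m_0,m_1,\ldots\in M$ and $n_1,n_2,\ldots\in N$ as follows: setting $n_0:=n$, pick $m_i\in M$ with $h(m_i)\equiv n_i\pmod{N\pi^k}$ and define $n_{i+1}\in N$ by $n_i-h(m_i)=n_{i+1}\pi^k$. The partial sums $s_j:=\sum_{i=0}^{j}m_i\pi^{ik}$ then satisfy $n-h(s_j)=n_{j+1}\pi^{(j+1)k}$ and $s_{j'}-s_j\in M\pi^{(j+1)k}$ for $j'\geq j$. Each $M\pi^{(j+1)k}$ is the solution set in $M$ of the pp-formula $\pi^{(j+1)k}\mid x$, and the system of cosets $\{s_j+M\pi^{(j+1)k}\}_{j\in\N}$ is finitely satisfiable (take $s_{j_{\max}}$ as a witness). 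Algebraic compactness of the pure-injective module $M$ now produces $m\in M$ with $m\equiv s_j\pmod{M\pi^{(j+1)k}}$ for every $j$. Writing $m-s_j=m'\pi^{(j+1)k}$ gives $h(m)-h(s_j)\in N\pi^{(j+1)k}$, hence $n-h(m)\in N\pi^{(j+1)k}$ for all $j$, and $R$-reducedness of $N$ forces $n=h(m)$.

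The main obstacle is the surjectivity step: the classical proof of Maranda's theorem invokes Nakayama on a finitely generated cokernel, which is unavailable in our setting. The correct replacement is algebraic compactness of $M$, applied to the family of cosets defined by the Cauchy-like sequence $(s_j)$; this is the only place where pure-injectivity of $M$ and $R$-reducedness of $N$ are essentially used, and it is what makes the extension of Maranda's theorem to this class of pure-injectives go through.
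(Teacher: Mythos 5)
Your proof is correct and follows essentially the same route as the paper: lift the isomorphism via Proposition \ref{uremaranda}, use Lemma \ref{isopluspidivisiblemapisiso} to see the induced map modulo $\pi^k$ is an isomorphism, and then deduce injectivity from $R$-torsion-freeness and reducedness and surjectivity from a Cauchy-type sequence together with algebraic compactness of $M$ and reducedness of $N$. The only (cosmetic) difference is that you lift only $g$ and compose with $g^{-1}$ on the quotients, whereas the paper lifts both $g$ and its inverse and checks that both composites are isomorphisms.
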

\begin{proof}
We first show that if $f:M\rightarrow N$ is such that $\overline{f}:\overline{M}\rightarrow \overline{N}$ is an isomorphism, then $f$ is an isomorphism.

Suppose $\overline{f}$ is an isomorphism and $f(m)=0$. If $m\neq 0$ then there exists $n\in M$ such that $m=n\pi^l$ and $\pi$ does not divide $n$ since $M$ is reduced. Since $N$ is $R$-torsion-free $f(m)=f(n)\pi^l=0$ implies $f(n)=0$. So $\overline{f}(\overline{n})=0$. Therefore $\overline{n}=0$. This implies $\pi$ divides $n$ contradicting our assumption. So $m=0$. Therefore $f$ is injective.

We now show that $f$ is surjective. Since $\overline{f}$ is surjective, for all $n\in N$ there exists $m\in M$ such that $n-f(m)\in N\pi^k$. Suppose $m_l$ is such that $n-f(m_l)\in N\pi^{lk}$. Let $a\pi^{lk}=n-f(m_l)$. There exists $b\in M$ such that $a-f(b)\in N\pi^{k}$. Thus $a\pi^{lk}-f(b)\pi^{lk}\in N\pi^{(l+1)k}$. So $n-f(b\pi^{lk}+m_l) \in N\pi^{(l+1)k}$ and $(b\pi^{lk}+m_l)-m_l\in N\pi^{lk}$. So there exists a sequence $(m_l)_{l\in\N}$ in $M$ such that for all $l\in \N$, $n-f(m_l)\in N\pi^{lk}$ and $m_{l+1}-m_l\in M\pi^{lk}$. Since $M$ is pure-injective, there exist an $m\in M$ such that $m-m_l\in M\pi^{kl}$ for all $l\in \N$. Thus $f(m)-n=f(m-m_l)-(n-f(m_l))\in N\pi^{kl}$ for all $l\in\N$. Since $N$ is reduced, $f(m)=n$.

Suppose that $g:\overline{M}\rightarrow \overline{N}$ is an isomorphism with inverse $h:\overline{N}\rightarrow \overline{M}$. There exists $e\in \Hom_{\Lambda}(M,N)$ such that for all $m\in M$, $\pi^{k-k_0}+\Lambda\pi^{k}$ divides $\overline{e(m)}-g(\overline{m})$ and $f\in \Hom_{\Lambda}(N,M)$ such that for all $m\in N$, $\pi^{k-k_0}+\Lambda\pi^k$ divides $\overline{f(m)}-h(\overline{m})$. Since $\overline{f}\circ \overline{e}= (\overline{f}-h)\circ (\overline{e}-g)+(\overline{f}-h)\circ g+ h\circ(\overline{e}-g)+h\circ g$, \ref{isopluspidivisiblemapisiso} implies that $\overline{f}\circ\overline{e}$ is an isomorphism. Similarly, we can show that $\overline{e}\circ\overline{f}$ is an isomorphism. Thus $\overline{e}$ and $\overline{f}$ are both isomorphisms. So the above arguments imply that $e$ and $f$ are both isomorphisms.
\end{proof}

\begin{theorem}\label{marandapresind}
Let $k\geq k_0+1$. If $N$ is an indecomposable $R$-torsion-free $R$-reduced pure-injective $\Lambda$-module then $N/N\pi^k$ is indecomposable.
\end{theorem}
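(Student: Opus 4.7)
My plan is to argue contrapositively: assume $\overline N := N/N\pi^k$ decomposes non-trivially as $A \oplus B$, and derive a contradiction to the hypothesis that $N$ is an indecomposable pure-injective $\Lambda$-module. Let $e \in \End_{\Lambda_k}(\overline N)$ be the idempotent projecting onto $A$ along $B$; since the decomposition is non-trivial, both $e$ and $1_{\overline N} - e$ are nonzero, with kernels $B$ and $A$ respectively.

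The first step is to lift $e$ back to $N$ using \ref{uremaranda} (applied with $M = N$ and $g = e$): there is $h \in \End_\Lambda(N)$ such that $\overline h - e$ has image contained in $\pi^{k-k_0}\overline N$, which sits inside $\pi \overline N$ because $k \geq k_0 + 1$. Write $\overline h = e + \sigma$ where $\sigma$ has image in $\pi\overline N$, placing us in the hypothesis of \ref{isopluspidivisiblemapisiso}.

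Next I would invoke the classical fact that $\End_\Lambda(N)$ is a local ring whenever $N$ is an indecomposable pure-injective module. Consequently either $h$ or $1_N - h$ is an automorphism of $N$. If $h$ is invertible, then $\overline h$ is an automorphism of $\overline N$, and
\[\overline h^{-1}\circ e = 1_{\overline N} - \overline h^{-1}\circ \sigma.\]
Since $\pi$ is central in $\Lambda_k$, the endomorphism $\overline h^{-1}\circ \sigma$ still has image in $\pi\overline N$, so \ref{isopluspidivisiblemapisiso} (with $g = 1_{\overline N}$) forces $\overline h^{-1}\circ e$ to be an automorphism of $\overline N$. But $\ker(\overline h^{-1}\circ e) = \ker e = B$, so $B = 0$, a contradiction. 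If instead $1_N - h$ is invertible, the symmetric argument run with $1_{\overline N} - e$ in place of $e$ (and error term $-\sigma$) yields $A = 0$.

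The non-formal ingredients are exactly \ref{uremaranda}, which supplies a lift with a controlled error of size $\pi^{k-k_0}$, and the local-ness of $\End_\Lambda(N)$, which is precisely where pure-injectivity is essential (without it one could not run the "$h$ or $1_N - h$ is a unit" dichotomy). I do not foresee any real obstacle beyond assembling these tools via \ref{isopluspidivisiblemapisiso}; the argument is essentially a "lift an idempotent modulo a nilpotent ideal" manoeuvre, with nilpotence supplied by $\pi^k = 0$ in $\Lambda_k$ and the ambient local ring being $\End_\Lambda(N)$.
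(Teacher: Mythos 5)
Your argument is correct and rests on exactly the same ingredients as the paper's proof: the lifting statement \ref{uremaranda} with error term landing in $\overline{N}\pi$, the unit-transfer lemma \ref{isopluspidivisiblemapisiso}, and locality of $\End_\Lambda(N)$ for an indecomposable pure-injective. The only (inessential) difference is packaging: the paper shows directly that for every $f\in\End(N/N\pi^k)$ either $f$ or $1-f$ is an isomorphism, so that $\End(N/N\pi^k)$ is local, whereas you run the same dichotomy contrapositively on a lifted idempotent.
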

\begin{proof}
We will show that for all $f\in \End \overline{N}$, either $f$ is an isomorphism or $1-f$ is an isomorphism. Hence $\End \overline{N}$ is local.

Proposition \ref{uremaranda} implies that the homomorphism sending $f\in \End N$ to $\overline{f}\in\End\overline{N}$ induces a surjective ring homomorphism from $\End N$ to $\End \overline{N}/\{g\in\End \overline{N} \st g(n)\in \overline{N}\pi  \text{ for all } n\in \overline{N} \}$.

Suppose $f\in \End \overline{N}$ is not an isomorphism. There exists $g\in \End N$ and $\sigma\in\End \overline{N}$ such that $f=\overline{g}+\sigma$ and $\sigma(n)\in\overline{N}\pi$ for all $n\in\overline{N}$. By \ref{isopluspidivisiblemapisiso}, $\overline{g}$ is not an isomorphism and hence neither is $g$. Since $\End N$ is local, $\text{Id}_N-g$ is an isomorphism. Thus $\text{Id}_{\overline{N}}-\overline{g}$ is an isomorphism. So by \ref{isopluspidivisiblemapisiso}, $\text{Id}_{\overline{N}}-f=\text{Id}_{\overline{N}}-(\overline{g}+\sigma)$ is an isomorphism, as required.
%
%
%
\end{proof}

We now show that Maranda's functor preserves pure-injective hulls. The proof uses somewhat different techniques to those used so far and relies on \cite[4.6]{TfpartZgorders}. In order to avoid introducing various definitions that will not be used in the rest of this paper, we state only the part of that proposition which we need.

\begin{proposition}\label{clasMarlattiso}
Let $k\geq k_0+1$. For all $\psi\in [\pi^{k-k_0}|\mathbf{x},\mathbf{x}=\mathbf{x}]\subseteq \pp^n_{\Lambda}$ there exists $\widehat{\psi}\in[\pi^{k-k_0}+\Lambda\pi^k|\mathbf{x},\mathbf{x}=\mathbf{x}]\subseteq \pp_{\Lambda_k}^n$ such that for all $M\in\Tf_\Lambda$ and $\mathbf{m}\in M$, $\mathbf{m}\in\psi(M)$ if and only if $\mathbf{m}+M\pi^k\in\widehat{\psi}(M/M\pi^k)$.
\end{proposition}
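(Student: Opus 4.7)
The plan is to construct $\widehat{\psi}$ from a free realization of $\psi$ chosen so that it is a $\Lambda$-lattice. Let $(L,\mathbf{a})$ be any free realization of $\psi$, i.e., a finitely presented $\Lambda$-module $L$ together with $\mathbf{a}\in L^n$ such that $\psi(N)=\{f(\mathbf{a}) : f\in\Hom_\Lambda(L,N)\}$ for every $\Lambda$-module $N$. Set $L_0:=L/\text{tors}_R(L)$ and write $\mathbf{a}_0\in L_0^n$ for the image of $\mathbf{a}$; then $L_0\in\Latt_\Lambda$, and since every $\Lambda$-homomorphism from $L$ into an $R$-torsion-free module annihilates $\text{tors}_R(L)$, the pair $(L_0,\mathbf{a}_0)$ realises the same solution set as $(L,\mathbf{a})$ on every $M\in\Tf_\Lambda$. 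I propose to take $\widehat{\psi}$ to be the pp-formula over $\Lambda_k$ whose free realization is $(L_0/L_0\pi^k,\,\mathbf{a}_0+L_0\pi^k)$.

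To check that $\widehat{\psi}$ lies in the claimed interval, unpack the hypothesis $(\pi^{k-k_0}|\mathbf{x})\leq\psi$: it forces the tuple $(\pi^{k-k_0}\mathbf{e}_1,\ldots,\pi^{k-k_0}\mathbf{e}_n)\in(\Lambda^n)^n$ to lie in $\psi(\Lambda^n)$, producing a $\Lambda$-homomorphism $L\to\Lambda^n$ sending $\mathbf{a}$ to this tuple. Since $\Lambda^n$ is $R$-torsion-free, it factors through $L_0$ as $t\colon L_0\to\Lambda^n$ with $t(\mathbf{a}_0)=(\pi^{k-k_0}\mathbf{e}_1,\ldots,\pi^{k-k_0}\mathbf{e}_n)$, and reducing $t$ modulo $\pi^k$ exhibits $(\pi^{k-k_0}+\Lambda\pi^k|\mathbf{x})\leq\widehat{\psi}$. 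The forward direction of the equivalence is immediate: any $f\colon L_0\to M$ with $f(\mathbf{a}_0)=\mathbf{m}$ reduces modulo $\pi^k$ to a $\Lambda_k$-homomorphism $L_0/L_0\pi^k\to M/M\pi^k$ witnessing $\overline{\mathbf{m}}\in\widehat{\psi}(\overline{M})$.

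The reverse direction is the heart of the matter. Given $\overline{\mathbf{m}}\in\widehat{\psi}(\overline{M})$, pick $g\colon L_0/L_0\pi^k\to \overline{M}$ with $g(\overline{\mathbf{a}_0})=\overline{\mathbf{m}}$. Since $L_0$ is a lattice and $M\in\Tf_\Lambda$, Lemma \ref{marandalemmalatttotf} produces $h\in\Hom_\Lambda(L_0,M)$ with $\overline{h(l)}-g(\overline{l})\in\pi^{k-k_0}\overline{M}$ for every $l\in L_0$. Using $\pi^k M\subseteq\pi^{k-k_0}M$, this gives $h(\mathbf{a}_0)=\mathbf{m}+\pi^{k-k_0}\mathbf{u}$ for some $\mathbf{u}=(u_1,\ldots,u_n)\in M^n$. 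Let $s\colon\Lambda^n\to M$ be the $\Lambda$-homomorphism defined by $s(\mathbf{e}_i):=u_i$. Then $s\circ t\colon L_0\to M$ sends $\mathbf{a}_0$ to $\pi^{k-k_0}\mathbf{u}$, so $f:=h-s\circ t$ is a $\Lambda$-homomorphism $L_0\to M$ with $f(\mathbf{a}_0)=\mathbf{m}$, proving $\mathbf{m}\in\psi(M)$.

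The main obstacle is structural. Lemma \ref{marandalemmalatttotf} demands a lattice as source and unavoidably introduces a $\pi^{k-k_0}$-error in the lift, so the argument requires both a free realization that is a lattice and a mechanism to cancel this error within $\Hom_\Lambda(L_0,M)$. Passing from $L$ to $L_0$ secures the former without altering $\psi$ on $\Tf_\Lambda$, and the slack homomorphism $t$, whose existence is equivalent to the interval hypothesis on $\psi$, supplies the correction. The critical point is that because $t$ is defined on all of $L_0$, the composite $s\circ t$ automatically respects every relation in $L_0$; this is exactly what allows $h-s\circ t$ to remain a bona fide $\Lambda$-homomorphism $L_0\to M$ rather than merely a map of tuples.
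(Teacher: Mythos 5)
Your argument looks correct, and it is worth noting that the paper itself gives no proof of this proposition: it is imported verbatim from \cite[4.6]{TfpartZgorders}, stated "in order to avoid introducing various definitions". So your free-realization argument is a genuinely self-contained alternative route. Its ingredients are all available inside this paper: you take a free realization $(L,\mathbf{a})$ of $\psi$, pass to the torsion-free quotient $L_0=L/\mathrm{tors}_R(L)$ (a $\Lambda$-lattice realizing the same solution sets on $\Tf_\Lambda$, since maps into torsion-free modules kill the torsion), let $\widehat{\psi}$ generate the pp-type of $\overline{\mathbf{a}_0}$ in the finitely presented $\Lambda_k$-module $L_0/L_0\pi^k$, and then the only genuinely Maranda-flavoured input is Lemma \ref{marandalemmalatttotf}, whose $\pi^{k-k_0}$-error you cancel with the "slack" map $t\colon L_0\to\Lambda^n$ supplied by the hypothesis $\pi^{k-k_0}|\mathbf{x}\leq\psi$. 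That correction trick is exactly the right mechanism, and the verification that $f=h-s\circ t$ is a bona fide $\Lambda$-homomorphism with $f(\mathbf{a}_0)=\mathbf{m}$ is sound. Two small points you could spell out but which are routine: (a) why reducing $t$ modulo $\pi^k$ yields $(\pi^{k-k_0}+\Lambda\pi^k)\,|\,\mathbf{x}\leq\widehat{\psi}$ in all of $\pp^n_{\Lambda_k}$ --- either note that $\bigl(\Lambda_k^n,(\mathbf{e}_i(\pi^{k-k_0}+\Lambda\pi^k))_i\bigr)$ is a free realization of the divisibility formula, or compose $\overline{t}$ with the map $\Lambda_k^n\to N$ sending $\mathbf{e}_i$ to a chosen quotient witness; and (b) that divisibility of $\overline{h(\mathbf{a}_0)}-\overline{\mathbf{m}}$ by $\pi^{k-k_0}+\Lambda\pi^k$ in $M/M\pi^k$ gives $h(\mathbf{a}_0)-\mathbf{m}\in M^n\pi^{k-k_0}$ because $M\pi^k\subseteq M\pi^{k-k_0}$, which you do say. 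Since the paper only needs the statement as quoted (not the full lattice isomorphism of the cited source), your proof covers everything required.
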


The following useful lemma was communicated to me by Mike Prest.

\begin{lemma}\label{onevariable}
Let $M\in\Mod\text{-}S$, $H(M)$ be its pure-injective hull and $\mathbf{b}\in H(M)$ be an $n$-tuple. Suppose that $\mathbf{b}\in \phi(H(M))\backslash\bigcup_{i=1}^l\psi_i(H(M))$ where $\phi,\psi_1,\ldots,\psi_n$ are pp-$n$-formulas. There exists an $n$-tuple $\mathbf{b}'\in M$ and a pp-$n$-formula $\theta$ such that $\theta(\mathbf{b}'-\mathbf{b})$ holds and
\[H(M)\models \theta(\mathbf{b}'-\mathbf{y})\rightarrow \phi(\mathbf{y})\wedge\bigwedge_{i=1}^n\neg\psi_i(\mathbf{y}).\]
\end{lemma}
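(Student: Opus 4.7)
The plan is to translate the displayed implication into algebraic conditions on a pp-formula $\theta$ and tuple $\mathbf{b}' \in M^n$, and then construct them by combining algebraic compactness of $H(M)$ with its characterization as the pure-injective hull of $M$. Setting $N = H(M)$, the required properties amount to $\theta \leq \phi$ (so that membership in $\theta(N)$ implies membership in $\phi(N)$), $\mathbf{b}' - \mathbf{b} \in \theta(N)$, and the coset $\mathbf{b}' + \theta(N)$ being disjoint from each $\psi_i(N)$; the last, given $\mathbf{b}' - \mathbf{b} \in \theta(N)$, is equivalent to $\mathbf{b} \notin (\psi_i + \theta)(N)$ for each $i$. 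So the task reduces to producing a pp-$n$-formula $\theta \leq \phi$ satisfying (A) $\mathbf{b} \in M^n + \theta(N)$ and (B) $\mathbf{b} \notin (\psi_i + \theta)(N)$ for each $i$. Once such $\theta$ is found, any $\mathbf{b}' \in M^n \cap (\mathbf{b} + \theta(N))$ works.

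Next, I would check that the set $T_A$ of pp-formulas $\theta \leq \phi$ satisfying (A) is a filter. Upward closure is immediate, and closure under finite meets uses the purity of $M$ in $N$: given $\mathbf{b} = \mathbf{b}'_j + \mathbf{c}_j$ with $\mathbf{b}'_j \in M^n$ and $\mathbf{c}_j \in \theta_j(N)$ for $j = 1, 2$, one has $\mathbf{b}'_1 - \mathbf{b}'_2 \in (\theta_1 + \theta_2)(N) \cap M^n = (\theta_1 + \theta_2)(M)$, so this difference decomposes as $\mathbf{a}_1 + \mathbf{a}_2$ with $\mathbf{a}_j \in \theta_j(M)$, and $\mathbf{b}' := \mathbf{b}'_1 - \mathbf{a}_1 = \mathbf{b}'_2 + \mathbf{a}_2 \in M^n$ satisfies $\mathbf{b} - \mathbf{b}' \in (\theta_1 \wedge \theta_2)(N)$. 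The set $T_B$ of $\theta \leq \phi$ satisfying (B) is downward closed and contains the zero pp-formula by $\mathbf{b} \notin \psi_i(N)$.

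I would then prove $T_A \cap T_B \neq \emptyset$ by contradiction. Were it empty, a finite pigeonhole on $\{1, \ldots, l\}$ using the filter property would force a single index $i^*$ to satisfy $\mathbf{b} \in (\psi_{i^*} + \theta)(N)$ for every $\theta \in T_A$: otherwise, choosing $\theta_i \in T_A$ with $\mathbf{b} \notin (\psi_i + \theta_i)(N)$ for each $i$ and meeting, $\bigwedge_i \theta_i \in T_A \cap T_B$. Algebraic compactness of $N$ applied to the pp-conditions $\{\psi_{i^*}(\mathbf{x})\} \cup \{\theta(\mathbf{b} - \mathbf{x}) : \theta \in T_A\}$, finitely realised by this assumption, then produces $\mathbf{a} \in \psi_{i^*}(N)$ with $\mathbf{b} - \mathbf{a} \in \bigcap_{\theta \in T_A} \theta(N)$.

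The main obstacle, and the place where the fact that $N$ is the pure-injective \emph{hull} of $M$ rather than an arbitrary pure-injective extension is essential, is to show that $\bigcap_{\theta \in T_A} \theta(N) = 0$, forcing $\mathbf{a} = \mathbf{b}$ and the contradiction $\mathbf{b} \in \psi_{i^*}(N)$. The filter $T_A$ consists of those $\theta$ for which the coset $\mathbf{b} + \theta(N)$ meets $M^n$; the minimality of $H(M)$ among pure-injective extensions of $M$ ought to ensure this family is rich enough to pp-separate any nonzero element of $N$, so that its common intersection inside $N$ is trivial. Making this last step rigorous is the technical heart of the argument.
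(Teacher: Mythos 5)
Your reduction is sound: translating the required implication into the two conditions (A) $\mathbf{b}\in M^n+\theta(H(M))$ and (B) $\mathbf{b}\notin(\psi_i+\theta)(H(M))$ for a pp-formula $\theta\leq\phi$ is correct, the verification that the set $T_A$ is closed under finite meets (via purity of $M$ in $H(M)$ applied to $\theta_1+\theta_2$) is correct, and the pigeonhole-plus-algebraic-compactness step producing $\mathbf{a}\in\psi_{i^*}(H(M))$ with $\mathbf{b}-\mathbf{a}\in\bigcap_{\theta\in T_A}\theta(H(M))$ is also fine. But the argument then rests entirely on the claim $\bigcap_{\theta\in T_A}\theta(H(M))=0$, which you do not prove and explicitly defer as ``the technical heart''. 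That claim is precisely the point at which the hull property must be used --- everything before it is valid for an arbitrary pure-injective pure extension of $M$, over which the lemma is false --- so the proposal as it stands omits the entire content of the lemma. Moreover, it is not clear that minimality of $H(M)$ yields the claim in the way you suggest: the natural attempt (a nonzero $\mathbf{d}$ in the intersection satisfies $\mathrm{pp}(\mathbf{b}+\mathbf{d}/M)\supseteq\mathrm{pp}(\mathbf{b}/M)$, hence there is an endomorphism of $H(M)$ fixing $M$ pointwise and sending $\mathbf{b}$ to $\mathbf{b}+\mathbf{d}$) produces no contradiction, since pure-essentiality only forces such a map to be an automorphism, and $H(M)$ generally has many automorphisms fixing $M$ pointwise. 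So you would need a genuinely new argument here, and it may be of the same depth as the lemma itself.

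For comparison, the paper does not argue from scratch: it quotes \cite[4.10]{MikesBluebook}, which says that for a tuple $\mathbf{b}$ in the hull $H(M)$ any formula over $M$ satisfied by $\mathbf{b}$ (here $\phi(\mathbf{y})\wedge\bigwedge_i\neg\psi_i(\mathbf{y})$) is implied, in $H(M)$, by a pp formula $\chi(\mathbf{a},\mathbf{y})$ with parameters $\mathbf{a}$ from $M$ lying in the pp-type of $\mathbf{b}$ over $M$. The remaining work is only to remove the parameters: since $\exists\mathbf{y}\,\chi(\mathbf{a},\mathbf{y})$ holds and $M$ is elementary (purity already suffices for this pp condition) in $H(M)$, one finds $\mathbf{b}'\in M$ with $\chi(\mathbf{a},\mathbf{b}')$, and then additivity of pp solution sets gives $\chi(\mathbf{0},\mathbf{b}'-\mathbf{b})$, so $\theta(\mathbf{z}):=\chi(\mathbf{0},\mathbf{z})$ works. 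If you want to complete your approach, you should either prove your intersection claim (and check it really holds for hulls) or, more economically, route the key step through a known structural result about pure-injective hulls such as the one cited.
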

\begin{proof}
Let $\mathbf{b}\in H(M)$. Suppose that $\mathbf{b}\in\phi(H(M))$ and $\mathbf{b}\notin\bigcup_{i=1}^l\psi_i(H(M))$.

By \cite[4.10]{MikesBluebook}, there exists $\mathbf{a}\in M$ and a pp formula $\chi(\mathbf{x},\mathbf{y})$ such that $\chi(\mathbf{a},\mathbf{b})$ holds in $H(M)$ and

\[H(M)\models\chi(\mathbf{a},\mathbf{y})\rightarrow \phi(\mathbf{y})\wedge\bigwedge_{i=1}^n\neg\psi_i(\mathbf{y}).\]

Since $H(M)$ is an elementary extension of $M$, there exists $\mathbf{b}'\in M$ such that $\chi(\mathbf{a},\mathbf{b}')$ holds in $M$ and hence in $H(M)$. Thus $\chi(\mathbf{0},\mathbf{b}'-\mathbf{b})$ holds in $H(M)$. Set $\theta(\mathbf{z}):=\chi(\mathbf{0},\mathbf{z})$. So $\theta(\mathbf{b}'-\mathbf{b})$ holds in $H(M)$.

Suppose $\mathbf{c}\in H(M)$ and $\theta(\mathbf{b}'-\mathbf{c})$ holds in $H(M)$. Then $\chi(\mathbf{a},\mathbf{c})$ holds in $H(M)$. Thus $\phi(\mathbf{c})\wedge\bigwedge_{i=1}^l\neg\psi_i(\mathbf{c})$ holds in $H(M)$. So $\theta(\mathbf{b}'-\mathbf{b})$ holds and

\[H(M)\models \theta(\mathbf{b}'-\mathbf{y})\rightarrow \phi(\mathbf{y})\wedge\bigwedge_{i=1}^l\neg\psi_i(\mathbf{y}).\]
\end{proof}

The following theorem is motivated by \cite[3.16]{Interpretingmodules}.


\begin{theorem}\label{prespihulls}
Let $k\geq k_0+1$ and $M\in\Tf_\Lambda$. If $u:M\rightarrow H(M)$ is a pure-injective hull of $M$ then the induced map $\overline{u}:M/M\pi^k\rightarrow H(M)/H(M)\pi^k$ is a pure-injective hull for $M/M\pi^k$.
\end{theorem}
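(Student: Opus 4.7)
The plan is to verify that $\overline{u}: \overline{M} \to \overline{H(M)}$ is a pure embedding into a pure-injective satisfying the separation property of Lemma \ref{onevariable}, which by its standard converse characterisation identifies $\overline{u}$ as a pure-injective hull of $\overline{M}$. Throughout I use that Maranda's functor $F := (-)/(-)\pi^k : \Tf_\Lambda \to \Mod\text{-}\Lambda_k$ is an interpretation functor, together with the following explicit pp-formula translation: each pp-formula $\phi(\mathbf{x}) = \exists\overline{\mathbf{y}}\,(\overline{\mathbf{y}},\mathbf{x})\overline{B}=\overline{0}$ over $\Lambda_k$ lifts, via any lift $B$ of $\overline{B}$ to $\Lambda$, to the pp-formula $\widetilde{\phi}(\mathbf{x}) := \exists \mathbf{y},\mathbf{z}\,(\mathbf{y},\mathbf{x})B = \mathbf{z}\pi^k$ over $\Lambda$, which lies in the interval $[\pi^k|\mathbf{x}, \mathbf{x}=\mathbf{x}]$, is invariant under adding multiples of $\pi^k$ to $\mathbf{x}$, and satisfies $\overline{\mathbf{n}} \in \phi(\overline{N})$ iff $\widetilde{\phi}(\mathbf{n})$ holds in $N$ for every $N\in\Tf_\Lambda$.

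Since $F$ commutes with direct limits and preserves split monomorphisms, it preserves pure embeddings, so $\overline{u}$ is pure. Pure-injectivity of $\overline{H(M)}$ then follows by lifting algebraic compactness through $F$: if $\{\overline{\mathbf{a}_i}+\phi_i(\overline{H(M)})\}_{i\in I}$ is a finitely-solvable system of pp-defined cosets in $\overline{H(M)}$, then the lifted system $\{\mathbf{a}_i+\widetilde{\phi_i}(H(M))\}_{i\in I}$ in $H(M)$ is still finitely solvable (lift any finite-subset solution from $\overline{H(M)}$) and admits a global solution by pure-injectivity of $H(M)$; its image in $\overline{H(M)}$ solves the original system.

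For the separation property, take a tuple $\overline{\mathbf{b}}\in\phi(\overline{H(M)})\setminus\bigcup_i\psi_i(\overline{H(M)})$ with $\phi,\psi_1,\ldots,\psi_l$ over $\Lambda_k$. Via the translation, $\mathbf{b}\in\widetilde{\phi}(H(M))\setminus\bigcup_i\widetilde{\psi_i}(H(M))$ in $H(M)$, and Lemma \ref{onevariable} supplies $\mathbf{b}'\in M$ and $\theta\in\pp_\Lambda^n$ with $\theta(\mathbf{b}'-\mathbf{b})$ and $\theta(\mathbf{b}'-\mathbf{y})\to\widetilde{\phi}(\mathbf{y})\wedge\bigwedge_i\neg\widetilde{\psi_i}(\mathbf{y})$ in $H(M)$. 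Form $\theta'(\mathbf{z}):=\exists\mathbf{w}\,\theta(\mathbf{z}+\mathbf{w}\pi^k)$; this lies in $[\pi^k|\mathbf{x},\mathbf{x}=\mathbf{x}]$ (since if $\pi^k|\mathbf{z}$ then choosing $\mathbf{w}$ so that $\mathbf{z}+\mathbf{w}\pi^k=\mathbf{0}$ gives $\theta(\mathbf{0})$) and is coset-invariant modulo $\pi^k$, so corresponds under $F$ to a pp-formula $\overline{\theta}$ over $\Lambda_k$. Then $\overline{\theta}(\overline{\mathbf{b}'}-\overline{\mathbf{b}})$ holds (take $\mathbf{w}=0$), and whenever $\overline{\theta}(\overline{\mathbf{b}'}-\overline{\mathbf{y}})$ holds one has $\theta(\mathbf{b}'-(\mathbf{y}-\mathbf{w}\pi^k))$ for some $\mathbf{w}\in H(M)$; the implication from \ref{onevariable}, combined with coset-invariance of $\widetilde{\phi},\widetilde{\psi_i}$, then yields $\phi(\overline{\mathbf{y}})\wedge\bigwedge_i\neg\psi_i(\overline{\mathbf{y}})$ in $\overline{H(M)}$, as required.

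The main technical nuisance is the pp-formula correspondence. The author signals that Proposition \ref{clasMarlattiso} will play a role; that result gives a sharper correspondence on the tighter interval $[\pi^{k-k_0}|\mathbf{x},\mathbf{x}=\mathbf{x}]$, modulo the theory of $\Tf_\Lambda$, which presumably allows one to find a $\theta$ already of the required shape and to bypass the ``saturation'' step $\theta'=\exists\mathbf{w}\,\theta(\mathbf{z}+\mathbf{w}\pi^k)$ used above. Either way, the essential idea is to transfer the density characterisation of the pure-injective hull from $H(M)$ to $\overline{H(M)}$ via the interpretation functor.
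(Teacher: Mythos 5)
Your overall strategy --- transfer the density property of Lemma \ref{onevariable} through Maranda's functor and then use it to kill any complementary summand of a summand containing $\overline{M}$ --- is the same strategy as the paper's, and your preliminary steps (purity of $\overline{u}$, pure-injectivity of $H(M)/H(M)\pi^k$, and the easy translation $\phi\mapsto\widetilde{\phi}$ from $\Lambda_k$-formulas to $\Lambda$-formulas) are fine. The genuine gap is the assertion that $\theta'(\mathbf{z}):=\exists\mathbf{w}\,\theta(\mathbf{z}+\mathbf{w}\pi^k)$ ``corresponds under $F$ to a pp-formula $\overline{\theta}$ over $\Lambda_k$''. Lying in $[\pi^k|\mathbf{x},\mathbf{x}=\mathbf{x}]$ and being invariant modulo $\pi^k$ only says that $\theta'(N)$ is a union of cosets of $N^n\pi^k$ for $N\in\Tf_\Lambda$; it does not produce a $\Lambda_k$-formula whose solution set in $N/N\pi^k$ equals $\theta'(N)/N^n\pi^k$ uniformly. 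Simply reading $\theta'$ modulo $\pi^k$ gives only the weaker condition with witnesses modulo $H(M)\pi^k$ (i.e.\ the $\widetilde{\,\cdot\,}$-version), which cannot be fed into the implication supplied by Lemma \ref{onevariable}. The only tool for this ``hard direction'' is Proposition \ref{clasMarlattiso}, and it applies only to formulas in the smaller interval $[\pi^{k-k_0}|\mathbf{x},\mathbf{x}=\mathbf{x}]$ --- this is exactly where Maranda's constant and the hypothesis $k\geq k_0+1$ must enter, and it is a warning sign that your argument never uses that hypothesis. The obvious repair, saturating by $\pi^{k-k_0}$ instead of $\pi^k$, does land in the right interval but breaks the final step: from $\theta(\mathbf{b}'-\mathbf{y}-\mathbf{w}\pi^{k-k_0})$ you cannot recover $\widetilde{\phi}(\mathbf{y})\wedge\bigwedge_i\neg\widetilde{\psi_i}(\mathbf{y})$, because $\widetilde{\phi}$ and the $\widetilde{\psi_i}$ are only invariant modulo $\pi^k$, not modulo $\pi^{k-k_0}$.

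The paper avoids transferring the full separation statement for arbitrary $\phi,\psi_1,\dots,\psi_l$. It only needs, for each nonzero $\overline{c}\in H(M)/H(M)\pi^k$, a single pp-$2$-formula $\chi$ over $\Lambda_k$ and $\overline{a}\in M/M\pi^k$ with $\chi(\overline{a},\overline{c})$ true and $\chi(\overline{a},\overline{0})$ false; since pp formulas commute with direct sums, this already forces $N'=0$ whenever $H(M)/H(M)\pi^k=N\oplus N'$ with $M/M\pi^k\subseteq N$. To obtain such a $\chi$ it writes a lift of $\overline{c}$ as $b\pi^n$ with $\pi\nmid b$ and $n<k$, applies Lemma \ref{onevariable} to $b$ with $\psi:=\pi|x$, and sets $\Delta:=\theta+\pi|x$; by construction $\Delta\geq\pi|x\geq\pi^{k-k_0}|x$, so Proposition \ref{clasMarlattiso} genuinely applies, the non-degeneracy survives because $\Delta(a-c\pi)$ never holds, and the case $n>0$ is handled by the formula $\exists z\,\widehat{\Delta}(x-z)\wedge y=z\pi^n$ together with $R$-torsion-freeness of $H(M)$. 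To rescue your version you would either have to prove an analogue of Proposition \ref{clasMarlattiso} for the whole interval $[\pi^k|\mathbf{x},\mathbf{x}=\mathbf{x}]$ (not available, and not to be expected in general) or restructure the argument, as the paper does, so that the formula being pushed down to $\Lambda_k$ lies above $\pi^{k-k_0}|\mathbf{x}$ from the start.
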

\begin{proof}
We identify $M$ with its image in $H(M)$. Our aim is to show that for all $b\in H(M)$ there exists $a\in M$ and $\chi(x,y)\in\pp^2_{\Lambda_k}$ such that $\chi(\overline{a},\overline{b})$ holds in $H(M)/H(M)\pi^k$ and $\chi(\overline{a},\overline{0})$ does not hold in $H(M)/H(M)\pi^k$.

Suppose that $\pi$ does not divide $b\in H(M)$. Since $H(M)$ is the pure-injective hull of $M$, by \ref{onevariable}, there exists $a\in M$ and a pp formula $\theta(x)\in\pp^1_\Lambda$ such that $\theta(a-b)$ holds in $H(M)$ and $\theta(a-x)\rightarrow \neg\pi|x$. Let $\Delta(x):=\theta(x)+\pi|x$. Then $\Delta(a-b)$ holds in $H(M)$ and for all $c\in H(M)$, $\Delta(a-c\pi)$ does not hold. Let $\widehat{\Delta}$ be as in \ref{clasMarlattiso}. So $\widehat{\Delta}(\overline{a}-\overline{b})$ holds in $H(M)/H(M)\pi^k$ and $\widehat{\Delta}(\overline{b})$ does not hold in $H(M)/H(M)\pi^k$.

Now suppose that $e\in H(M)\backslash H(M)\pi^k$, $e=b\pi^n$ and $\pi$ does not divide $b$. Note that this implies $n<k$. Let $\Delta$ and $a\in M$ be as in the previous paragraph i.e. $\Delta\geq \pi|x$, $\Delta(a-b)$ holds in $H(M)$ and for all $c\in H(M)$, $\Delta(a-c\pi)$ does not hold. Let $\chi(x,y):=\exists z \ \widehat{\Delta}(x-z)\wedge y=z\pi^n\in \pp^2_{\Lambda_k}$. Suppose that $\chi(\overline{a},\overline{0})$ holds. Then there exists $d\in H(M)$ such that $\overline{d}\pi^n=\overline{0}$ and $\widehat{\Delta}(\overline{a}-\overline{d})$ holds. But then $d\pi^n\in H(M)\pi^k$. Since $M$ and hence $H(M)$ is $R$-torsion-free, $d\in H(M)\pi^{k-n}$. This contradicts the definition of $\Delta$. Thus $\chi(\overline{a},\overline{e})$ holds and $\chi(\overline{a},\overline{0})$ does not hold in $H(M)/H(M)\pi^k$.

Suppose that $H(M)/H(M)\pi^k=N\oplus N'$ and $M/M\pi^k\subseteq N$. If $\overline{c}\in H(M)/H(M)\pi^k$ is non-zero then we have shown that there exist $\overline{a}\in \overline{M}$ and $\chi(x,y)\in\pp_{\Lambda_k}^2$ such that $\chi(\overline{a},\overline{c})$ holds and $\chi(\overline{a},\overline{0})$ does not hold. Since the solution sets of pp formulas commute with direct sums, this implies that if $\overline{c}\in N'$ then $\overline{c}=\overline{0}$. Thus $N'$ is the zero module and $H(M)/H(M)\pi^k$ is the pure-injective hull of $M/M\pi^k$.
\end{proof}

\section{Pure-injectives and pure-injective hulls}\label{S-Piandpihulls}
As in the previous section, $R$ will be a discrete valuation domain with field of fractions $Q$ and maximal ideal generated by $\pi$, and $\Lambda$ will be an $R$-order such that $Q\Lambda$ is a separable $Q$-algebra.

We start this section by showing that the pure-injective hull of an $R$-reduced $R$-torsion-free $\Lambda$-module is $R$-reduced. The proof of the following remark is the same as \cite[Claim 2, p. 1128]{TFpartZgRG}.

\begin{remark}\label{divimpliesinj}
If $M\in\Tf_\Lambda$ is $R$-divisible then $M$ is injective as a $\Lambda$-module.
\end{remark}

This allows us to deduce that all $M\in\Tf_\Lambda$ decompose as the direct sum of the divisible part $D_M$ of $M$ and an $R$-reduced module. Explicitly, let
\[D_M:=\{m\in M \st \pi^n|m \text{ for all } n\in\N\}.\] It is easy to check that $D_M$ is $R$-divisible. So, since $R$-divisible $R$-torsionfree $\Lambda$-modules are injective, $D_M$ is a direct summand of $M$. Hence $M\cong D_M\oplus M/D_M$. Now note that if $m\in M$ and $\pi^n|m+D_M$ for all $n\in\N$ then $\pi^n|m$ for all $n\in\N$. Thus $M/D_M$ is $R$-reduced.

\begin{lemma}\label{shiftingcomplementsofinjectives}
Let $S$ be a ring, $C,M,E\in \Mod\text{-}S$ and $E$ injective. Suppose that $C,E\subseteq M$ and $C\cap E=\{0\}$. There exist $N'\subseteq M$ such that $C\subseteq N'$ and $N'\oplus E=M$.
\end{lemma}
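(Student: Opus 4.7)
The plan is to exploit injectivity of $E$ to produce a retraction $r:M\to E$ which vanishes on $C$, and then take $N'=\ker r$.

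First I would note that since $C\cap E=\{0\}$, the submodule $C+E$ of $M$ is in fact an internal direct sum $C\oplus E$, so the formula $\phi(c+e):=e$ gives a well-defined $S$-linear map $\phi:C+E\to E$ which restricts to the identity on $E$ and to zero on $C$. Since $E$ is injective and $C+E\subseteq M$, the map $\phi$ extends to an $S$-linear map $r:M\to E$.

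Next I would set $N':=\ker r$. Because $r$ restricts to the identity on $E$, for any $m\in M$ the element $m-r(m)$ lies in $N'$, so $m=(m-r(m))+r(m)\in N'+E$; and if $x\in N'\cap E$ then $x=r(x)=0$. Hence $M=N'\oplus E$. Finally, since $r|_C=\phi|_C=0$, we have $C\subseteq N'$, as required.

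There is essentially no obstacle here: this is the standard argument that injective submodules are direct summands, adapted by prescribing the retraction on the auxiliary submodule $C$ before extending.
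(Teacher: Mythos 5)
Your proof is correct and is essentially the same argument as the paper's: both obtain a retraction $r:M\to E$ extending the projection of $C\oplus E$ onto $E$ (zero on $C$, identity on $E$) by injectivity of $E$, and take $N'=\ker r$. You merely spell out the construction of the map on $C+E$, which the paper leaves implicit.
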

\begin{proof}
Using injectivity of $E$, there is an $f:M\rightarrow E$ such that $f|_C=0$ and $f|_E=\text{Id}_E$. So $C\subseteq \ker f$ and $M=E\oplus \ker f$.
\end{proof}

\begin{lemma}\label{redimpliesredpihull}
If $C\in\text{Tf}_\Lambda$ is $R$-reduced then $H(C)$ is $R$-reduced.
\end{lemma}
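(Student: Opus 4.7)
The plan is to decompose $H(C)$ into its $R$-divisible part and an $R$-reduced complement, and force the divisible part to vanish using the minimality of the pure-injective hull. First, I note that $H(C)\in\Tf_\Lambda$, since $\Tf_\Lambda$ is a definable subcategory and, as such, is closed under pure-injective hulls. Hence, using the decomposition argument given immediately after \ref{divimpliesinj}, I can write $H(C)=D\oplus N$ where $D:=D_{H(C)}$ is $R$-torsion-free and $R$-divisible (and so $\Lambda$-injective by \ref{divimpliesinj}) and $N$ is $R$-reduced. The goal becomes $D=0$.

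Next, I would verify that $C\cap D=0$. The inclusion $C\hookrightarrow H(C)$ is a pure embedding (by definition of the pure-injective hull), so for any $c\in C\cap D$ and any $n\in\N$, the fact that $\pi^n\vert c$ in $H(C)$ (witnessed inside $D$) pulls back by purity to $\pi^n\vert c$ in $C$; hence $c\in D_C=0$ since $C$ is $R$-reduced. With this in hand, I apply \ref{shiftingcomplementsofinjectives} to the triple $C\subseteq H(C)\supseteq D$, obtaining $N'\subseteq H(C)$ with $C\subseteq N'$ and $H(C)=N'\oplus D$.

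Finally, I would argue $D=0$ by invoking the minimality of the pure-injective hull. The module $N'$ is pure-injective as a direct summand of $H(C)$, and the inclusion $C\hookrightarrow N'$ is pure (being the composition of the pure embedding $C\hookrightarrow H(C)$ with the split projection $H(C)\twoheadrightarrow N'$). By the universal property of $H(C)$ stated in the preliminaries, there is a pure embedding $h:H(C)\to N'$ with $h\vert_C=\mathrm{id}_C$. Composing with the inclusion $\iota:N'\hookrightarrow H(C)$ yields an endomorphism $\iota h$ of $H(C)$ that is the identity on $C$, and by the standard pure-essentiality of $C$ in $H(C)$ (every endomorphism of $H(C)$ fixing $C$ is an automorphism) this forces $\iota h$ to be surjective, whence $N'=H(C)$ and $D=0$.

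The main obstacle is the last step: the pure-essentiality of $C\hookrightarrow H(C)$ is the standard characterisation of the pure-injective hull but is not explicitly isolated in the preliminaries of the excerpt. One would either cite it from \cite{PSL} (where it appears alongside the universal property used to define $H(M)$), or briefly verify it by noting that a pure embedding $g:H(C)\to H(C)$ with $g\vert_C=\mathrm{id}_C$ has image $g(H(C))$ which is a direct summand of $H(C)$ containing $C$, and iterating the construction together with uniqueness of the pure-injective hull over $C$ to rule out a proper summand.
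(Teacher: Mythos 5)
Your proof is correct and follows essentially the same route as the paper: decompose $H(C)$ into divisible and reduced parts, use purity of $C$ in $H(C)$ to get $C\cap D_{H(C)}=0$, apply Lemma \ref{shiftingcomplementsofinjectives} to move the complement over $C$, and conclude by minimality of the pure-injective hull. The only difference is cosmetic: where the paper simply invokes that no proper direct summand of $H(C)$ can contain $C$, you derive this from the universal property together with the standard fact that an endomorphism of $H(C)$ fixing $C$ is an automorphism --- the same essentiality property of hulls (citable from \cite{PSL}), so your flagged ``gap'' is exactly the fact the paper itself uses implicitly.
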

\begin{proof}
Since $Q\Lambda$ is separable, $H(C)=N\oplus D_{H(C)}$. Since $C$ is pure in $H(C)$ and $C$ is reduced, $C\cap D_{H(C)}=\{0\}$. By \ref{shiftingcomplementsofinjectives}, there exists $N'\subseteq H(C)$ such that $N'\oplus D_{H(C)}=H(C)$ and $C\subseteq N'$. Since $N$ and $N'$ are isomorphic, $N'$ is reduced. Since $N'$ is a direct summand of $H(C)$ and $C\subseteq N'\subseteq H(C)$, $N'=H(C)$. Thus $H(C)$ is $R$-reduced.
\end{proof}

\begin{definition}
If $M$ is a $\Lambda$-module then let $M^*$ denote the inverse limit along the canonical maps $M/M\pi^{n+1}\rightarrow M/M\pi^n$.
\end{definition}

\begin{remark}\label{redpicompletion}
Let $R$ be a discrete valuation domain and $M$ an $R$-reduced pure-injective $R$-module. Then $M$ is isomorphic to $M^*$.
\end{remark}

\begin{theorem}
Let $M\in\Tf_\Lambda$. Then $M$ is pure-injective if and only if
\begin{enumerate}
\item $M/M\pi^k$ is pure-injective for all $k\in\N$ and
\item $M$ is pure-injective as an $R$-module.
\end{enumerate}
\end{theorem}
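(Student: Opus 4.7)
The strategy is to reduce to the $R$-reduced case by splitting off the $R$-divisible part of $M$, and then compare $M$ to its pure-injective hull $H(M)$ via an inverse-limit description. Decompose $M = D \oplus N$ with $D$ the $R$-divisible part of $M$ and $N$ the $R$-reduced complement. By \ref{divimpliesinj}, $D$ is $\Lambda$-injective and hence $\Lambda$-pure-injective, and $D\pi^k = D$ gives $M/M\pi^k \cong N/N\pi^k$. Since $N$ is a $\Lambda$-direct summand of $M$ (hence also an $R$-direct summand), conditions (1), (2), and the $\Lambda$-pure-injectivity of $M$ each hold for $M$ if and only if they hold for $N$, so it suffices to treat the case when $M$ is $R$-reduced.

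For the forward direction, assume $M$ is $\Lambda$-pure-injective. Maranda's functor is an interpretation functor, as pointed out in Section \ref{S-prelim}, and using the algebraic compactness characterisation of pure-injectivity one checks directly that it sends pure-injectives to pure-injectives: a finitely solvable system of pp formulas on $M/M\pi^k$ lifts, via matrix representatives in $\Lambda$, to a finitely solvable system of $\Lambda$-pp formulas on $M$, and a witness in $M$ reduces to a witness in $M/M\pi^k$. This yields (1). For (2), every $R$-pp formula is a $\Lambda$-pp formula with the same solution set in $M$, so $\Lambda$-algebraic compactness of $M$ entails $R$-algebraic compactness.

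For the converse, assume (1) and (2). Since $M$ is $R$-reduced and $R$-pure-injective, \ref{redpicompletion} gives an isomorphism $M \cong M^* = \varprojlim_n M/M\pi^n$, realised by the canonical $\Lambda$-linear comparison map, which is therefore a $\Lambda$-module isomorphism. Let $u: M \to H(M)$ denote the pure-injective hull of $M$ in $\Mod\text{-}\Lambda$. By \ref{redimpliesredpihull} the module $H(M)$ is $R$-reduced, and being $\Lambda$-pure-injective it is $R$-pure-injective (by the forward-direction argument for (2) applied to $H(M)$), so \ref{redpicompletion} likewise gives $H(M) \cong H(M)^*$. For each $n \geq k_0 + 1$, \ref{prespihulls} identifies the induced map $M/M\pi^n \to H(M)/H(M)\pi^n$ with the pure-injective hull of $M/M\pi^n$, and by (1) the domain is already pure-injective, so this induced map is an isomorphism. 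These maps are compatible with the canonical projections, so passing to the inverse limit along the cofinal subsystem $\{n \in \N : n \geq k_0 + 1\}$ produces an isomorphism $M^* \to H(M)^*$ fitting into a commutative square with the canonical identifications $M \cong M^*$ and $H(M) \cong H(M)^*$; this forces $u$ itself to be an isomorphism, so $M \cong H(M)$ is pure-injective. The main subtlety is the coherence between the $R$-module identification supplied by \ref{redpicompletion} and the $\Lambda$-structure of $u$, which is straightforward because every map in sight is $\Lambda$-linear and a $\Lambda$-linear $R$-module isomorphism is automatically a $\Lambda$-module isomorphism.
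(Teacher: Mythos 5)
Your proof is correct and follows essentially the same route as the paper: split off the $R$-divisible (injective) part to reduce to the $R$-reduced case, then combine \ref{prespihulls}, \ref{redimpliesredpihull} and \ref{redpicompletion} to identify $M$ with $M^*\cong H(M)^*\cong H(M)$. The only differences are cosmetic — you spell out the easy forward direction (which the paper takes as standard for interpretation functors and restriction of scalars) and you track the canonical maps to show $u$ itself is an isomorphism, whereas the paper is content with an abstract isomorphism $N\cong H(N)$.
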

\begin{proof}
Certainly, if $M$ is pure-injective then conditions $(1)$ and $(2)$ hold.

So suppose that $(1)$ and $(2)$ hold. We know that $M$ is isomorphic to $D_M\oplus N$ and that $D_M$ is injective. Thus $M$ is pure-injective if and only if $N$ is pure-injective. Moreover, if conditions $(1)$ and $(2)$ hold for $M$ then they also hold of $N$. Let $H(N)$ be the pure-injective hull of $N$. Since $N/N\pi^k$ is pure-injective, \ref{prespihulls} implies that $H(N)/H(N)\pi^k=N/N\pi^k$. By \ref{redimpliesredpihull}, $H(N)$ is reduced and hence is isomorphic to $H(N)^*\cong N^*$. Since $N$ is reduced and pure-injective as an $R$-module, $N\cong N^*$. Thus $N\cong H(N)$ and is hence pure-injective. Thus $M=D_M\oplus N$ is also pure-injective.
\end{proof}

\begin{theorem}\label{pihullquotientspi}
Let $M\in \Tf_\Lambda$ be $R$-reduced and suppose that $M/M\pi^n$ is pure-injective for all $n\in\N$. Then the canonical map from $v:M\rightarrow M^*$ is the pure-injective hull of $M$.
\end{theorem}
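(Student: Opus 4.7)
The plan is to exhibit an isomorphism $\phi\colon M^*\to H(M)$ making the triangle $\phi\circ v=u$ commute, where $u\colon M\to H(M)$ is a pure-injective hull of $M$; once we have such a $\phi$, the map $v$ inherits the universal property of $u$ and is therefore a pure-injective hull of $M$ itself.

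First I would invoke Theorem \ref{prespihulls} applied to $u$: for every $k\in\N$ the induced map $\overline{u}\colon M/M\pi^k\to H(M)/H(M)\pi^k$ is a pure-injective hull of $M/M\pi^k$. Since $M/M\pi^k$ is already pure-injective by hypothesis, pure-injective hulls are unique up to isomorphism over the base, so $\overline{u}$ is in fact an isomorphism for each $k$. Next I would observe that $H(M)$ is $R$-reduced by \ref{redimpliesredpihull} (using that $M$ is reduced and pure in $H(M)$), and that $H(M)$ is pure-injective as an $R$-module because pure-injectivity passes from $\Lambda$-modules to $R$-modules. Then Remark \ref{redpicompletion} gives that the canonical map $H(M)\to H(M)^*$ is an isomorphism.

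At this point the isomorphisms $\overline{u}_k\colon M/M\pi^k\to H(M)/H(M)\pi^k$ are clearly compatible with the canonical projections in each tower, so passing to the inverse limit produces an isomorphism $M^*\to H(M)^*$. Composing with the inverse of the canonical isomorphism $H(M)\to H(M)^*$ defines the desired $\phi\colon M^*\to H(M)$. The commutativity $\phi\circ v=u$ is immediate from the definitions: an element $m\in M$ is sent by $v$ to the compatible system $(m+M\pi^k)_k$, which the inverse limit of the $\overline{u}_k$ carries to $(u(m)+H(M)\pi^k)_k$, which corresponds under $H(M)^*\cong H(M)$ to $u(m)$.

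The only subtle point, and the place most of the bookkeeping lives, is verifying that $\phi$ really is an isomorphism \emph{of $\Lambda$-modules} and that the identification $\phi\circ v=u$ is genuine, rather than merely an abstract isomorphism of the two modules; this is mostly a diagram chase after noting that each $\overline{u}_k$ is $\Lambda_k$-linear and that the transition maps in both inverse systems are canonical quotient maps, so everything in sight is natural. No further input is needed: once $\phi$ is constructed, the universal property of $u$ transfers to $v$, establishing that $v\colon M\to M^*$ is a pure-injective hull of $M$.
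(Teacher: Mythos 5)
Your proposal is correct and follows essentially the same route as the paper's proof: use \ref{prespihulls} to see that each induced map $\overline{u}_k$ is an isomorphism (since $M/M\pi^k$ is pure-injective), pass to the inverse limit to get $M^*\cong H(M)^*$, identify $H(M)$ with $H(M)^*$ via \ref{redimpliesredpihull} and \ref{redpicompletion}, and check the triangle $\phi\circ v=u$ on elements. The only nitpick is that \ref{prespihulls} applies only for $k\geq k_0+1$, not all $k\in\N$ as you state, but this is harmless since those indices form a cofinal subsystem (and isomorphism at a larger stage forces it at the smaller ones), which is exactly how the paper proceeds.
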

\begin{proof}
Let $u:M\rightarrow H(M)$ be a pure-injective hull of $M$. For each $k\in\N$, let $u_k:M/M\pi^k\rightarrow H(M)/H(M)\pi^k$ be the homomorphism induced by $u$. For each $k\geq k_0+1$, $u_k:M/M\pi^k\rightarrow H(M)/H(M)\pi^k$ is the pure-injective hull of $M/M\pi^k$. Since $M/M\pi^k$ is pure-injective, $u_k$ is an isomorphism. The maps $u_k$ induce an isomorphism $w:M^*\rightarrow H(M)^*$. Since $M$ and hence, by \ref{redimpliesredpihull}, $H(M)$ is reduced, $H(M)\cong H(M)^*$. Viewing $H(M)^*$ as a submodule of $\prod_{i\in\N}H(M)/H(M)\pi^i$, for all $m\in M$, $wv(m)=(u(m)+H(M)\pi^i)_{i\in\N}$. Thus $v=w^{-1}u$.
\end{proof}

The same argument as used in the proof above shows that for any $R$-reduced $M\in\Tf_\Lambda$, the pure-injective hull of $M$ is $\varprojlim H(M/M\pi^i)$ along some surjective homomorphisms $p_i:H(M/M\pi^{i+1})\rightarrow H(M/M\pi^i)$. Unfortunately, it is not clear how to explicitly describe the homomorphisms $p_i$ beyond saying that $\ker p_i=H(M/M\pi^{i+1})\pi^{i}$.

For the rest of this section we focus on an application of \ref{pihullquotientspi}. We will calculate the pure-injective hull of the direct limit at the ``top'' of a generalised tube in $\Latt_\Lambda$. This will allow us to describe certain points of $\Zg_\Lambda^{tf}$ as modules when $\Lambda=\widehat{\Z}_{(2)}C_2\times C_2$ and answer the questions at the end of \cite{TFpartkleinfour}.

Following Krause in \cite{KrauseGeneric},  we define a \textbf{generalised tube} in $\mod\text{-}S$ to be a sequence of tuples $\mcal{T}:=(M_i,f_i,g_i)_{i\in\N_0}$ where $M_i\in \mod\text{-}S$, $M_0=0$, $f_i:M_{i+1}\rightarrow M_i$ and $g_i:M_i\rightarrow M_{i+1}$ such that for every $i\in\N$
\[\xymatrix@=20pt{
  M_i \ar[d]_{g_i} \ar[r]^{f_{i-1}} & M_{i-1} \ar[d]^{g_{i-1}} \\
  M_{i+1} \ar[r]^{f_{i}} & M_i   } \]
is a pushout and a pullback.

We will show that if $\mcal{T}$ is a generalised tube in $\Latt_\Lambda$ then its image, denoted $\mcal{T}_k$, in $\mod\text{-}\Lambda_k$ is a generalised tube.

Recall that a diagram
\[\xymatrix@=20pt{
  B \ar[d]_{a} \ar[r]^{b} & L \ar[d]^{g} \\
  M \ar[r]^{f} & P   } \]

is a pushout and a pullback if and only if

\[\xymatrix@C=0.5cm{
  0 \ar[r] & B \ar[rr]^{ \left(\begin{smallmatrix} a\\ b \end{smallmatrix}
                        \right)
  } && M\oplus L \ar[rr]^{ \left(\begin{smallmatrix} f&-g \end{smallmatrix}\right)
  } && P \ar[r] & 0 }\] is an exact sequence.

The following remark seems like it should be false because certainly Maranda's functor does not send monomorphisms between lattices to monomorphisms. Consider the exact sequence below. Since $M$ is projective as an $R$-module and $\beta$ is surjective, there exists $\gamma\in \Hom_R(M,N)$ such that $\beta\gamma=\text{Id}_M$. Thus the exact sequence is split when viewed as an exact sequence of $R$-modules. Therefore the second sequence is a split exact sequence of $R_k$-modules. Hence it is an exact sequence of $\Lambda_k$-modules.

\begin{remark}
If \[\xymatrix@C=0.5cm{
  0 \ar[r] & L \ar[rr]^{\alpha} && N \ar[rr]^{\beta} && M \ar[r] & 0 }\] is an exact sequence of $\Lambda$-lattices then
\[\xymatrix@C=0.5cm{
  0 \ar[r] & L_k \ar[rr]^{\overline{\alpha}} && N_k \ar[rr]^{\overline{\beta}} && M_k \ar[r] & 0 }\] is an exact sequences of $\Lambda_k$-modules.
\end{remark}

It follows that if $\mcal{T}$ is a generalised tube of $\Lambda$-lattices then $\mcal{T}_k:=((M_i)_k,\overline{f_i},\overline{g_i})_{i\in\N_0}$ is a generalised tube of finitely presented $\Lambda_k$-modules.

Given a generalised tube $\mcal{T}=(M_i,f_i,g_i)_{i\in\N_0}$, define $\mcal{T}[\infty]$ to be the direct limit along the embeddings $g_i:M_i\rightarrow M_{i+1}$.

%

Recall that a module $M\in \Mod\text{-}S$ is \textbf{$\Sigma$-pure-injective} if $M^{(\kappa)}$ is pure-injective for every cardinal $\kappa$. Equivalently, \cite[4.4.5]{PSL}, $M$ is $\Sigma$-pure-injective if and only if $\pp^1_SM$ has the descending chain condition.

\begin{proposition}\label{gentubecatlatt}
Let $\mcal{T}=(M_i,f_i,g_i)_{i\in\N_0}$ be a generalised tube in $\Latt_\Lambda$. Then
\begin{enumerate}[(i)]
\item $\mcal{T}[\infty]$ is $R$-torsion-free and $R$-reduced,
\item $\mcal{T}[\infty]$ is not pure-injective,
\item for all $k\in\N$, $\mcal{T}[\infty]/\mcal{T}[\infty]\pi^k$ is $\Sigma$-pure-injective,
\item $\mcal{T}[\infty]^*$ is the pure-injective hull of $\mcal{T}[\infty]$.
\end{enumerate}
\end{proposition}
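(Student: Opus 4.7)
I plan to prove the parts in the order (i), (iii), (iv), (ii); the non-trivial case $M_1 \neq 0$ is implicit throughout (for a trivial tube all $M_i = 0$ and (ii) would fail). The key structural input is that each $g_i \colon M_i \to M_{i+1}$ is an injective $R$-pure embedding with $M_{i+1}/g_i(M_i) \cong M_1$. From the short exact sequence $0 \to M_i \to M_{i-1} \oplus M_{i+1} \to M_i \to 0$ supplied by the pushout-pullback (noted just before the proposition), the standard pushout identity gives $\coker g_i \cong \coker g_{i-1}$, and induction from $M_0 = 0$ yields $M_{i+1}/g_i(M_i) \cong M_1$. Injectivity of each $g_i$ follows by induction using $f_i g_i = g_{i-1} f_{i-1}$ together with joint injectivity of $(f_{i-1},g_i)$. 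Because $M_1$ is $R$-torsion-free, each $g_i$, and hence each $M_i \hookrightarrow \mcal{T}[\infty]$, is an $R$-pure embedding. From this, (i) is immediate: $R$-torsion-freeness is automatic, and any $x \in \bigcap_n \mcal{T}[\infty]\pi^n$ lies in some $M_i$ and hence in $\bigcap_n \pi^n M_i = 0$, since $M_i$ is $R$-free.

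For (iii), applying the functor $M \mapsto M/M\pi^k$ to the exact sequence preserves exactness (each $M_i$ is $R$-flat), making $\mcal{T}_k := (M_i/M_i\pi^k, \overline{f_i}, \overline{g_i})$ a generalised tube in $\mod\text{-}\Lambda_k$; since this functor commutes with direct limits, $\mcal{T}[\infty]/\mcal{T}[\infty]\pi^k \cong \mcal{T}_k[\infty]$. Because $\Lambda_k$ is an Artin algebra, Krause's theorem on the top of a generalised tube \cite{KrauseGeneric} gives $\Sigma$-pure-injectivity of $\mcal{T}_k[\infty]$, proving (iii). Part (iv) is then an immediate application of Theorem \ref{pihullquotientspi} to $\mcal{T}[\infty]$, whose hypotheses are exactly (i) and (iii).

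For (ii), I use (iv) to reduce to showing $\mcal{T}[\infty] \neq \mcal{T}[\infty]^*$, since a pure-injective module equals its own pure-injective hull. I would exhibit a non-convergent $\pi$-adic Cauchy sequence by iteratively defining $y_{k+1} := g_k(y_k) + \pi^k x_{k+1} \in M_{k+1}$, choosing each $x_{k+1}$ to represent a non-zero class in $M_{k+1}/(g_k(M_k) + \pi M_{k+1}) \cong M_1/\pi M_1$ (non-zero by Nakayama, since $M_1 \neq 0$). A putative limit $y \in M_j$, combined with $R$-purity of $M_k \hookrightarrow \mcal{T}[\infty]$ and the $R$-torsion-freeness of $M_k/g_{k-1}(M_{k-1}) \cong M_1$, would force $x_k \in g_{k-1}(M_{k-1}) + \pi M_k$ for every $k > j$, contradicting the choice. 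The main obstacle is this bookkeeping in (ii); the other three parts fall out cleanly from Theorem \ref{pihullquotientspi} and Krause's theorem.
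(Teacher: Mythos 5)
Your plan is correct, and parts (i), (iii), (iv) run essentially as in the paper: reduction mod $\pi^k$ sends the tube to a generalised tube over the Artin algebra $\Lambda_k$ and commutes with direct limits, Krause's theorem gives (iii), and (iv) is then Theorem \ref{pihullquotientspi} applied via (i) and (iii). The genuine divergence is part (ii). The paper's argument is much shorter: since each $g_i$ is a split monomorphism of $R$-modules (split precisely because its cokernel is a lattice --- the fact $\coker g_i\cong M_1$ that you prove from the pushout squares is exactly what the paper leaves implicit here), $\mcal{T}[\infty]\cong R^{(\aleph_0)}$ as an $R$-module; this is reduced, giving (i), and it is not pure-injective over $R$ because $R$ is not $\Sigma$-pure-injective over itself \cite[4.4.8]{PSL}, whence $\mcal{T}[\infty]$ is not pure-injective over $\Lambda$ since pure-injectivity descends along restriction of scalars. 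You instead propose to show the canonical map $\mcal{T}[\infty]\to\mcal{T}[\infty]^*$ is not surjective by building a non-convergent Cauchy sequence $y_{k+1}=g_k(y_k)+\pi^k x_{k+1}$ with $x_{k+1}$ nontrivial in $M_{k+1}/(g_k(M_k)+\pi M_{k+1})\cong M_1/\pi M_1$; this does work --- the contradiction step you gesture at goes through because the cokernels $M_m/\iota_{k-1,m}(M_{k-1})$ are iterated extensions of $M_1$, hence $R$-torsion-free, so the inclusions are $R$-pure and a putative limit at level $j<k$ forces $x_k\in g_{k-1}(M_{k-1})+\pi M_k$ --- but it is the one part you leave as a sketch and it is considerably more labour than the two-line restriction-of-scalars argument. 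In short: your route is sound and self-contained (and your explicit $\coker g_i\cong M_1$ lemma is a useful clarification), but for (ii) the $R$-module identification $\mcal{T}[\infty]\cong R^{(\aleph_0)}$ buys both reducedness and non-pure-injectivity simultaneously and avoids the Cauchy-sequence bookkeeping entirely.
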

\begin{proof}
(i) \& (ii): As a direct limit of lattices, $\mcal{T}[\infty]$ is $R$-torsion free. Since each $f_i$ is spilt when viewed as a homomorphism of $R$-modules, $\mcal{T}[\infty]$ is isomorphic to $R^{(\aleph_0)}$ as an $R$-module. So $\mcal{T}[\infty]$ is reduced. Since $R$ is not $\Sigma$-pure-injective as a module over itself, \cite[4.4.8]{PSL}, $R^{(\aleph_0)}$ is not pure-injective as an $R$-module and hence $\mcal{T}[\infty]$ is not pure-injective as an $\Lambda$-module.

(iii): Krause shows, \cite[8.3]{KrauseGeneric}, that if $\mcal{T}$ is a generalised tube in the category of finitely presented modules over an Artin algebra then $\mcal{T}[\infty]$ is $\Sigma$-pure-injective. Since Maranda's functor commutes with direct limits and sends generalised tubes to generalised tubes, if $\mcal{T}=(M_i,f_i,g_i)_{i\in\N_0}$ is a generalised tube in $\Latt_\Lambda$ then $\mcal{T}_k[\infty]=\mcal{T}[\infty]/\mcal{T}[\infty]\pi^k$. Thus $\mcal{T}[\infty]/\mcal{T}[\infty]\pi^k$ is $\Sigma$-pure-injective.

(iv): Follows directly from (i), (iii) and \ref{pihullquotientspi}.
\end{proof}


When $R$ is complete and $Q\Lambda$ is a separable $Q$-algebra, the category of $\Lambda$-lattices has almost split sequences (see \cite{RogSchARS}). A stable tube is an Auslander-Reiten component of the form $\Z A_{\infty}/\tau^n$ and we call $n$ the rank of the tube. Explicitly, a stable tube of rank $n$ has points $S_i[j]$ for $1\leq i\leq n$ and $j\in\N$.  We read the index $i$ $\mod \ n$. For all $i,j\in\N$, a stable tube has a single (trivially valued) arrow $S_i[j]\rightarrow S_i[j+1]$ and a single (trivially valued) arrow $S_i[j+1]\rightarrow S_{i+1}[j]$. We will identify the points with (the isomorphism type of) the $\Lambda$-lattice they represent. As for Artin algebras, generalised tubes can be constructed from stable tubes using the following two facts.

\begin{itemize}
\item If $A,B,C\in\Latt_\Lambda$ are indecomposable and pairwise non-isomorphic and, $u:A\rightarrow B$ and $v:A\rightarrow C$ are irreducible morphisms then there is $w:A\rightarrow D$ such that $(u \ v \ w)^T:A\rightarrow B\oplus C\oplus D$ is left minimal almost split.
\item If $u:S_i[j]\rightarrow S_{i}[j+1]$ is an irreducible map, $w:S_i[j]\rightarrow W$ and $W\in\Latt_\Lambda$ is indecomposable and is not isomorphic to any of $S_i[j],S_{i+1}[j-1],\ldots,S_{i+(j-1)}[1]$ then there exists $\gamma:S_i[j+1]\rightarrow W$ such that $w=\gamma u$.
\end{itemize}

Krause \cite[9.1]{KrauseGeneric} showed that if $\mcal{T}$ is a stable tube (of rank $n$) in the module category of an Artin algebra, with the labelling of modules as above, then for each $1\leq i\leq n$, the direct limit $\varinjlim S_i[j]$ is an indecomposable pure-injective. For stable tubes in categories of lattices we know, \ref{gentubecatlatt}, that $\oplus_{i=1}^n\varinjlim S_i[j]$ has pure-injective hull $(\oplus_{i=1}^n\varinjlim S_i[j])^*$. Hence, the pure-injective hull of $\varinjlim S_i[j]$ is $(\varinjlim S_i[j])^*$.  This raises the following question.

\begin{question}
Let $R$ be a complete discrete valuation domain with field of fractions $Q$ and let $\Lambda$ be an order $R$ such that $Q\Lambda$ is a separable $Q$-algebra. If $T$ is a direct limit up a ray of irreducible monomorphisms in a stable tube in $\Latt_\Lambda$ then is $T^*$ indecomposable?
\end{question}

We are able to answer this question positively for the $\widehat{\Z_2}$-order $\Gamma:=\widehat{\Z_2}C_2\times C_2$. The torsion-free part of the Ziegler spectrum of $\Gamma$ was described in \cite{TFpartkleinfour}. However, the points were not described as modules.

We start by explaining the set up. Let $e_1,e_2,e_3,e_4$ be the primitive orthogonal idempotents as in \cite{TFpartkleinfour}. Using these idempotents, Butler \cite{ButlerKleinfour}, defined a full functor $\Delta$ from the category of $b$-reduced $\Gamma$-lattices to the category of finite-dimensional vector spaces over $\mathbb{F}_2$ with $4$ distinguished subspaces. A $\widehat{\Z_2}$-torsion-free $\Gamma$-module $M$ is \textbf{$b$-reduced} if $M\cap Me_i=2Me_i$ for all $1\leq i\leq 4$. Note that, since $e_i\notin \widehat{\Z_2}C_2\times C_2$, $Me_i$ and $M2e_i$ are calculated inside $\widehat{\Q_2}M$. Puninski and Toffalori extended this functor to the category of $b$-reduced $\widehat{\Z_2}$-torsion-free modules and showed, \cite[5.4]{TFpartkleinfour}, that it is full on $\widehat{\Z_2}$-torsion-free $b$-reduced pure-injective $\Gamma$-modules.

Let $M$ be a $b$-reduced $\widehat{\Z_2}$-torsion-free $\widehat{\Z_2}C_2\times C_2$-module. Define $M^\star:=Me_1\oplus\ldots\oplus Me_4$. Then $\Delta(M):=(V;V_1,V_2,V_3,V_4)$ where $V:=M^\star/M$ and $V_i:=Me_i+M/M\cong Me_i/M\cap Me_i=Me_i/2Me_i$.

The category of finite-dimensional vector spaces over $\mathbb{F}_2$ with $4$ distinguished subspaces may be identified with a full subcategory of modules over the path algebra $\mathbb{F}_2\widetilde{D}_4$. The only indecomposable representations which are not in this full subcategory are the simple injective $\mathbb{F}_2\widetilde{D}_4$-modules. We will make this identification and consider $\Delta$ as a functor to $\Mod\text{-}\mathbb{F}_2\widetilde{D}_4$.

As observed by Puninski and Toffalori, just from the construction, one can see that $\Delta$ is an interpretation functor. Note that if $M$ is $b$-reduced and $\widehat{\Z_2}$-torsion-free then $\Delta(M)=0$ if and only if $M$ is $\widehat{\Z_2}$-divisible.

Dieterich, in \cite{ARquiversDieterich}, showed that $\Delta$ induced an isomorphism from the Auslander-Reiten quiver of $\mathbb{F}_2\widetilde{D}_4$ with all projective points removed and all simple injective modules removed and the Auslander-Reiten quiver of $\Latt_{\Gamma}$ restricted to the b-reduced lattices. Using this he was able, \cite[3.4]{ARquiversDieterich}, to compute the full Auslander-Reiten quiver of $\Latt_{\widehat{\Z_2}C_2\times C_2}$. Moreover, see the proof of \cite[2.2]{ARquiversDieterich} and \cite[3.4]{ARquiversDieterich}, $\Delta$ induces a bimodule isomorphism between $\text{Irr}_{\Latt_{\Gamma}}(M,L)$ and $\text{Irr}_{\mathbb{F}_2\widetilde{D}_4}(\Delta(M),\Delta(L))$ for all $L,M$ indecomposable $b$-reduced $\Gamma$-lattices. In particular, $\Delta$ sends irreducible morphisms between indecomposable $b$-reduced $\Gamma$-lattices to irreducible morphisms in $\mod\text{-}\mathbb{F}_2\widetilde{D}_4$. This implies that the Auslander-Reiten quiver of $\Latt_{\widehat{\Z_2}C_2\times C_2}$ has infinitely many stable tubes of rank $1$ and $3$ stable tubes of rank $2$ and $\Delta$ sends each stable tube in $\Latt_{\widehat{\Z_2}C_2\times C_2}$ to a stable tube in $\mod\text{-}\mathbb{F}_2\widetilde{D}_4$.

Keeping our notation as above, let $S_i[j]$ be the lattices in a stable tube of rank $n=1$ or $n=2$ in $\Latt_{\widehat{\Z_2}C_2\times C_2}$. Fix $1\leq i\leq n$ and for each $j\in\N$ let $w_j:S_i[j]\rightarrow S_i[j+1]$ be an irreducible map. Let $S_i[\infty]:=\varinjlim S_i[j]$ be the direct limit along the maps $w_j$. Then $\Delta S_i[\infty] =\varinjlim \Delta S_i[j]$ is pure-injective and indecomposable by \cite[9.1]{KrauseGeneric} since $\Delta$ sends stable tubes to stable tubes. Since $\Delta$ is full on pure-injective modules, by \cite[3.15 \& 3.16]{Interpretingmodules}\footnote{The proof of the required part of \cite[3.16]{Interpretingmodules} is a little unclear. Lemma \ref{onevariable} clears this up.}, it preserves pure-injective hulls. Thus $\Delta(S_i[\infty])\cong\Delta(S_i[\infty]^*)$. Since $S_i[\infty]^*$ is reduced and $\Delta(S_i[\infty]^*)$ is indecomposable, $S_i[\infty]^*$ is indecomposable.

So finally, for each quasi-simple $S$ at the base of a tube (i.e. $S_i[1]$ for some stable tube), the $S$-pr\"{u}fer point in \cite[6.1]{TFpartkleinfour} is $S[\infty]^*$ where $S[\infty]$ is the direct limit up a ray of irreducible monomorphisms starting at $S$.

The module $T$ in question 6.2 of \cite{TFpartkleinfour} is indecomposable but not pure-injective however its pure-injective hull is indecomposable (and $\widehat{\Z_2}$-reduced).

\section{Duality}\label{S-duality}

Throughout this section, let $R$ be a Dedekind domain, $Q$ its field of fractions, $\Lambda$ an $R$-order and $Q\Lambda$ a separable $Q$-algebra. The main aim of this section is to show that the lattice of open sets of $\Zg_\Lambda^{tf}$ is isomorphic to the lattice of open sets of ${_\Lambda}\Zg^{tf}$. We will also show, by other methods, that the m-dimension of $\pp^1_\Lambda(\Tf_{\Lambda})$ is equal to the m-dimension of ${_\Lambda}\pp^1({_\Lambda}\Tf)$ and that the Krull-Gabriel dimension of $(\Latt_\Lambda,\Ab)^{fp}$ is equal to the Krull-Gabriel dimension of $({_\Lambda}\Latt,\Ab)^{fp}$.

\subsection{Duality for the $R$-reduced part of $\Zg_{\Lambda}^{tf}$ when $R$ is a discrete valuation domain}\label{dualityordersoverdvr}
Throughout this subsection $R$ will be a discrete valuation domain, $k$ will be a natural number strictly greater than Maranda's constant for $\Lambda$ as an $R$-order and $I:\Tf_\Lambda\rightarrow \Mod\text{-}\Lambda_k$ (respectively $I:{_\Lambda}\Tf\rightarrow \Lambda_k\text{-}\Mod$) will be Maranda's functor.

Maranda's functor $I:\Tf_\Lambda\rightarrow \Mod\text{-}\Lambda_k$ is an interpretation functor. The kernel of $I$ is the definable subcategory of $R$-divisible modules. Since $Q\Lambda$ is separable, by \ref{divimpliesinj} and the discussion just below it, all indecomposable pure-injective modules in $\Tf_\Lambda$ are either $R$-reduced or $R$-divisible modules. When $\Lambda$ is an order over a discrete valuation domain $R$, we will write $\Zg^{rtf}_\Lambda$ for the subset of $R$-reduced modules in $\Zg_\Lambda^{tf}$.  We have shown in section \ref{SMarfun} that if $N,M\in\Tf_\Lambda$ are $R$-reduced and pure-injective then $IN\cong IM$ implies $N\cong M$ and that if $N$ is also indecomposable then so is $IN$. Thus \ref{superintZg} gives us the following theorem.

\begin{theorem}\label{Marhomeo}
The map which sends $N\in \Zg_\Lambda^{rtf}$ to $N/N\pi^k\in\Zg_{\Lambda_k}$ induces a homeomorphism onto its image which is closed.
\end{theorem}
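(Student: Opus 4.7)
The plan is to apply Proposition \ref{superintZg} to Maranda's functor $I: \Tf_\Lambda \to \Mod\text{-}\Lambda_k$, which has already been observed to be an interpretation functor. This reduces the statement to verifying the two algebraic hypotheses of \ref{superintZg} on $I$ restricted to indecomposable pure-injectives, and to identifying $\Zg(\Tf_\Lambda) \setminus \ker I$ with $\Zg_\Lambda^{rtf}$.

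First I would verify the isomorphism-reflecting condition: if $N, M \in \pinj(\Tf_\Lambda)$ with $IN, IM \neq 0$ and $IN \cong IM$, then $N \cong M$. Since $Q\Lambda$ is separable, by \ref{divimpliesinj} and the discussion following it every object of $\Tf_\Lambda$ decomposes as the direct sum of its $R$-divisible part and an $R$-reduced module, so any indecomposable pure-injective in $\Tf_\Lambda$ is either $R$-divisible (in which case $IN = 0$) or $R$-reduced. When $IN, IM \neq 0$ both $N$ and $M$ must be $R$-reduced, and Theorem \ref{marandapresisotype} then applies directly to give $N \cong M$.

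Next, the indecomposability-preservation condition: for $N \in \pinj(\Tf_\Lambda)$ one needs $IN = 0$ or $IN \in \pinj(\Mod\text{-}\Lambda_k)$. If $N$ is $R$-divisible then $IN = 0$; otherwise, by the decomposition above $N$ is $R$-reduced, and Theorem \ref{marandapresind} gives that $N/N\pi^k$ is indecomposable. Pure-injectivity of $N/N\pi^k$ follows from the general fact that any interpretation functor preserves pure-injectivity: pure-injectivity is equivalent to the splitting of a diagonal embedding into an ultrapower, a condition expressible purely in terms of direct products and direct limits, both of which $I$ commutes with. Hence $IN$ is an indecomposable pure-injective $\Lambda_k$-module whenever $IN \neq 0$.

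Finally, a point $N \in \Zg(\Tf_\Lambda)$ lies in $\ker I$ exactly when $N = N\pi^k$, and by iteration this forces $N = N\pi^{jk}$ for all $j$, so $N$ is $R$-divisible. Hence $\Zg(\Tf_\Lambda) \setminus \ker I = \Zg_\Lambda^{rtf}$, and Proposition \ref{superintZg} delivers the claimed homeomorphism from $\Zg_\Lambda^{rtf}$ onto its image in $\Zg_{\Lambda_k}$ together with the closedness of that image. The only delicate point is verifying that $I$ preserves pure-injectivity; everything else is a direct bookkeeping application of \ref{superintZg} in light of Theorems \ref{marandapresisotype} and \ref{marandapresind}.
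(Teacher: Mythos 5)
Your proposal is correct and follows essentially the same route as the paper: the paper likewise identifies $\ker I$ with the $R$-divisible modules, notes via \ref{divimpliesinj} that points of $\Zg_\Lambda^{tf}$ are either $R$-divisible or $R$-reduced, and then feeds Theorems \ref{marandapresisotype} and \ref{marandapresind} into Proposition \ref{superintZg}. Your extra remark that interpretation functors preserve pure-injectivity is a point the paper uses only implicitly (e.g.\ in the proof of \ref{defsubcatunderIinv}), so making it explicit is harmless and accurate.
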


In theory, the above theorem could be used to give a description of $\Zg_\Lambda^{rtf}$ and hence $\Zg_\Lambda^{tf}$ based on a description of $\Zg_{\Lambda_k}$. But, as explained in the introduction to this paper, $\Zg_{\Lambda_k}$ is generally much more complicated than $\Zg_\Lambda^{tf}$.

Based on Prest's duality for pp formulas, Ivo Herzog defined a lattice isomorphism between the lattice of open subsets of $\Zg_S$ and the lattice of open subsets of ${_S}\Zg$.

\begin{theorem}\cite{Herzogduality}
There is a lattice isomorphism $D$ between that lattice of open subsets of $\Zg_S$ (respectively ${_S}\Zg$) and the lattice of open subsets of ${_S}\Zg$ (respectively ${_S}\Zg$) which is given on basic open sets by  \[\left(\phi/\psi\right)\mapsto \left(D\psi/D\phi\right)\] for $\phi,\psi$ pp-$1$-formulas. Moreover $D^2$ is the identity map.
\end{theorem}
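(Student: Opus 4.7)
The plan is to deploy Prest's elementary duality $D:\pp_S^n\to{_S}\pp^n$, which the paper has already recalled as a lattice anti-isomorphism satisfying $D^2=\text{Id}$. The candidate assignment $(\phi/\psi)\mapsto(D\psi/D\phi)$ on basic open sets is well-typed because $\phi\geq\psi$ forces $D\psi\geq D\phi$, so $(D\psi/D\phi)$ really is a pp-pair on the left-module side. The task is then to upgrade this to a well-defined lattice isomorphism on arbitrary open sets and to observe that the resulting map is self-inverse.

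First I would pass from the topological side to the functor category. The Ziegler--Herzog reconstruction theorem identifies the lattice of open subsets of $\Zg_S$ with the lattice of Serre subcategories of the finitely presented functor category $(\mod\text{-}S,\Ab)^{fp}$, via $U\mapsto\{F:F(M)=0\text{ for all }M\in\pinj_S\setminus U\}$. Under this identification a basic open $(\phi/\psi)$ corresponds to the Serre subcategory generated by the coherent pp-pair functor $F_{\phi/\psi}:=F_\phi/F_\psi$. The analogous correspondence holds for ${_S}\Zg$ and $(S\text{-}\mod,\Ab)^{fp}$. It thus suffices to exhibit a lattice isomorphism between Serre subcategories on either side which carries the class of $F_{\phi/\psi}$ to that of $F_{D\psi/D\phi}$.

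The crucial ingredient is the Auslander--Gruson--Jensen/Herzog anti-equivalence of abelian categories $(\mod\text{-}S,\Ab)^{fp}\simeq\bigl((S\text{-}\mod,\Ab)^{fp}\bigr)^{\text{op}}$, built from the tensor--Hom adjunction between $\mod\text{-}S$ and $S\text{-}\mod$. On coherent functors attached to pp pairs this anti-equivalence is computed by Prest's formula-level duality, sending $F_{\phi/\psi}$ to $F_{D\psi/D\phi}$. Since an anti-equivalence bijectively preserves Serre subcategories, composing with the Ziegler--Herzog correspondences on both sides produces the desired lattice isomorphism on open subsets, and $D^2=\text{Id}$ is inherited from the corresponding identity on $\pp_S^1$.

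The main obstacle is precisely this categorical set-up: one must produce the anti-equivalence of $(\mod\text{-}S,\Ab)^{fp}$ with the opposite of $(S\text{-}\mod,\Ab)^{fp}$ and verify that on pp-pair functors it is given by the stated formula. This requires the defect formula relating coherent functors to pp formulas together with a careful check that Prest's formula-level duality is the shadow of the categorical anti-equivalence. Once this is in place, well-definedness on arbitrary opens, preservation of finite joins and meets, and the involution property $D^2=\text{Id}$ are formal consequences of the Ziegler--Herzog correspondence and the anti-equivalence.
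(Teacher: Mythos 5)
The paper does not prove this statement at all: it is quoted verbatim from Herzog's paper \cite{Herzogduality}, so there is no internal argument to compare against. Your sketch is, however, a correct route to the result and is essentially the modern functor-category proof: the lattice of open subsets of $\Zg_S$ is identified with the lattice of Serre subcategories of $(\mod\text{-}S,\Ab)^{fp}$ (Herzog/Krause), the Auslander--Gruson--Jensen/Herzog duality gives an anti-equivalence $(\mod\text{-}S,\Ab)^{fp}\simeq \bigl((S\text{-}\mod,\Ab)^{fp}\bigr)^{\textnormal{op}}$ which on pp-pair functors sends $F_{\phi/\psi}$ to $F_{D\psi/D\phi}$, and an anti-equivalence induces an inclusion-preserving bijection of Serre subcategories; the involutivity comes from $D^2=\textnormal{Id}$ at the level of pp formulas. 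This differs from Herzog's original argument, which stays at the level of formulas: he shows directly, using elementary duality together with compactness of the basic open sets, that an inclusion $(\phi/\psi)\subseteq\bigcup_{i}(\phi_i/\psi_i)$ holds if and only if the dual inclusion $(D\psi/D\phi)\subseteq\bigcup_i(D\psi_i/D\phi_i)$ holds, which makes the assignment well defined on arbitrary opens without invoking the functor-category machinery. Your route buys conceptual clarity at the cost of two substantial imported theorems (the open-set/Serre-subcategory correspondence and the identification of the categorical duality with $D$ on pp pairs); since the statement is itself a cited theorem, leaning on these is legitimate, but you should cite them explicitly rather than treat them as routine, as they carry essentially all of the content.
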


It is unknown if this lattice isomorphism is always induced by a homeomorphism.

If $X$ is a closed subset of $\Zg_S$ then we will write $DX$ for ${_S}\Zg\backslash D(\Zg_S\backslash X)$. Since closed subsets of $\Zg_S$ are in correspondence with the definable subcategories of $\Mod\text{-}S$, this isomorphism also defines an inclusion preserving bijection between the definable subcategories of $\Mod\text{-}S$ and $S\text{-}\Mod$. If $\mcal{X}\subseteq \Mod\text{-}S$ is a definable subcategory then we will write $D\mcal{X}$ for the corresponding definable subcategory of $S\text{-}\Mod$.

Herzog's duality can be applied to closed subspaces of $\Zg_S$ as follows. Let $X$ be a closed subset of $\Zg_S$. Open subsets of  $\Zg_S$ containing $\Zg_S\backslash X$ are in bijective correspondence with open subsets of $X$ equipped with the subspace topology via the map $U\mapsto U\cap X$. If $U$ is an open subset of $\Zg_S$ containing $\Zg_S\backslash X$ then $DU$ is an open subset of ${_S}\Zg$ containing ${_S}\Zg\backslash DX$. Thus $D$ induces a lattice isomorphism between the lattice of open sets of $X$ and the lattice of open sets of $DX$ both equipped with the appropriate subspace topology.

Herzog's isomorphism $D$ sends the definable subcategory $\Tf_\Lambda$ to the definable subcategory of $R$-divisible $\Lambda$-modules. Thus, directly applying Herzog's duality does not give an isomorphism between the open subsets of $\Zg_\Lambda^{tf}$ and ${_\Lambda}\Zg^{tf}$. With this in mind, we instead use the right module version of Maranda's functor $I$ to move to $\Mod\text{-}\Lambda_k$, we then apply $D$ there and then use the left module version of Maranda's functor to move back to ${_\Lambda}\Tf$. This will give us an isomorphism between the lattice of open subsets of $\Zg_\Lambda^{rtf}$ and ${_\Lambda}\Zg^{rtf}$.

%

Our first step is to show that $\langle I\Tf_\Lambda\rangle=D\langle I{_\Lambda}\Tf\rangle$.

The contravariant functor
\[\Hom_R(-,R):\Mod\text{-}\Lambda\rightarrow \Lambda\text{-}\Mod\] induces an equivalence between the category of right $\Lambda$-lattices and the opposite of the category of left $\Lambda$-lattices, see \cite[sect. IX 2.2]{rogordersII}. If $M$ is right $\Lambda$-lattice, denote the left $\Lambda$-lattice,  $\Hom_R(M,R)$ by $M^{\dagger}$.

The ring $\Lambda/\pi^n\Lambda$ is a $R/\pi^nR$-Artin algebra. For all $S$-Artin algebras $\mcal{A}$, there is a duality between $\mod\text{-}\mcal{A}$ and $\mcal{A}\text{-}\mod$ given by $\Hom_S(-,E)$ where $E$ is the injective hull of $S/\rad(S)$. We will write $M^*$ for $\Hom(M,E)$. If $S=R/\pi^nR$ then $S/\rad(S)=R/\pi R$ and $E=R/\pi^nR$.

We will now show that if $L$ is a right $\Lambda$-lattice then $(IL)^*=IL^\dagger$.

\begin{lemma}\label{connectstardagger}
If $M$ is a right $\Lambda$-lattice and $n\in\N$ then
\[\Hom_R(M,R)/\pi^n\Hom_R(M,R)\cong \Hom_{R/\pi^n}(M/M\pi^n,R/\pi^nR).\]
\end{lemma}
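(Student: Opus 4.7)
The plan is to argue via the short exact sequence $0\to R\xrightarrow{\pi^n}R\to R/\pi^nR\to 0$ and the freeness of $M$ over $R$.

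First I would note that, since $M$ is a $\Lambda$-lattice and $R$ is a discrete valuation domain, $M$ is finitely generated and $R$-torsion-free, and therefore free as an $R$-module. Consequently $\Hom_R(M,-)$ is an exact functor. Applying $\Hom_R(M,-)$ to the short exact sequence
\[
0\longrightarrow R\xrightarrow{\,\pi^n\,} R\longrightarrow R/\pi^nR\longrightarrow 0
\]
yields a short exact sequence of abelian groups
\[
0\longrightarrow \Hom_R(M,R)\xrightarrow{\,\pi^n\,}\Hom_R(M,R)\longrightarrow \Hom_R(M,R/\pi^nR)\longrightarrow 0,
\]
which gives a canonical isomorphism
\[
\Hom_R(M,R)/\pi^n\Hom_R(M,R)\;\xrightarrow{\;\sim\;}\;\Hom_R(M,R/\pi^nR).
\]

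Next I would identify the right-hand side with $\Hom_{R/\pi^n}(M/M\pi^n,R/\pi^nR)$. Any $R$-linear map $f:M\to R/\pi^n R$ sends $M\pi^n$ into $(R/\pi^nR)\pi^n=0$, so factors uniquely through the projection $M\twoheadrightarrow M/M\pi^n$; conversely every map out of $M/M\pi^n$ lifts to one out of $M$. Since $R/\pi^n R$ is annihilated by $\pi^n$, an $R$-linear map from an $R/\pi^n$-module to $R/\pi^n R$ is the same as an $R/\pi^n$-linear map. This is exactly the hom-tensor identification
\[
\Hom_R(M,R/\pi^nR)\cong \Hom_{R/\pi^n}(M\otimes_R R/\pi^nR,\,R/\pi^nR),
\]
and $M\otimes_R R/\pi^nR\cong M/M\pi^n$.

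Finally I would compose these two canonical isomorphisms to obtain the claimed isomorphism. It is worth checking that the resulting map is natural in $M$ and compatible with the right $\Lambda$-action (so that both sides agree as left $\Lambda_k$-modules), but this follows immediately from the functoriality of both constructions. No step is really an obstacle here; the only thing to be careful about is keeping track of the side on which $\Lambda$ acts, since $\Hom_R(M,R)$ is a left $\Lambda$-module via the right $\Lambda$-action on $M$, and the identifications above respect this action.
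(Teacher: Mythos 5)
Your proof is correct and is essentially the paper's own argument in homological packaging: the paper defines the canonical map $\Phi(f)=\overline{f}$ directly, gets surjectivity from projectivity of $M$ as an $R$-module and computes the kernel by dividing values by $\pi^n$, which is precisely what your application of the exact functor $\Hom_R(M,-)$ (using freeness of $M$ over the discrete valuation domain $R$) to $0\to R\xrightarrow{\pi^n}R\to R/\pi^nR\to 0$, combined with the change-of-rings identification $\Hom_R(M,R/\pi^nR)\cong\Hom_{R/\pi^n}(M/M\pi^n,R/\pi^nR)$, delivers. Your closing check of the left $\Lambda$-action corresponds to the paper's observation that $\Phi$ is a homomorphism of left $\Lambda$-modules, so nothing is missing.
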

\begin{proof}
For $f\in \Hom_R(M,R)$, let $\overline{f}:M/M\pi^n\rightarrow R/\pi^nR \in \Hom_{R/\pi^nR}(M/M\pi^n,R/\pi^nR)$ be the homomorphism which sends $m+M\pi^n$ to $f(m)+\pi^nR$.

Let $\Phi:\Hom_R(M,R)\rightarrow \Hom_{R/\pi^n}(M/M\pi^n,R/\pi^nR)$ be defined by $\Phi(f)=\overline{f}$. It is clear that $\Phi$ is a homomorphism of left $\Lambda$-modules. Since $M$ is projective as an $R$-module, $\Phi$ is surjective.

If $\Phi(f)=0$ then for all $m\in M$, $\pi^n|f(m)$. For all $m\in M$, let $g(m)\in M$ be such that $g(m)\pi^n=f(m)$. Since $M$ is $R$-torsion-free, the choice of $g(m)$ is unique. From this is follows easily that $g$ is a homomorphism of $R$-modules. Thus, if $\Phi(f)=0$ then $f\in\pi^n\Hom_R(M,R)$.
\end{proof}

The next remark follows from the fact, see \cite[1.3.13]{PSL} for instance, that if $\mcal{A}$ is an Artin algebra, $\phi/\psi$ is a pp-pair and $M$ is a finite length $\mcal{A}$-module then $\phi(M)=\psi(M)$ if and only if $D\phi(M^*)=D\psi(M^*)$.

\begin{remark}\label{dualityartindefsubcats}
Suppose that $\mcal{A}$ is an Artin algebra and $\{M_i \st i\in I\}$ is a set of finite length right $\mcal{A}$-modules. Then
\[D\langle M_i\st i\in I\rangle = \langle M_i^*\st i\in I\rangle.\]
\end{remark}

\begin{lemma}\label{leftimrightimmaranda}
The following equalities hold.
\begin{eqnarray}
  \langle I\Tf_{\Lambda}\rangle &=& \langle IL\st L \text{ is an indecomposable right } \Lambda \text{-lattice}\rangle \\
   &=& \langle IM^{\dagger}\st M \text{ is an indecomposable left } \Lambda \text{-lattice}\rangle \\
   &=& \langle (IM)^*\st M \text{ is an indecomposable left } \Lambda \text{-lattice}\rangle \\
   &=& D\langle IM\st M \text{ is an indecomposable left } \Lambda \text{-lattice}\rangle \\
   &=& D\langle I{_\Lambda}\Tf\rangle
\end{eqnarray}
\end{lemma}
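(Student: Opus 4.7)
The plan is to establish the five equalities in turn. Equalities (1) and (5) are the ``reduction to indecomposable lattices'' steps for the right and left theories respectively, while (2)--(4) chain together Butler's duality $\dagger$, the link \ref{connectstardagger} between $\dagger$ and the Artin algebra duality $*$, and the formula in \ref{dualityartindefsubcats} derived from Herzog's duality $D$.

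For (1), since $I$ is an interpretation functor it commutes with direct limits, and $\Tf_\Lambda = \langle \Latt_\Lambda\rangle$ means every $M\in\Tf_\Lambda$ is a direct limit of $\Lambda$-lattices $L_i$; thus $IM = \varinjlim IL_i \in \langle I\Latt_\Lambda\rangle$, giving $\langle I\Tf_\Lambda\rangle = \langle I\Latt_\Lambda\rangle$. Next, since each $\Lambda$-lattice $L$ is finitely generated over the Noetherian ring $\Lambda$, any direct-sum decomposition of $L$ has only finitely many nonzero summands, so iterated splitting yields a finite indecomposable decomposition $L = L_1\oplus\ldots\oplus L_n$. Applying $I$ gives $IL = \bigoplus_i IL_i$, which lies in $\langle IL' : L' \text{ indec. right lattice}\rangle$. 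This proves (1); identity (5) follows by the identical argument on the left.

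The functor $\dagger = \Hom_R(-,R)$ restricts to a bijection between isomorphism classes of indecomposable right and indecomposable left $\Lambda$-lattices via $M\mapsto M^\dagger$, so the two generating classes in (2) literally coincide. For (3), apply the left-lattice analogue of \ref{connectstardagger} to each indecomposable left $\Lambda$-lattice $M$ to obtain the right $\Lambda_k$-module isomorphism $IM^\dagger \cong (IM)^*$; the generating classes of (2) and (3) therefore match. For (4), each $IM$ is a finitely generated left module over the Artin algebra $\Lambda_k = \Lambda/\pi^k\Lambda$, hence of finite length, so \ref{dualityartindefsubcats} directly yields $D\langle IM\rangle = \langle (IM)^*\rangle$.

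The main subtlety lies in the reduction step of (1) and (5): since Krull--Schmidt need not hold for $\Lambda$-lattices when $R$ is not complete, one cannot appeal to uniqueness of indecomposable decompositions. Fortunately, only existence of a finite indecomposable decomposition is needed, and this follows from finite generation over the Noetherian ring $\Lambda$. Once this point is cleared, the remaining equalities are a purely formal chaining of the three dualities.
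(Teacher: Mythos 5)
Your proof is correct and takes essentially the same route as the paper's (the paper's own proof is just a terser version of the same chain: reduction to indecomposable lattices via direct limits and finite indecomposable decompositions, then $L\mapsto L^{\dagger}$, then \ref{connectstardagger}, then \ref{dualityartindefsubcats}), with your remark that only existence, not uniqueness, of indecomposable decompositions is needed being a worthwhile clarification since $R$ need not be complete. One cosmetic point: $\dagger=\Hom_R(-,R)$ is the standard $R$-dual of lattices (cited to Roggenkamp), not Butler's functor, which in this paper refers to the four-subspace functor $\Delta$.
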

\begin{proof}

\noindent
\begin{enumerate}
\item This holds because all $N\in \Tf_{\Lambda}$ are direct limits of $\Lambda$-lattices, all $\Lambda$-lattices are direct sums of indecomposable $\Lambda$-lattices and $I$ commutes with direct limits.
\item For all (right) $\Lambda$-lattices $L^{\dagger\dagger}\cong L$ and $L^{\dagger}$ is a (left) $\Lambda$-lattice.
\item \ref{connectstardagger}
\item \ref{dualityartindefsubcats}
\item Same as (1).
\end{enumerate}
\end{proof}

Herzog's duality $D$ gives an isomorphism from the lattice of open sets of $\Zg(\langle I\Tf_{\Lambda}\rangle)$ to the lattice of open sets of $\Zg(D\langle I\Tf_{\Lambda}\rangle)$. By \ref{leftimrightimmaranda}, $D\langle I\Tf_{\Lambda}\rangle=\langle I{_\Lambda}\Tf\rangle$.

If $U$ is an open subset of $\Zg_\Lambda^{rtf}$ (resp. ${_\Lambda}\Zg^{rtf}$) then write $IU$ for the set of all $IN$ where $N\in U$.

\begin{definition}
Let $U$ be an open subset of $\Zg_\Lambda^{rtf}$. Define
\[dU:=\{N\in {_\Lambda}\Zg^{rtf} \st IN\in DIU \}. \]
\end{definition}

By \ref{Marhomeo}, $IU$ is an open subset of $\Zg(\langle I\Tf_{\Lambda}\rangle)$. So $DIU$ is an open subset of $\Zg(\langle I{_\Lambda}\Tf\rangle)$. Again by \ref{Marhomeo}, the set of $N\in {_\Lambda}\Zg^{rtf}$ such that $IN\in DIU$ is an open subset of ${_\Lambda}\Zg^{rtf}$.

\begin{proposition}\label{lattopenisoonreducedpoints}
The map $d$ between the lattice of open sets of $\Zg_\Lambda^{rtf}$ and ${_\Lambda}\Zg^{rtf}$ is a lattice isomorphism.
\end{proposition}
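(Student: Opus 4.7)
The plan is to exhibit $d$ as a composition of three lattice isomorphisms: the one induced by the right module Maranda functor, Herzog's duality (restricted to an appropriate subspace), and the inverse of the one induced by the left module Maranda functor.

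First I would unpack Theorem~\ref{Marhomeo} together with the proof of \ref{superintZg}: since $\ker I$ (on $\Tf_\Lambda$) is exactly the definable subcategory of $R$-divisible modules and since every point of $\Zg_\Lambda^{tf}$ is either $R$-reduced or $R$-divisible, one has $I\Zg_\Lambda^{rtf}=\langle I\Tf_\Lambda\rangle\cap\Zg_{\Lambda_k}=\Zg(\langle I\Tf_\Lambda\rangle)$ and $I$ is a homeomorphism from $\Zg_\Lambda^{rtf}$ onto this closed subset of $\Zg_{\Lambda_k}$. The same holds on the left, with $I$ restricting to a homeomorphism ${_\Lambda}\Zg^{rtf}\to\Zg(\langle I{_\Lambda}\Tf\rangle)$. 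Hence $U\mapsto IU$ and $V\mapsto I^{-1}V$ are mutually inverse lattice isomorphisms between the lattice of open sets of $\Zg_\Lambda^{rtf}$ (resp.\ ${_\Lambda}\Zg^{rtf}$) and the lattice of open sets of $\Zg(\langle I\Tf_\Lambda\rangle)$ (resp.\ $\Zg(\langle I{_\Lambda}\Tf\rangle)$), both equipped with their subspace topology inherited from $\Zg_{\Lambda_k}$ (resp.\ ${_{\Lambda_k}}\Zg$).

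Next I would apply Herzog's duality. It sends the lattice of open sets of $\Zg_{\Lambda_k}$ isomorphically onto the lattice of open sets of ${_{\Lambda_k}}\Zg$, and, as recalled in the excerpt, when restricted to open sets containing the complement of a given closed set $X\subseteq \Zg_{\Lambda_k}$ it induces a lattice isomorphism between the open sets of $X$ (subspace topology) and the open sets of $DX$. Taking $X=\Zg(\langle I\Tf_\Lambda\rangle)$, Lemma~\ref{leftimrightimmaranda} identifies $DX$ with $\Zg(\langle I{_\Lambda}\Tf\rangle)$, so $W\mapsto DW$ is a lattice isomorphism between the open sets of $\Zg(\langle I\Tf_\Lambda\rangle)$ and the open sets of $\Zg(\langle I{_\Lambda}\Tf\rangle)$.

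Finally, composing the three isomorphisms $U\mapsto IU\mapsto DIU\mapsto \{N\in {_\Lambda}\Zg^{rtf}\st IN\in DIU\}$ gives precisely the map $d$ of the proposition. Since each component is a lattice isomorphism, so is $d$. The only step requiring genuine care is the middle one: ensuring that Herzog's duality really does descend to a lattice isomorphism between the lattices of open sets of the two closed subspaces $\Zg(\langle I\Tf_\Lambda\rangle)$ and $\Zg(\langle I{_\Lambda}\Tf\rangle)$ of $\Zg_{\Lambda_k}$ and ${_{\Lambda_k}}\Zg$ respectively, which is exactly what the identification $D\langle I\Tf_\Lambda\rangle=\langle I{_\Lambda}\Tf\rangle$ from \ref{leftimrightimmaranda} is designed to deliver. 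Everything else is straightforward bookkeeping using \ref{Marhomeo}.
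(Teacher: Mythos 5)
Your proposal is correct and is essentially the paper's own argument: it composes the homeomorphism of \ref{Marhomeo} (right-hand Maranda functor), Herzog's duality restricted to the closed subspaces via the identification $D\langle I\Tf_\Lambda\rangle=\langle I{_\Lambda}\Tf\rangle$ from \ref{leftimrightimmaranda}, and the inverse of the left-hand version of \ref{Marhomeo}. The extra care you flag about Herzog's duality descending to the closed subspaces is exactly the point the paper handles with the same lemma, so nothing is missing.
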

\begin{proof}
The homeomorphism from \ref{Marhomeo} sends an open subset $U$ of $\Zg_\Lambda^{rtf}$ to $IU\subseteq \Zg(\langle I\Tf_\Lambda\rangle)$. So the map sending $U$ to $IU$ is a lattice isomorphism. By \ref{leftimrightimmaranda}, Herzog's duality gives a lattice isomorphism between the open subsets of $\Zg(\langle I\Tf_\Lambda\rangle)$ and the lattice of open subset of  $\Zg(\langle I{_\Lambda}\Tf\rangle)$. Thus the map which sends an open subset $U$ of $\Zg_\Lambda^{rtf}$ to $DIU\subseteq \Zg(\langle I{_\Lambda}\Tf\rangle)$ is a lattice isomorphism. Finally the inverse of the of the homeomorphism from \ref{Marhomeo} sends an open subset of $W\subseteq \Zg(\langle I{_\Lambda}\Tf\rangle)$ to the set of all $N\in {_\Lambda}\Zg^{rtf}$ such that $IN\in W$. So this map is also a lattice isomorphism. Since $d$ is the composition of these three lattice isomorphisms, $d$ is also a lattice isomorphism.
%
%
\end{proof}

If $\Lambda$ is an order over a complete discrete valuation domain then the $\Lambda$-lattices are pure-injective (see \cite[2.2]{TfpartZgorders} for instance). When $R$ is not complete, we can instead consider the lattices over the $\widehat{R}$-order $\widehat{\Lambda}$. Then the  $\widehat{\Lambda}$-lattices are pure-injective as $\widehat{\Lambda}$-modules and hence also as $\Lambda$-modules. Moreover, if $L$ is an indecomposable $\widehat{\Lambda}$-lattice then, since $L$ is $R$-reduced, $L$ is also indecomposable as a $\Lambda$-module (see \cite[Remark 1 p 1130]{TFpartZgRG} for a proof over group rings that also works in our context).

\begin{proposition}
Let $R$ be a discrete valuation domain and $\Lambda$ an $R$-order. If $L$ is an indecomposable right $\widehat{\Lambda}$-lattice then for all open sets $U\subseteq \Zg_\Lambda^{rtf}$, $L\in U$ if and only if $L^{\dagger}\in dU$ where $L^\dagger:=\Hom_{\widehat{R}}(L,\widehat{R})$.
\end{proposition}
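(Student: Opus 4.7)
The plan is to trace $L$ and its dual $L^\dagger$ through the three lattice isomorphisms whose composition defines $d$: Maranda's homeomorphism of Theorem \ref{Marhomeo}, Herzog's duality restricted to give $\Zg(\langle I\Tf_\Lambda\rangle) \to \Zg(\langle I{_\Lambda}\Tf\rangle)$ via Lemma \ref{leftimrightimmaranda}, and the inverse of the left-module Maranda homeomorphism.

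First I would check that $IL$ is a finite-length indecomposable right $\Lambda_k$-module. Since $\widehat{\Lambda}/\pi^k\widehat{\Lambda} \cong \Lambda_k$ is an Artin algebra and $L$ is finitely generated over $\widehat{\Lambda}$, $IL = L/L\pi^k$ has finite length; indecomposability is the classical Maranda theorem applied over the complete discrete valuation domain $\widehat{R}$. The same applies to the left $\widehat{\Lambda}$-lattice $L^\dagger$. Next I would show $IL^\dagger \cong (IL)^*$ by adapting the proof of Lemma \ref{connectstardagger}: replacing $R$ by $\widehat{R}$ throughout gives $L^\dagger/L^\dagger\pi^k \cong \Hom_{\widehat{R}/\pi^k\widehat{R}}(L/L\pi^k,\widehat{R}/\pi^k\widehat{R})$, and the canonical identification $\widehat{R}/\pi^k\widehat{R} \cong R/\pi^k R$ turns the right-hand side into $(IL)^*$.

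The core ingredient is the pointwise description of Herzog's duality on finite-length indecomposables over the Artin algebra $\Lambda_k$: for any finite-length $M \in \mod\text{-}\Lambda_k$ and any pp-pair $\phi/\psi$, $\phi(M) = \psi(M)$ if and only if $D\phi(M^*) = D\psi(M^*)$ (standard elementary duality on finite-length modules over an Artin algebra, essentially the fact already invoked before Remark \ref{dualityartindefsubcats}). Applied to $M = IL$ this gives $IL \in (\phi/\psi)$ iff $(IL)^* \in (D\psi/D\phi)$ for every basic open of $\Zg_{\Lambda_k}$.

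Putting the pieces together: $L \in U$ iff (by Theorem \ref{Marhomeo}) $IL \in IU$; iff (by the pointwise duality applied to a basic-open cover of $IU$ inside $\Zg_{\Lambda_k}$, combined with the correspondence between open subsets of $\Zg(\langle I\Tf_\Lambda\rangle)$ and $\Zg(\langle I{_\Lambda}\Tf\rangle)$ via their extensions in $\Zg_{\Lambda_k}$ and $\Zg_{\Lambda_k^{op}}$) $(IL)^* \in DIU$; iff (by the isomorphism $IL^\dagger \cong (IL)^*$) $IL^\dagger \in DIU$; iff (by the definition of $d$) $L^\dagger \in dU$. The main obstacle is the pointwise identity in the previous paragraph: Herzog's duality is a priori only a lattice isomorphism of open sets rather than a map on points, so one must exploit the finite length of $IL$ over the Artin algebra $\Lambda_k$ to obtain a pointwise reading.
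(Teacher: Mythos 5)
Your proposal is correct and follows essentially the same route as the paper's proof: pass through Maranda's homeomorphism, use the pointwise form of elementary duality on the finite-length module $IL$ over the Artin algebra $\Lambda_k$ (the fact cited before Remark \ref{dualityartindefsubcats}, i.e.\ \cite[1.3.13]{PSL}), identify $(IL)^*\cong IL^\dagger$ via Lemma \ref{connectstardagger}, and conclude from the definition of $d$. Your extra remark that \ref{connectstardagger} must be read over $\widehat{R}$ (using $\widehat{R}/\pi^k\widehat{R}\cong R/\pi^kR$) is a sensible refinement of a point the paper passes over silently, but it does not change the argument.
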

\begin{proof}
First note that $IL$ is finite-length as a $\Lambda_k$-module. Since $\Lambda_k$ is an Artin algebra, if $M\in \Zg(\langle I\Tf_{\Lambda}\rangle)$ is finite-length then for all open subsets $U$ of $\Zg(\langle I\Tf_{\Lambda}\rangle)$, $M\in U$ if and only if $M^*\in DU$, see \cite[1.3.13]{PSL}. So, if $L$ is an indecomposable right $\widehat{\Lambda}$-lattice then $L\in U$ if and only if $IL\in IU$ and $IL\in IU$ if and only if $(IL)^*\in DIU$. By \ref{connectstardagger}, $(IL)^*=IL^\dagger$, so $(IL)^*\in DIU$ if and only if $L^\dagger \in dU$. So $L\in U$ if and only if $L^\dagger\in dU$.
\end{proof}

\subsection{Duality for $\Zg_\Lambda^{tf}$}

We now work to extend \ref{lattopenisoonreducedpoints} in two ways concurrently. We extend the isomorphism to an isomorphism between the lattices of open subsets of $\Zg_\Lambda^{tf}$ and ${_\Lambda}\Zg^{tf}$ and we extend the statement to the case where $R$ is a Dedekind domain.

In order to do this we need to recall some key features of $\Zg_\Lambda^{tf}$ from \cite{TfpartZgorders}.

As explained in \cite[Section 3]{TfpartZgorders}, for each $P\in\text{Max}R$, the canonical homomorphism $\Lambda\rightarrow \Lambda_P$ induces, via restriction of scalars, an embedding of $\Zg^{tf}_{\Lambda_P}$ into $\Zg^{tf}_{\Lambda}$ and the image of this embedding is closed. We identify $\Zg^{tf}_{\Lambda_P}$ with its image. Moreover, for all $N\in\Zg_\Lambda^{tf}$, there exists a $P\in\text{Max}R$ such that $N\in \Zg^{tf}_{\Lambda_P}$. So
\[\Zg_{\Lambda}^{tf}=\bigcup_{P\in\text{Max}R}\Zg_{\Lambda_P}^{tf}.\] Finally, if $N\in\Zg_{\Lambda_P}$ for all $P\in\text{Max}R$ then $N$ is $R$-divisible and hence may be viewed as a module over $Q\Lambda$. Since $Q\Lambda$ is separable, hence semi-simple, all indecomposable $R$-divisible modules, when viewed as $Q\Lambda$-modules, are simple.

For each $P\in\text{Max}R$, let $P|x$ denote the pp formula $\exists y_1,\ldots,y_n \ x=\sum_{i=1}^ny_ir_i$ where $r_1,\ldots r_n$ generate $P$. In all $\Lambda$-modules $M$, $P|x$ defines the subset $MP$. If $P,P'\in\text{Max}R$ are not equal then $\left(x=x/P|x\right)\cap\left(x=x/P'|x\right)$ is empty. For all $N\in\Zg_\Lambda^{tf}$, either $N$ is $R$-divisible or $N\in\left(x=x/P|x\right)$ for some $P\in\text{Max}R$. So

\[\Zg_\Lambda^{tf}=\Zg_{Q\Lambda}\cup\bigcup_{P\in\text{Max}R}\left(x=x/P|x\right).\]

Note that $\left(x=x/P|x\right)=\Zg^{tf}_{\Lambda_P}\backslash\Zg_{Q\Lambda}$. Under the assumption that $Q\Lambda$ is a semi-simple $Q$-algebra, this means that $\left(x=x/P|x\right)$ is the set of $R_P$-reduced indecomposable pure-injective $\Lambda_P$-modules. For this reason, we will write $\Zg^{rtf}_{\Lambda_P}$ for this set. Note that this notation matches that of the previous section when $\Lambda$ is an order over a discrete valuation domain.

\begin{theorem}\label{ZgTF}
Let $R$ be a Dedekind domain with field of fractions $Q$, and $\Lambda$
an $R$-order such that $Q\Lambda$ is semisimple. If $N \in \Zg_\Lambda^{tf}$, then either
\begin{itemize}
\item $N$ is a simple $A$-module, or
\item there is some maximal ideal $P$ of $R$ such that $N \in \Zg^{tf}_{\widehat{\Lambda_P}}$ and $N$ is $R_P$-reduced.
\end{itemize}
Moreover, if $N\in\Zg^{tf}_{\widehat{\Lambda_P}}$ is $R_P$-reduced then $N\in \Zg_\Lambda^{tf}$.
\end{theorem}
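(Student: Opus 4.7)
The plan is to combine the two decompositions
\[
\Zg_\Lambda^{tf}=\bigcup_{P\in\text{Max}R}\Zg_{\Lambda_P}^{tf}
\quad\text{and}\quad
\Zg_{\Lambda_P}^{tf}=\Zg_{Q\Lambda}\cup\Zg_{\Lambda_P}^{rtf}
\]
recalled just before the statement with the observation that each $N\in\Zg_{\Lambda_P}^{rtf}$ is in fact a point of $\Zg_{\widehat{\Lambda_P}}^{tf}$. Given $N\in\Zg_\Lambda^{tf}$, one of two things happens: either $N\in\Zg_{Q\Lambda}$, in which case the semisimplicity of $Q\Lambda$ forces the indecomposable $Q\Lambda$-module $N$ to be simple; or there is some $P\in\text{Max}R$ with $N\in\Zg_{\Lambda_P}^{rtf}$. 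The content of the theorem is thus concentrated in promoting such an $N$ to a point of $\Zg_{\widehat{\Lambda_P}}^{tf}$, and in verifying the converse.

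For the reduced case, the first step is to produce a canonical $\widehat{R_P}$-action on $N$ extending the $R_P$-action. Restriction of scalars along $R_P\hookrightarrow\Lambda_P$ preserves pure-injectivity, since any finite $R_P$-linear system of equations on elements of $N$ is a fortiori a $\Lambda_P$-linear one. An $R_P$-reduced $R_P$-torsion-free pure-injective $R_P$-module is algebraically compact and reduced in the $P$-adic topology, so for each $s\in\widehat{R_P}$ and $n\in N$ there is a unique element $s\cdot n\in N$ satisfying $s\cdot n-s_k n\in NP^k$ for every Cauchy approximation $s_k\in R_P$ of $s$; this defines the desired $\widehat{R_P}$-scalar multiplication. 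Since $\widehat{\Lambda_P}\cong\widehat{R_P}\otimes_{R_P}\Lambda_P$, the commuting $\widehat{R_P}$- and $\Lambda_P$-actions amalgamate to a $\widehat{\Lambda_P}$-structure on $N$ extending the original $\Lambda_P$-action. I expect the most delicate point to be checking that $N$ remains pure-injective and indecomposable over $\widehat{\Lambda_P}$: for indecomposability, the uniqueness of the $\widehat{R_P}$-action just constructed forces every $\Lambda_P$-linear endomorphism of $N$ to be automatically $\widehat{\Lambda_P}$-linear, so the two endomorphism rings coincide and locality passes upwards; for pure-injectivity, one approximates matrices over $\widehat{\Lambda_P}$ by matrices over $\Lambda_P$ modulo $P^k$ and uses pure-injectivity and reducedness over $\Lambda_P$ to glue approximate solutions into a genuine solution of the original $\widehat{\Lambda_P}$-system.

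The ``moreover'' direction runs the same machinery in reverse. Given $N\in\Zg^{tf}_{\widehat{\Lambda_P}}$ that is $R_P$-reduced, restriction of scalars along $\Lambda\to\widehat{\Lambda_P}$ preserves pure-injectivity (same trivial transfer of finite linear systems) and preserves $R$-torsion-freeness (since $R$ embeds in $\widehat{R_P}$). Indecomposability as a $\Lambda$-module follows from the remark immediately preceding Proposition~\ref{lattopenisoonreducedpoints}: every $\Lambda$-endomorphism of $N$ is automatically $\widehat{\Lambda_P}$-linear by the same uniqueness argument, so $\End_\Lambda(N)=\End_{\widehat{\Lambda_P}}(N)$ is local. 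Hence $N\in\Zg_\Lambda^{tf}$, as required.
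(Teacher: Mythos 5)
A preliminary point: the paper does not actually prove Theorem \ref{ZgTF} — it is stated as one of the ``key features of $\Zg_\Lambda^{tf}$'' recalled from \cite{TfpartZgorders}, so there is no internal proof to compare yours against. Judged on its own terms, your argument is the standard one from that literature (compare \cite{TFpartZgRG}) and is essentially correct: the decomposition $\Zg_\Lambda^{tf}=\bigcup_P\Zg_{\Lambda_P}^{tf}$ together with the divisible/reduced dichotomy reduces everything to promoting an $R_P$-reduced point of $\Zg^{tf}_{\Lambda_P}$ to the completion, and your $P$-adic construction of the $\widehat{R_P}$-action (existence from algebraic compactness over $R_P$, uniqueness from $\bigcap_k N\pi^k=0$) is exactly the right mechanism; the same uniqueness gives commutation with the $\Lambda_P$-action and with $\Lambda_P$-linear endomorphisms, which also drives the ``moreover'' direction.

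A few points to tighten. First, you never check that $N$ is $\widehat{R_P}$-torsion-free, which is part of $N\in\Zg^{tf}_{\widehat{\Lambda_P}}$; it is immediate because $\pi$ is also a uniformizer of $\widehat{R_P}$, but it should be said. Second, pure-injectivity over $\widehat{\Lambda_P}$ is only sketched; the sketch does work and can be written cleanly: given a finitely solvable system of linear equations over $\widehat{\Lambda_P}$ with parameters in $N$, replace each equation $\sum_j x_ja_j=b$ by the congruences $\sum_j x_ja_j^{(k)}-b\in N\pi^k$ for all $k\in\N$, where $a_j^{(k)}\in\Lambda_P$ is congruent to $a_j$ modulo $\pi^k\widehat{\Lambda_P}$; the resulting system over $\Lambda_P$ is finitely solvable, algebraic compactness of $N$ over $\Lambda_P$ gives a simultaneous solution, and reducedness turns the congruences back into equalities. (For the forward direction, indecomposability over $\widehat{\Lambda_P}$ is in fact automatic, since any $\widehat{\Lambda_P}$-decomposition is already a $\Lambda_P$-decomposition, so your endomorphism-ring argument is only needed in the ``moreover'' direction.) Third, in that direction you should note that a $\Lambda$-linear endomorphism of $N$ is automatically $R_P$-linear (elements of $R\setminus P$ act invertibly) before the congruence argument shows it is $\widehat{R_P}$-, hence $\widehat{\Lambda_P}$-linear; and the remark you invoke occurs just after \ref{lattopenisoonreducedpoints} and concerns only $\widehat{\Lambda}$-lattices, but since you reprove the relevant fact for arbitrary reduced points this is merely a citation slip. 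None of these affects the soundness of the approach.
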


This theorem means that if $Q\Lambda$ is separable then the $R_P$-reduced points of $\Zg_\Lambda^{tf}$ can be identified with the $\widehat{R_P}$-reduced (equivalently $R_P$-reduced) points of $\Zg^{tf}_{\widehat{\Lambda_P}}$. Following \cite{TFpartZgRG}, it is shown in \cite[3.2]{TfpartZgorders} that the topology on the set of $R_P$-reduced points of $\Zg_\Lambda^{tf}$ is the same whether it is viewed as a subspace of $\Zg_{\Lambda_P}^{tf}$ or $\Zg^{tf}_{\widehat{\Lambda_P}}$. Thus we may identify $\Zg^{rtf}_{\Lambda_P}$ and $\Zg^{rtf}_{\widehat{\Lambda_P}}$.

We have already mentioned in Section \ref{dualityordersoverdvr} that a $\widehat{\Lambda_P}$-lattice is pure-injective. Therefore the restrictions of indecomposable $\widehat{\Lambda_P}$-lattices to $\Lambda$ are points in $\Zg_\Lambda^{tf}$.

From now on, if $P\in\text{Max}R$ then let $d_P$ denote the isomorphism between the lattice of open subsets of $ \Zg_{\Lambda_P}^{rtf}$ and of ${_{\Lambda_P}}\Zg^{rtf}$ induced by $d$ for $\Lambda_P$. Patching the $d_P$ together as $P\in\text{Max}R$ varies will give us an isomorphism between the open subset of $\bigcup_{P\in\text{Max}R}\Zg_{\Lambda_P}^{rtf}\subseteq \Zg_\Lambda^{tf}$ and the open subsets of $\bigcup_{P\in\text{Max}R}{_{\Lambda_P}}\Zg^{rtf}\subseteq {_\Lambda}\Zg^{tf}$. Thus, we just need to know what to do with open subsets which contain $R$-divisible points.

Let $e_1,\ldots,e_n$ be a complete set of centrally primitive orthogonal idempotents for $Q\Lambda$. For each $1\leq i\leq n$, $e_iQ\Lambda$ is isomorphic as a right $Q\Lambda$-module to $S_i^{(\alpha_i)}$ for some simple right $Q\Lambda$-module $S_i$ and if $S_i\cong S_j$ then $i=j$.

\begin{lemma}\cite[2.7]{TfpartZgorders}\label{closedpoints}
Let $N\in\Zg_\Lambda^{tf}$ and $S\in\Zg_{Q\Lambda}$. If $S$ is a direct summand of $QN$ then $S$ is in the closure of $N$. In particular, if $N$ is a closed point in $\Zg_\Lambda^{tf}$ then $N\in\Zg_{Q\Lambda}$.
\end{lemma}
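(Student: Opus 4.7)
The plan is to prove the first assertion by showing that the simple $Q\Lambda$-module $S$ lies in the smallest definable subcategory of $\Mod\text{-}\Lambda$ containing $N$, which I will denote $\langle N \rangle$. By the correspondence between closed subsets of $\Zg_\Lambda$ and definable subcategories of $\Mod\text{-}\Lambda$, the set $\pinj(\langle N \rangle)$ is exactly the closure of $\{N\}$ in $\Zg_\Lambda$; since $\Zg_\Lambda^{tf}$ is closed in $\Zg_\Lambda$, the closure of $\{N\}$ in $\Zg_\Lambda^{tf}$ coincides with this set.

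First I would observe that $QN$ lies in $\langle N \rangle$. Because $R$ is central in $\Lambda$, the $\Lambda$-module $QN = Q\otimes_R N$ is naturally a right $\Lambda$-module, and writing $Q = \varinjlim_{r\in R\setminus\{0\}} r^{-1}R$ as a directed colimit along the divisibility ordering on $R\setminus\{0\}$ gives $QN = \varinjlim_r N$, a direct limit of copies of $N$ with multiplication-by-$r$ maps. Since definable subcategories are closed under direct limits by \ref{defsubcatdef}(ii), $QN\in \langle N\rangle$. Next, a direct summand is a pure submodule, and definable subcategories are closed under pure submodules, so from the hypothesis $S\mid QN$ I conclude $S\in\langle N\rangle$.

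Now I would verify that $S$ is a legitimate point of $\Zg_\Lambda^{tf}$. Since $Q\Lambda$ is separable, hence semisimple, $S\in\Zg_{Q\Lambda}$ is a simple $Q\Lambda$-module; as a $\Lambda$-module it is automatically $R$-torsion-free and $R$-divisible, so by \ref{divimpliesinj} it is injective (hence pure-injective) as a $\Lambda$-module, and it is indecomposable because it is already simple over the overring $Q\Lambda$. Thus $S\in\pinj(\langle N\rangle)\cap \Zg_\Lambda^{tf}$, which is precisely the closure of $\{N\}$ in $\Zg_\Lambda^{tf}$.

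For the ``in particular'' clause, suppose $N$ is a closed point in $\Zg_\Lambda^{tf}$, so $\mathrm{cl}(\{N\}) = \{N\}$. Since $N\in\Tf_\Lambda$ is $R$-torsion-free and non-zero, the canonical map $N\to QN$ is injective and $QN\neq 0$; as $Q\Lambda$ is semisimple, $QN$ splits off some simple $Q\Lambda$-module $S\in \Zg_{Q\Lambda}$. By the first part $S\in \mathrm{cl}(\{N\}) = \{N\}$, so $N=S\in \Zg_{Q\Lambda}$. The only mildly delicate point in the whole argument is the description of $QN$ as a direct limit of copies of $N$, but this is routine once one exploits that $R$ sits centrally in $\Lambda$; no serious obstacle appears.
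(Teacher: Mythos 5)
Your proof is correct. Note that the paper does not prove this lemma itself—it is imported from \cite[2.7]{TfpartZgorders}—so there is no internal argument to compare against; what you give is the natural self-contained proof: the Ziegler closure of $\{N\}$ is $\pinj(\langle N\rangle)$ (and lies in $\Zg_\Lambda^{tf}$ since that set is closed and contains $N$), the module $QN=Q\otimes_RN$ is a directed colimit of copies of $N$ along multiplication by elements of the central subring $R$, hence belongs to $\langle N\rangle$ by \ref{defsubcatdef}(ii), and the summand $S$ is a pure submodule of $QN$, so $S\in\langle N\rangle$; the ``in particular'' clause then follows since $N\neq 0$ is $R$-torsion-free, so $QN\neq 0$ has a simple $Q\Lambda$-summand by semisimplicity. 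Two small points deserve to be made explicit. First, indecomposability of $S$ as a $\Lambda$-module does not follow verbatim from simplicity over the overring: you should note that a $\Lambda$-direct summand of $S$ is $R$-divisible (being a quotient of $S$) and $R$-torsion-free (being a submodule), hence is a $Q$-subspace and therefore a $Q\Lambda$-submodule, so any $\Lambda$-decomposition of $S$ is a $Q\Lambda$-decomposition and is trivial. Second, your appeal to \ref{divimpliesinj} is made in a section where $R$ is a discrete valuation domain, whereas the lemma lives in the Dedekind setting; this is harmless here because the fact that simple $Q\Lambda$-modules are points of $\Zg_\Lambda^{tf}$ is already part of the paper's standing framework (see the discussion around \ref{ZgTF}), but if you want the verification to be self-contained you should either invoke the Dedekind version from the cited source or observe directly that pure-injectivity over $Q\Lambda$ restricts to pure-injectivity over $\Lambda$.
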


\begin{lemma}
Let $D$ be a Dedekind domain with field of fractions $Q$ and let $\Lambda$ be an order over $D$ such that $Q\Lambda$ is semisimple. Let $e\in Q\Lambda$ be a centrally primitive idempotent, $S$ the simple right $Q\Lambda$-module corresponding to $e$ and suppose that $d\in D$ is such that $ed\in\Lambda$. The following are equivalent for all $N\in\Zg_\Lambda^{tf}$.
\begin{enumerate}
\item $N\in\left(xd(1-e)=0/x=0\right)$
\item $S$ is a direct summand of $QN$
\item $S$ is in the closure of $N$
\end{enumerate}
\end{lemma}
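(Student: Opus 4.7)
The equivalence $(2)\Leftrightarrow(3)$ is exactly the first sentence of Lemma \ref{closedpoints} (together with the observation that simple $Q\Lambda$-modules are closed points of $\Zg_{Q\Lambda}$ and hence of $\Zg_\Lambda^{tf}$), so the plan is to establish $(1)\Leftrightarrow(2)$. The key book-keeping point is that although $e\notin \Lambda$ in general, we have $d(1-e)=d-ed\in\Lambda$ because $ed\in\Lambda$ by hypothesis, so the pp-formula $xd(1-e)=0$, read as $x(d-ed)=0$, makes sense over $\Lambda$.

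The plan is to translate both conditions into a statement about the submodule $N(ed)$ of $N$. On the module side, I would first observe that, since $N$ is $R$-torsion-free, $N$ embeds in $QN$ and
\[
QNe \;=\; Q\otimes_R N(ed),
\]
(using $e=\tfrac{1}{d}\otimes ed$ in $Q\Lambda$). Because $Q\Lambda$ is semisimple and $e$ is the centrally primitive idempotent attached to $S$, the simple $S$ is a direct summand of $QN$ if and only if $QNe\neq 0$, and, again by $R$-torsion-freeness of $N$, this is equivalent to $N(ed)\neq 0$. So $(2)$ is equivalent to the existence of $n\in N$ with $n(ed)\neq 0$.

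On the formula side, $n\in(xd(1-e)=0)(N)$ says exactly $nd=n(ed)$. For $(1)\Rightarrow(2)$, if such an $n\neq 0$ exists then $nd\neq 0$ by torsion-freeness, so $n(ed)\neq 0$, giving $N(ed)\neq 0$. For $(2)\Rightarrow(1)$, pick $n\in N$ with $n(ed)\neq 0$ and set $m:=n(ed)$. Using $e^2=e$ and centrality of $e$ in $Q\Lambda$,
\[
m(ed)\;=\;n(ed)(ed)\;=\;n\,ed^{2}\;=\;n(ed)\cdot d\;=\;md,
\]
so $md(1-e)=0$ and $m\neq 0$, placing $N$ in the basic open set $(xd(1-e)=0/x=0)$.

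I do not anticipate any real obstacle; the only subtlety is the one already flagged, namely checking that $d(1-e)\in\Lambda$ so that the pp-formula is admissible, and then being careful to exploit both the centrality of $e$ (to commute $e$ and $d$) and the $R$-torsion-freeness of $N$ (to move freely between $N$ and $QN$ and to conclude $n\neq 0$ from $nd\neq 0$). After assembling $(1)\Leftrightarrow(2)$ in this way, one appends the citation to \ref{closedpoints} to complete the cycle.
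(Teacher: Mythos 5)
Your argument for $(1)\Leftrightarrow(2)$ is correct (and essentially the same content as the paper's $(1)\Rightarrow(2)$: note $d(1-e)\in\Lambda$, use centrality of $e$ and $R$-torsion-freeness, and identify ``$S$ is a summand of $QN$'' with $QNe\neq 0$, i.e.\ $N(ed)\neq 0$). The gap is in your treatment of $(3)$. Lemma \ref{closedpoints} gives only the implication $(2)\Rightarrow(3)$: \emph{if} $S$ is a direct summand of $QN$ \emph{then} $S$ is in the closure of $N$. It does not give the converse, and your parenthetical remark that simple $Q\Lambda$-modules are closed points does not supply it either: knowing that $\overline{\{S\}}=\{S\}$ says nothing about which points $N$ have $S$ in their closure. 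So as written you have proved $(1)\Leftrightarrow(2)\Rightarrow(3)$ but nothing back from $(3)$, and the three conditions are not yet shown equivalent.

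The missing step is short but genuinely needed, and it is how the paper closes the cycle, proving $(3)\Rightarrow(1)$ directly: since $e$ acts as the identity on $S$ (equivalently $eQ\Lambda(1-e)d=0$), every element of $S$ satisfies $xd(1-e)=0$ and $S\neq 0$, so $S$ itself lies in the basic open set $\left(xd(1-e)=0/x=0\right)$. Now $S\in\overline{\{N\}}$ means that every open set containing $S$ contains $N$; applying this to the open set above yields $N\in\left(xd(1-e)=0/x=0\right)$, which is $(1)$. Adding this paragraph repairs your proof; alternatively you could phrase it as $(3)\Rightarrow(2)$ by combining it with your $(1)\Rightarrow(2)$, but some such argument from $(3)$ must appear.
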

\begin{proof}
$(1)\Rightarrow (2)$ Suppose $md(1-e)=0$ and $m\neq 0$. Then, as an element of $QN$ viewed as a $Q\Lambda$-module, $m(1-e)=0$. Thus $m=me$. The kernel of the  homomorphism from $Q\Lambda$ to $QN$ sending $1$ to $m$ contains $(1-e)Q\Lambda$ and thus induces a non-zero homomorphism from $eQ\Lambda$ to $QN$. Thus $S$ is a submodule and hence a direct summand of $QN$.

$(2)\Rightarrow (3)$ This is \ref{closedpoints}.

$(3)\Rightarrow (1)$ Suppose $S$ is in the closure of $N$. Since $eQ\Lambda(1-e)d=0$, $S \in \left(xd(1-e)=0/x=0\right)$. Thus $N\in\left(xd(1-e)=0/x=0\right)$.
%
%
\end{proof}

Note that the above shows that the set of points specialising to a closed point in $\Zg_\Lambda^{tf}$ is an open set. For $S\in \Zg_{Q\Lambda}$, we will write $\mcal{V}(S)$ for the open set of points whose closure contains $S$.

\goodbreak
\begin{cor}\label{canformopenset}
Let $U$ be an open subset of $\Zg_{\Lambda}^{tf}$. Then

\[U=\bigcup_{P}(U\cap \Zg^{rtf}_{\Lambda_P})\cup\bigcup_{S\in \lambda(U)}\mcal{V}(S)\] where $\lambda(U):=\{S\in\Zg_{Q\Lambda} \st S \in U\}$.

\end{cor}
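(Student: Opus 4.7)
The plan is to prove both inclusions separately, using Theorem \ref{ZgTF} (the structural description of points of $\Zg_\Lambda^{tf}$) for the forward containment and the definition of $\mcal{V}(S)$ together with the fact that $U$ is open for the reverse.

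For the inclusion $U \subseteq \bigcup_{P}(U\cap \Zg^{rtf}_{\Lambda_P})\cup\bigcup_{S\in \lambda(U)}\mcal{V}(S)$, I would pick $N \in U$ and split into cases according to Theorem \ref{ZgTF}. If $N$ is a simple $Q\Lambda$-module, then $N \in \lambda(U)$, and since $N$ belongs to its own closure, $N \in \mcal{V}(N)$, placing $N$ in the second union. Otherwise, there is some $P \in \text{Max}R$ with $N \in \Zg^{tf}_{\widehat{\Lambda_P}}$ and $N$ is $R_P$-reduced, and by the identification $\Zg^{rtf}_{\widehat{\Lambda_P}} = \Zg^{rtf}_{\Lambda_P}$ discussed just after Theorem \ref{ZgTF}, we have $N \in U \cap \Zg^{rtf}_{\Lambda_P}$.

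For the reverse inclusion, the first piece is immediate since $U \cap \Zg^{rtf}_{\Lambda_P} \subseteq U$ for each $P$. For the second piece, take $S \in \lambda(U)$ and $N \in \mcal{V}(S)$; by definition $S \in \overline{\{N\}}$, meaning every open set containing $S$ also contains $N$. Since $U$ is open and contains $S$, we conclude $N \in U$, so $\mcal{V}(S) \subseteq U$.

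I do not anticipate a genuine obstacle here: the corollary is essentially a bookkeeping consequence of Theorem \ref{ZgTF} together with the topological content of the preceding lemma, which guarantees that $\mcal{V}(S)$ is itself open and that a closed point $S$ lies in the closure of $N$ precisely when $S$ is a summand of $QN$. The only point requiring mild care is making sure the identification $\Zg^{rtf}_{\Lambda_P} = \Zg^{rtf}_{\widehat{\Lambda_P}}$ (and the resulting subspace topology) is invoked correctly when the order $\Lambda$ is over a Dedekind domain rather than a discrete valuation domain.
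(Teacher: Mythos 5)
Your proposal is correct and follows essentially the same route as the paper: the forward inclusion is the case split on points of $\Zg_\Lambda^{tf}$ (simple $Q\Lambda$-module versus $R_P$-reduced point of some $\Zg^{rtf}_{\Lambda_P}$) together with $S\in\mcal{V}(S)$, and the reverse inclusion uses that $N\in\mcal{V}(S)$ means $S$ lies in the closure of $N$, so the open set $U$ containing $S$ must contain $N$. No gaps.
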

\begin{proof}
If $N\in \Zg_{\Lambda}^{tf}$ then either $N\in\Zg^{rtf}_{\Lambda_P}$ for some $P\in \text{Max}R$ or $N\in\Zg_{Q\Lambda}$. So, since for all $S\in \Zg_{Q\Lambda}$, $S\in\mcal{V}(S)$, $U\subseteq \bigcup_{P}(U\cap \Zg^{rtf}_{\Lambda_P})\cup\bigcup_{S\in \lambda(U)}\mcal{V}(S)$.

Suppose $S\in\lambda(U)$ and $N\in\mcal{V}(S)$. Then $S$ is in the closure of $N$. Hence $N\in U$. Thus $\mcal{V}(S)\subseteq U$. So $U\supseteq \bigcup_{P}(U\cap \Zg^{rtf}_{\Lambda_P})\cup\bigcup_{S\in \lambda(U)}\mcal{V}(S)$.
\end{proof}

For each simple $Q\Lambda$-module $S$, we now consider where to send the open set $\mcal{V}(S)$. In particular, we need to calculate the image of $\mcal{V}(S)\cap\Zg^{rtf}_{\Lambda_P}$ under $d_P$ for each $P\in\text{Max}R$.

\begin{lemma}\label{daggerdivpoints}
Let $R$ be a discrete valuation domain and $\Lambda$ an $R$-order. For all $M\in\Latt_\Lambda$, $Q\Hom_R(M,R)$ and $\Hom_Q(MQ,Q)$ are isomorphic as $Q\Lambda$-modules.
\end{lemma}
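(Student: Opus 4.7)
The plan is to exhibit an explicit natural $Q\Lambda$-module isomorphism. Define
\[\Phi : Q\otimes_R \Hom_R(M,R) \longrightarrow \Hom_Q(Q\otimes_R M, Q)\]
by sending a pure tensor $q\otimes f$ to the $Q$-linear map $q'\otimes m \mapsto qq'f(m)$, using the inclusion $R\subseteq Q$ to view $f(m)$ as an element of $Q$. First I would verify that $\Phi$ is well-defined by checking that the formula is $R$-bilinear and $R$-balanced, so that it descends from the tensor product.

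Next I would check that $\Phi$ intertwines the two left $Q\Lambda$-actions. Recall that $\Hom_R(M,R)$ is a left $\Lambda$-module via $(\lambda\cdot f)(m) = f(m\lambda)$, and likewise $\Hom_Q(MQ,Q)$ is a left $Q\Lambda$-module via $(\lambda\cdot g)(x) = g(x\lambda)$. Then for $\lambda\in\Lambda$,
\[\Phi\bigl(\lambda\cdot(q\otimes f)\bigr)(q'\otimes m) = qq'f(m\lambda) = \Phi(q\otimes f)(q'\otimes m\lambda) = \bigl(\lambda\cdot\Phi(q\otimes f)\bigr)(q'\otimes m),\]
and this extends to all of $Q\Lambda$ by $Q$-linearity.

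Finally I would argue that $\Phi$ is bijective. Since $R$ is a DVR, hence a PID, and $M\in\Latt_\Lambda$ is finitely generated and $R$-torsion-free, $M$ is free as an $R$-module, say $M\cong R^n$. Then $\Hom_R(M,R)\cong R^n$, so $Q\otimes_R\Hom_R(M,R)\cong Q^n$, and $Q\otimes_R M\cong Q^n$, so $\Hom_Q(Q\otimes_R M,Q)\cong Q^n$ as $Q$-vector spaces. Picking an $R$-basis of $M$ and tracing $\Phi$ through the dual basis shows that $\Phi$ is a $Q$-vector space isomorphism, hence a $Q\Lambda$-module isomorphism.

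There is no serious obstacle; the only care required is the bookkeeping in the second step to ensure that the left actions on the contravariant Hom-spaces are matched correctly, and the verification that $\Phi$ is the standard (and hence compatible) localisation-of-dual map so that the isomorphism claim in the third step is automatic from $R\to Q$ being a flat localisation together with $M$ being finitely presented.
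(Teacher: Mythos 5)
Your proposal is correct and follows essentially the same route as the paper: both exhibit the canonical map from the localised dual to the dual of the localisation and then conclude it is an isomorphism from finite-rank considerations (the paper counts $Q$-dimensions and uses injectivity, while you invoke $R$-freeness of $M$ and a dual-basis computation, which amounts to the same thing). No gaps; the bookkeeping of the left $\Lambda$-actions you flag is exactly the only point needing care, and you handle it correctly.
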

\begin{proof}
Let $\Delta:\Hom_R(M,R)\rightarrow \Hom_Q(MQ,Q)$ be defined by setting $\Delta(f)(m\cdot q)=f(m)\cdot q$ for all $m\in M$ and $q\in Q$. A quick computation shows that for all $f\in \Hom_R(M,R)$, $\Delta(f)$ is a well-defined element of $\Hom_Q(MQ,Q)$ and $\Delta$ is an injective homomorphism of left $\Lambda$-modules. Since $\Hom_Q(MQ,Q)$ is $Q$-divisible, $\Delta$ extends to an injective homomorphism $\Delta'$ from $Q\Hom_R(M,R)$ to $\Hom_Q(MQ,Q)$.

Suppose that $M$ is rank $n$. Then $\dim_Q MQ=\dim_Q\Hom_Q(MQ,Q)=\dim_Q Q\Hom_R(M,R)=n$. Thus $\Delta'$ is an injective homomorphism between two $n$-dimensional $Q$-vector spaces and hence is surjective.
\end{proof}

\begin{lemma}\label{opensubsetsforsimples}
Let $R$ be a discrete valuation domain. Let $L\in \text{Latt}_\Lambda$, $e$ a central idempotent of $Q\Lambda$ and $d\in R$ be such that $ed\in \Lambda$. Then $L\in\left(x(e-1)d=0/x=0\right)$ if and only if $L^\dagger\in\left((e-1)dx=0/x=0\right)$.
\end{lemma}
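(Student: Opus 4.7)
The plan is to interpret both sides module-theoretically via the simple $Q\Lambda$-module associated to $e$, and then use \ref{daggerdivpoints} together with $Q$-linear duality to swap sides.

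First, I would observe that the argument used for $(1)\Leftrightarrow(2)$ of the earlier lemma (the one characterising $(xd(1-e)=0/x=0)$ on $\Zg_\Lambda^{tf}$) only used that the module is $R$-torsion-free, not that it is an indecomposable pure-injective. Applied to $L\in\Latt_\Lambda$, which embeds in $QL$, it shows that $L\in(x(e-1)d=0/x=0)$ holds if and only if the simple right $Q\Lambda$-module $S$ associated to the central idempotent $e$ is a direct summand of $QL$. The forward direction is the same computation: $m(e-1)d=0$ in $L$ forces $m(e-1)=0$ in $QL$, giving a nonzero map $eQ\Lambda\to QL$ and hence an embedding of $S$; the reverse direction uses that any embedding $S\hookrightarrow QL$ yields $m\in QL$ with $m(1-e)=0$, which after clearing denominators by a nonzero element of $R$ lands in $L$.

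The symmetric statement, applied on the left, gives that $L^\dagger\in((e-1)dx=0/x=0)$ holds if and only if the simple left $Q\Lambda$-module $S^\vee$ associated to $e$ is a direct summand of $QL^\dagger$. Now \ref{daggerdivpoints} supplies a $Q\Lambda$-isomorphism
\[
QL^\dagger\;\cong\;\Hom_Q(QL,Q),
\]
so the question reduces to showing that $S$ is a direct summand of $QL$ if and only if $S^\vee$ is a direct summand of $\Hom_Q(QL,Q)$.

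For this, I would appeal to the $Q$-linear duality $\Hom_Q(-,Q)\colon \mod\text{-}Q\Lambda\to Q\Lambda\text{-}\mod$, which is an exact contravariant equivalence on finite-dimensional modules and therefore sends direct summands to direct summands and simples to simples. Because $e$ is \emph{central}, the idempotent acts on $S$ from either side by the identity, and hence acts on $\Hom_Q(S,Q)$ by the identity from the left; thus $\Hom_Q(S,Q)$ is the unique (up to isomorphism) simple left $Q\Lambda$-module on which $e$ acts as the identity, i.e.\ $\Hom_Q(S,Q)\cong S^\vee$. Combining the four steps gives the biconditional. The only subtle point — and the one I would be most careful about — is precisely this identification of the $Q$-dual of the right simple attached to $e$ with the left simple attached to $e$; once centrality of $e$ is used, this is routine.
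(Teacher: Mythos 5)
Your argument is correct, but it is packaged differently from the paper's. The paper proves the forward implication directly: given $0\neq a\in QL$ with $a(e-1)=0$, it uses the decomposition $QL=QLe\oplus QL(e-1)$ (with $QLe\neq 0$) to write down an explicit functional $f\in\Hom_Q(QL,Q)$ vanishing on $QL(e-1)$ and nonzero on $QLe$, so that $(e-1)\cdot f=0$; it then transports $f$ through the isomorphism of \ref{daggerdivpoints} and clears denominators to land in $L^\dagger$, the converse being symmetric via $L^{\dagger\dagger}\cong L$. You instead reduce both membership conditions to the occurrence of the simple module attached to $e$ in $QL$, respectively $QL^\dagger$ (supplying your own denominator-clearing converse rather than the closure argument of \ref{closedpoints}, which is fine), and then invoke the contravariant equivalence $\Hom_Q(-,Q)$ on finite-dimensional $Q\Lambda$-modules together with \ref{daggerdivpoints} to match the right simple at $e$ with the left simple at $e$. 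What your route buys is a symmetric treatment of both directions at once and a conceptual explanation (duality of semisimple categories preserves simples); what the paper's route buys is self-containment and full generality: the lemma is stated for an arbitrary central idempotent $e$, while ``the simple $S$ associated to $e$'' only makes sense when $e$ is centrally primitive, which you tacitly assume. This is a cosmetic rather than genuine gap --- for general central $e$ your equivalences should read ``$QLe\neq 0$'' in place of ``$S$ is a direct summand of $QL$'', and then $e\Hom_Q(QL,Q)\cong\Hom_Q(QLe,Q)$ finishes the argument --- and in the paper's only application, \ref{dPonsimples}, the idempotent is centrally primitive anyway.
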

\begin{proof}
Suppose $L\in\left(x(e-1)d=0/x=0\right)$. Then there exists $a\in QL\backslash\{0\}$ such that $a(e-1)=0$. By \ref{daggerdivpoints}, $Q\Hom_R(L,R)\cong\Hom_Q(QL,Q)$. Thus we need to show that there exists $0\neq f\in \Hom_Q(QL,Q)$ such that $(e-1)\cdot f=0$. Since $e$ is central, $QL=QLe\oplus QL(e-1)$ and $QLe\neq 0$. Take $f\in \Hom_Q(QL,Q)$ such that $f$ is zero on $QL(e-1)$ and non-zero on $QLe$. Then for all $m\in QL$, $(e-1)\cdot f(m)=f(m(e-1))=0$ but $f\neq 0$. Thus there exists $b\in QL^\dagger\backslash\{0\}$ such that $(e-1)\cdot b=0$. There exists $r\in R\backslash\{0\}$ such that $rb\in L^\dagger$ and $(e-1)d\cdot rb=0$. So $L^\dagger\in\left((e-1)dx=0/x=0\right)$.
\end{proof}

\begin{lemma}\label{denseforduality}
Let $a\in \Lambda$. The set of indecomposable $\widehat{\Lambda_P}$-lattices, as $P\in\text{Max}R$ varies, is dense in $\Zg_\Lambda^{tf}\backslash\left(xa=0/x=0\right)$.
\end{lemma}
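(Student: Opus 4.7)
The plan is to show that for any non-empty open subset $U$ of $\Zg_\Lambda^{tf}$ meeting the complement of $\left(xa=0/x=0\right)$, one can find an indecomposable $\widehat{\Lambda_P}$-lattice in $U\setminus\left(xa=0/x=0\right)$ for some $P\in\text{Max}R$. Pick a witness $N\in U\setminus\left(xa=0/x=0\right)$. By \ref{ZgTF}, either $N\in\Zg^{rtf}_{\widehat{\Lambda_P}}$ for some $P$, or $N$ is a simple $Q\Lambda$-module; the two cases are handled separately.

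In the first case, $U\cap\Zg^{rtf}_{\widehat{\Lambda_P}}\setminus\left(xa=0/x=0\right)$ is a non-empty open subset of $\Zg^{tf}_{\widehat{\Lambda_P}}$, using that $\Zg^{rtf}_{\widehat{\Lambda_P}}$ is open in $\Zg^{tf}_{\widehat{\Lambda_P}}$ and that the topology on the $R_P$-reduced part of $\Zg_\Lambda^{tf}$ agrees with that inherited from $\Zg^{tf}_{\widehat{\Lambda_P}}$. Because $\Tf_{\widehat{\Lambda_P}}=\langle\Latt_{\widehat{\Lambda_P}}\rangle$, any non-empty basic open $(\phi/\psi)$ of $\Zg^{tf}_{\widehat{\Lambda_P}}$ is realised by some $\widehat{\Lambda_P}$-lattice; Krull--Schmidt for $\widehat{\Lambda_P}$-lattices together with additivity of pp-pairs then refines this to an indecomposable $\widehat{\Lambda_P}$-lattice (which is a point of the Ziegler spectrum because $\widehat{R_P}$ is complete).

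In the second case $N=S$ is a simple $Q\Lambda$-module on which $a$ acts injectively, and since $\dim_Q S<\infty$, $a$ acts bijectively on $S$. Fix any $P\in\text{Max}R$ and any full $\Lambda$-lattice $L\subseteq S$ (for instance $L=\Lambda s$ with $0\neq s\in S$); then $Q\widehat{L_P}\cong\widehat{Q_P}\otimes_Q S$ as $Q\Lambda$-modules, and $a$ acts bijectively on this space after base change, hence injectively on the $\widehat{R_P}$-lattice $\widehat{L_P}$. Decompose $\widehat{L_P}=L_1\oplus\cdots\oplus L_m$ by Krull--Schmidt; each $L_i$ is an indecomposable $\widehat{\Lambda_P}$-lattice on which $a$ acts injectively, so no $L_i$ lies in $\left(xa=0/x=0\right)$. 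The canonical $Q\Lambda$-linear embedding $S\hookrightarrow\widehat{Q_P}\otimes_Q S=\bigoplus_i QL_i$, combined with simplicity of $S$ and semisimplicity of $Q\Lambda$, shows that $S$ is a direct summand of $QL_i$ for some $i$, so $L_i\in\mcal{V}(S)$. By \ref{canformopenset} applied to $U$, the inclusion $S\in\lambda(U)$ forces $\mcal{V}(S)\subseteq U$, and therefore $L_i\in U\setminus\left(xa=0/x=0\right)$.

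The main obstacle is the divisible case, in which a single indecomposable $\widehat{\Lambda_P}$-lattice must simultaneously specialise to $S$ (to land in $\mcal{V}(S)\subseteq U$) and retain injectivity of $a$ (to avoid $\left(xa=0/x=0\right)$). The key observation that unlocks both conditions at once is that a $Q$-linear injection of a finite-dimensional $Q$-vector space is automatically bijective; this property is stable under base change to $\widehat{Q_P}$ and is inherited by every $\widehat{R_P}$-lattice inside $\widehat{Q_P}\otimes_Q S$, and hence by every Krull--Schmidt summand of $\widehat{L_P}$.
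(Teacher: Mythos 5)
Your Case 2 (the divisible case) is essentially sound, but Case 1 — which you dismiss as the easy case — contains the real gap. The set $\left(xa=0/x=0\right)$ is a \emph{basic open} subset of the Ziegler spectrum, so its complement is closed, and $U\cap\Zg^{rtf}_{\widehat{\Lambda_P}}\backslash\left(xa=0/x=0\right)$ is the intersection of an open set with a closed set; it is not open in $\Zg^{tf}_{\widehat{\Lambda_P}}$, contrary to what you assert. Consequently the principle you invoke (that any non-empty basic open subset of $\Zg^{tf}_{\widehat{\Lambda_P}}$ is opened on some lattice, because $\Tf_{\widehat{\Lambda_P}}=\langle\Latt_{\widehat{\Lambda_P}}\rangle$) does not apply to this set, and even if you apply it only to the open set $U\cap\Zg^{rtf}_{\widehat{\Lambda_P}}$, the indecomposable lattice it produces is only guaranteed to open the pair defining $U$; nothing forces $\ann_M a=0$, i.e.\ the lattice you obtain may perfectly well lie inside $\left(xa=0/x=0\right)$. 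Avoiding an open set is a closed condition, and density-type arguments cannot buy it for free: the whole content of the lemma is precisely that the lattice can be chosen to keep $a$ acting injectively.

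The missing idea (and the paper's actual argument, which needs no case split at all) is to manufacture the lattice from inside the witnessing point $N$ itself. Given $N\in(\phi/\psi)$ with $\ann_N a=0$, choose a finitely generated submodule $L\leq N$ on which the pair is still open (possible since $\phi$ is existential and $N$ is the direct union of its finitely generated submodules); $L$ is then a $\Lambda$-lattice and, being a submodule of $N$, automatically satisfies $\ann_L a=0$. Passing to $H(L)\cong\prod_{P\in\text{Max}R}\widehat{L_P}$ (Lemma \ref{pihulllattices}), the pair remains open on $H(L)$, and $a$ still acts injectively on each $\widehat{L_P}$ (flatness of $\widehat{R_P}$ over $R$). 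Since pp-pairs commute with products, some $\widehat{L_P}$ opens the pair, and Krull--Schmidt plus additivity of pp-pairs yields an indecomposable direct summand which opens the pair and on which $a$ is still injective. This single argument also covers your Case 2, so the elaborate construction via a full lattice in $S$ and $\mcal{V}(S)\subseteq U$, while correct, is unnecessary.
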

\begin{proof}
Suppose that $\left(\phi/\psi\right)\cap(\Zg_\Lambda^{tf}\backslash\left(xa=0/x=0\right))\neq\emptyset$. Pick $N\in \left(\phi/\psi\right)\cap(\Zg_\Lambda^{tf}\backslash\left(xa=0/x=0\right))$. Since $N$ is a direct union of its finitely generated submodules, there exists a finitely generated submodule $L$ of $N$ such that $\phi(L)\supsetneq \psi(L)$. Since $L$ is a submodule of $N$, $L$ is $R$-torsionfree and $\ann_La=0$. Thus $\phi(H(L))\supsetneq\psi(H(L))$ and $\ann_{H(L)}a=0$. Since $H(L)$ is isomorphic to $\prod_{P\in\text{Max}R}\widehat{L_P}$ by \ref{pihulllattices}, for all $P\in\text{Max}R$, $\ann_{\widehat{L_P}}a=0$ and there exists $P\in\text{Max}R$ such that $\phi(\widehat{L_P})\supsetneq\psi(\widehat{L_P})$. Thus there exists a $P\in\text{Max}R$ and a $\widehat{\Lambda_P}$-lattice $M$ such that $\phi(M)\supsetneq\psi(M)$ and $\ann_Ma=0$. Since the category of $\widehat{\Lambda_P}$-lattices is Krull-Schmidt, it follows that there exists an indecomposable $\widehat{\Lambda_P}$-lattice with the required properties.
\end{proof}

The following is proved in the case that $R$ is a discrete valuation domain in \cite{TFpartZgRG}.

\begin{cor}
The set of indecomposable $\widehat{\Lambda_P}$-lattices, as $P\in\text{Max}R$ varies, is a dense subset of $\Zg^{tf}_{\Lambda}$. Therefore, all isolated points in $\Zg_\Lambda^{tf}$ are $\widehat{\Lambda_P}$-lattices for some $P\in\text{Max}R$.
\end{cor}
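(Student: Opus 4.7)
The plan is that this corollary is almost immediate from Lemma \ref{denseforduality}: we just need to choose $a\in\Lambda$ so that $\left(xa=0/x=0\right)$ is empty, and then apply the lemma directly. The obvious choice is $a=1$: the formula $x\cdot 1=0$ is literally $x=0$, so the pp-pair $\left(x\cdot 1=0/x=0\right)$ has identical numerator and denominator and therefore defines the empty open set. Hence Lemma \ref{denseforduality} (with $a=1$) says that the indecomposable $\widehat{\Lambda_P}$-lattices, as $P$ ranges over $\mathrm{Max}\,R$, are dense in $\Zg_\Lambda^{tf}\setminus\emptyset=\Zg_\Lambda^{tf}$, which is the first assertion. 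One should perhaps double-check that the proof of \ref{denseforduality} really goes through for $a=1$: the side conditions $\mathrm{ann}_L a=0$, $\mathrm{ann}_{H(L)}a=0$ and $\mathrm{ann}_{\widehat{L_P}}a=0$ used in that proof are trivially satisfied when $a=1$, so nothing breaks.

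For the second statement, let $N$ be an isolated point of $\Zg_\Lambda^{tf}$, so that $\{N\}$ is open and nonempty. By the density just established, $\{N\}$ must meet the set of indecomposable $\widehat{\Lambda_P}$-lattices, forcing $N$ itself to be an indecomposable $\widehat{\Lambda_P}$-lattice for some $P\in\mathrm{Max}\,R$.

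There is essentially no obstacle: the entire content of the corollary is already packaged inside Lemma \ref{denseforduality}, and specialising the parameter $a$ to a unit of $\Lambda$ removes the excised open set $\left(xa=0/x=0\right)$. The only thing worth flagging is the observation that $1\in\Lambda$ is a legitimate choice of the parameter $a$ in the lemma, which is why the density extends from $\Zg_\Lambda^{tf}\setminus\left(xa=0/x=0\right)$ to all of $\Zg_\Lambda^{tf}$.
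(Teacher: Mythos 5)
Your proof is correct and follows the paper's approach: density is obtained as a special case of Lemma \ref{denseforduality} (the paper does not spell out the choice of $a$, but taking $a=1$, as you do, makes the excluded open set $(x\cdot 1=0\,/\,x=0)$ empty since the lattices annihilate no unit), and the claim about isolated points is exactly the immediate consequence of density that you give. The only difference is that the paper's written proof additionally invokes \cite[2.4]{TfpartZgorders} to show that the indecomposable $\widehat{\Lambda_P}$-lattices are themselves isolated points of $\Zg^{tf}_{\Lambda}$, a complementary fact that is not needed for the statement as written.
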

\begin{proof}
Density is a special case of \ref{denseforduality}. It is shown in \cite[2.4]{TfpartZgorders} that the indecomposable $\widehat{\Lambda_P}$-lattices are isolated in $\Zg^{tf}_{\widehat{\Lambda_P}}$. As explained just after \ref{ZgTF}, we may identify $\Zg^{rtf}_{\widehat{\Lambda_P}}$ with $\Zg^{rtf}_{\Lambda_P}$. Thus the $\widehat{\Lambda_P}$-lattices are isolated in $\Zg^{rtf}_{\Lambda_P}$. Finally, viewed as a subspace of $\Zg^{tf}_\Lambda$, $\Zg^{rtf}_{\Lambda_P}$ is equal to the open set $\left(x=x/P|x\right)$. Thus the indecomposable $\widehat{\Lambda_P}$-lattices are isolated in $\Zg^{tf}_{\Lambda}$.
\end{proof}


Recall that for each $P\in\text{Max} R$, $d_P$ is the isomorphism between the lattice of open subsets of $ \Zg_{\Lambda_P}^{rtf}$ and of ${_{\Lambda_P}}\Zg^{rtf}$ defined in section \ref{dualityordersoverdvr}.

\begin{lemma}\label{dPonsimples}
For all simple $Q\Lambda$-modules $S$ and all $P\in\text{Max}R$, \[d_P(\mcal{V}(S)\cap\Zg^{rtf}_{\Lambda_P})=\mcal{V}(S^*)\cap {_{\Lambda_P}}\Zg^{rtf}.\]
\end{lemma}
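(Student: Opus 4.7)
The plan is to reduce to the lattice-level statement in Lemma \ref{opensubsetsforsimples} and then promote from lattices to all indecomposable pure-injectives using the density provided by \ref{denseforduality}.

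Write $U:=\mcal{V}(S)\cap\Zg^{rtf}_{\Lambda_P}$ and $V:=\mcal{V}(S^*)\cap{_{\Lambda_P}}\Zg^{rtf}$. For any indecomposable right $\widehat{\Lambda_P}$-lattice $L$, Lemma \ref{opensubsetsforsimples} gives $L\in\mcal{V}(S)$ iff $L^{\dagger}\in\mcal{V}(S^*)$, and since $\widehat{\Lambda_P}$-lattices are automatically $R$-reduced $R$-torsion-free and indecomposable pure-injective (as recalled above \ref{lattopenisoonreducedpoints}), this upgrades to $L\in U$ iff $L^{\dagger}\in V$. Combined with the proposition preceding \ref{lattopenisoonreducedpoints}, which states $L\in U$ iff $L^{\dagger}\in d_P(U)$, and with the fact that $L\mapsto L^{\dagger}$ is a bijection between indecomposable right and left $\widehat{\Lambda_P}$-lattices, we conclude that $d_P(U)$ and $V$ contain exactly the same indecomposable left $\widehat{\Lambda_P}$-lattices. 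In particular, the closed complements $d_P(U)^c$ and $V^c$, both computed inside ${_{\Lambda_P}}\Zg^{rtf}$, contain the same set of indecomposable left $\widehat{\Lambda_P}$-lattices; call this common set $L_c$.

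Now apply the left-module analogue of \ref{denseforduality} with $a=(1-e)d\in\Lambda_P$: the indecomposable left $\widehat{\Lambda_P}$-lattices are dense in ${_{\Lambda_P}}\Zg^{tf}\setminus((1-e)dx=0/x=0)$. Intersecting with the open set $(x=x/P|x)$, which cuts out the $R$-reduced part, preserves density and shows that $L_c$ is dense in $V^c$. Therefore $V^c$ equals the Ziegler closure of $L_c$ in ${_{\Lambda_P}}\Zg^{rtf}$, and since the closed set $d_P(U)^c$ contains $L_c$, it must contain this closure. This yields $V^c\subseteq d_P(U)^c$, i.e.\ $d_P(U)\subseteq V$. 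The reverse inclusion follows by the entirely symmetric argument on the right: $d_P^{-1}$ is the analogously defined lattice isomorphism, the equivalences show that $d_P^{-1}(V)$ and $U$ contain the same indecomposable right $\widehat{\Lambda_P}$-lattices, and the direct right-module version of \ref{denseforduality} with the same $a$ gives density of these lattices in $U^c$, so that $d_P^{-1}(V)^c\supseteq U^c$, equivalently $V\subseteq d_P(U)$.

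The main obstacle is promoting the lattice-level identification of $d_P(U)$ and $V$ to an identification of open sets, since $d_P$ is only known as an abstract lattice isomorphism of open-set lattices and is not a priori induced by a homeomorphism. The resolution is that, for the specific basic open set $V$, the complement $V^c$ is described by a single pp-pair to which \ref{denseforduality} applies, so $V^c$ is recovered intrinsically as the Ziegler closure of its lattice content; since $d_P(U)^c$ is a closed set sharing that lattice content, the two closed sets must coincide, and the symmetric argument on the right gives the other inclusion.
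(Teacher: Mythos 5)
Your proposal is correct and follows essentially the same route as the paper: both reduce to the lattice-level equivalence of \ref{opensubsetsforsimples} (together with the identification of $\mcal{V}(S)$, $\mcal{V}(S^*)$ with the pp-pair open sets and the compatibility $L\in U\Leftrightarrow L^\dagger\in d_PU$), then use the density of $\widehat{\Lambda_P}$-lattices from \ref{denseforduality} to show each closed complement contains the other via a closure argument, finishing by the symmetric (left/right) argument and $d_P^2=\mathrm{id}$. No substantive differences to flag.
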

\begin{proof}
We first show that if $L$ is an indecomposable right $\widehat{\Lambda_P}$-lattice and $S$ is a simple right $Q\Lambda$-module then $L\in\mcal{V}(S)$ if and only if $L^\dagger\in\mcal{V}(S^*)$. Let $e$ be a centrally primitive idempotent of $Q\Lambda$ corresponding to $S$. Note that $e$ central and idempotent as an element of $\widehat{Q_P}\widehat{\Lambda}$. We have shown in \ref{opensubsetsforsimples} that $L\in\left(x(e-1)d=0/x=0\right)$ if and only if $L^\dagger\in\left((e-1)dx=0/x=0\right)$. So it is enough to show that $\left((e-1)dx=0/x=0\right)=\mcal{V}(S^*)$. But this is clear because certainly $(e-1)S^*=0$ and thus $e$ is a centrally primitive idempotent corresponding to $S$.

Since, by \ref{denseforduality}, the indecomposable right $\widehat{\Lambda_P}$-lattices are dense in the closed subset $\Zg_{\Lambda_P}^{rtf}\backslash\left(x(e-1)d=0/x=0\right)$ of $\Zg_{\Lambda_P}^{rtf}$,
\[\Zg_{\Lambda_P}^{rtf}\backslash\left(x(e-1)d=0/x=0\right)\subseteq \Zg_{\Lambda_P}^{rtf}\backslash d_P(\left((e-1)dx=0/x=0\right)\cap {_{\Lambda_P}}\Zg^{rtf}).\]
So $d_P(\left((e-1)dx=0/x=0\right)\cap {_{\Lambda_P}}\Zg^{rtf})\subseteq \left(x(e-1)d=0/x=0\right)$. The same argument using left $\widehat{\Lambda_P}$-lattices shows that
\[d_P(\left(x(e-1)d=0/x=0\right)\cap \Zg_{\Lambda_P}^{rtf})\subseteq \left((e-1)dx=0/x=0\right).\]

So, since $d_P^2$ is the identity, \[d_P(\left(x(e-1)d=0/x=0\right)\cap\Zg_{\Lambda_P}^{rtf})=\left((e-1)dx=0/x=0\right)\cap {_{\Lambda_P}}\Zg^{rtf}.\]
%
%
%
%
%
%
%
%
%
%
%
\end{proof}

\begin{definition}\label{dfordedeking}
Let $U$ be an open subset of $\Zg_\Lambda^{tf}$. Define
\[dU:=\bigcup_{P\in\text{Max}R}d_P(U\cap\Zg_{\Lambda_P}^{rtf})\cup\bigcup_{S\in \lambda(U)}\mcal{V}(S^*)\] where $\lambda(U)$ is the set of all $Q\Lambda$-modules in $U$.

We will also use $d$ to denote the analogous map for open subsets of ${_\Lambda}\Zg^{tf}$.
\end{definition}

\begin{theorem}\label{dualitymain}
Let $R$ be a Dedekind domain, $Q$ its field of fractions and $\Lambda$ an $R$-order with $Q\Lambda$ a separable $Q$-algebra.

The mapping $d$ is an isomorphism $d$ between the lattice of open sets of $\Zg_\Lambda^{tf}$ and ${_\Lambda}\Zg^{tf}$ such that
\begin{enumerate}
\item if $L$ is an indecomposable right $\widehat{\Lambda_P}$-lattice then for all open sets $U\subseteq \Zg_\Lambda^{tf}$, $L\in U$ if and only if $L^{\dagger}\in dU$, and
\item for all open sets $U\subseteq \Zg_\Lambda^{tf}$, if $S$ is a simple $Q\Lambda$-module then  $S\in U$ if and only if $S^*\in dU$.
\end{enumerate}
\end{theorem}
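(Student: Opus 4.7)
The plan is to prove \ref{dualitymain} by showing that the map $d$ of Definition \ref{dfordedeking} is well-defined on open sets, order-preserving, and satisfies $d \circ d = \mathrm{id}$ in both directions; order-preservation plus the involution property then give a lattice isomorphism, and the two itemised properties drop out of the computation. Well-definedness is straightforward: each $d_P(U \cap \Zg_{\Lambda_P}^{rtf})$ is open in ${_{\Lambda_P}}\Zg^{rtf}$ by \ref{lattopenisoonreducedpoints}, and ${_{\Lambda_P}}\Zg^{rtf} = \left(x=x/P|x\right)\cap {_\Lambda}\Zg^{tf}$ is itself open in ${_\Lambda}\Zg^{tf}$; the sets $\mcal{V}(S^*)$ are open by the discussion preceding \ref{canformopenset}. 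Monotonicity is immediate, since $U \subseteq V$ yields $\lambda(U) \subseteq \lambda(V)$ and each $d_P$ preserves inclusion.

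The core step is the involution property. For an open $U \subseteq \Zg_\Lambda^{tf}$ I would first compute $dU \cap {_{\Lambda_P}}\Zg^{rtf}$ and $\lambda(dU)$, so that a second application of $d$ can be evaluated via Definition \ref{dfordedeking}. For the reduced part, \ref{dPonsimples} gives $\mcal{V}(S^*) \cap {_{\Lambda_P}}\Zg^{rtf} = d_P(\mcal{V}(S) \cap \Zg_{\Lambda_P}^{rtf})$, whence
\[dU \cap {_{\Lambda_P}}\Zg^{rtf} \;=\; d_P\Bigl( (U \cap \Zg_{\Lambda_P}^{rtf}) \,\cup\, \bigcup_{S \in \lambda(U)} \bigl(\mcal{V}(S) \cap \Zg_{\Lambda_P}^{rtf}\bigr) \Bigr).\]
Since $\mcal{V}(S) \subseteq U$ whenever $S \in \lambda(U)$ (any open set containing the specialization $S$ contains every point $N$ with $S \in \overline{\{N\}}$), the right-hand side collapses to $d_P(U \cap \Zg_{\Lambda_P}^{rtf})$. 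For the divisible part I would prove $\lambda(dU) = \{S^* : S \in \lambda(U)\}$: the inclusion $\supseteq$ is trivial from $S^* \in \mcal{V}(S^*) \subseteq dU$, while for $\subseteq$, a simple $T^* \in dU$ is a closed point by \ref{closedpoints} and is not $R_P$-reduced for any $P$, so it must lie in some $\mcal{V}(S^*)$ with $S \in \lambda(U)$, forcing $T^* = S^*$ by closedness of $\{T^*\}$. Combining these observations with the facts that each $d_P$ is an involution by \ref{lattopenisoonreducedpoints} and that $(S^*)^* = S$ (both correspond to the same centrally primitive idempotent of $Q\Lambda$) gives $d(dU) = U$; the same argument with the roles of left and right exchanged shows the other composition is also the identity.

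Properties (1) and (2) then follow almost formally. For (1), an indecomposable $\widehat{\Lambda_P}$-lattice $L$ lies in $\Zg_{\Lambda_P}^{rtf}$, so $L \in U$ iff $L \in U \cap \Zg_{\Lambda_P}^{rtf}$; by the unnamed proposition at the end of Subsection \ref{dualityordersoverdvr} this is equivalent to $L^\dagger \in d_P(U \cap \Zg_{\Lambda_P}^{rtf})$, which by the computation above equals $dU \cap {_{\Lambda_P}}\Zg^{rtf}$, hence is in turn equivalent to $L^\dagger \in dU$. For (2), $S \in U$ forces $S^* \in \mcal{V}(S^*) \subseteq dU$; conversely, $S^* \in dU$ with $S^*$ simple gives $S^* \in \lambda(dU)$, so by the characterisation of $\lambda(dU)$ we get $S \in \lambda(U) \subseteq U$. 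The main obstacle is the bookkeeping in the identity $dU \cap {_{\Lambda_P}}\Zg^{rtf} = d_P(U \cap \Zg_{\Lambda_P}^{rtf})$, where one must check that the auxiliary $\mcal{V}(S^*)$ contributions to $dU$ supply exactly (and only) those left $R$-reduced points that come from $d_P$ applied to the $\mcal{V}(S)$-portion already absorbed into $U \cap \Zg_{\Lambda_P}^{rtf}$; this is precisely the content of \ref{dPonsimples}.
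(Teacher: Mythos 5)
Your proposal is correct and follows essentially the same route as the paper: prove $d^2=\mathrm{id}$ by combining \ref{dPonsimples}, the involutivity of each $d_P$, the inclusion $\mcal{V}(S)\subseteq U$ for $S\in\lambda(U)$ and the decomposition of \ref{canformopenset}, check monotonicity, and then read off (1) from the proposition at the end of Subsection \ref{dualityordersoverdvr} and (2) from the definition of $d$. The one cosmetic quibble is your appeal to \ref{closedpoints} for ``a simple is a closed point'' (that lemma states the converse direction); the fact you actually need --- that a simple lying in $\mcal{V}(S^*)$ must equal $S^*$ --- follows from the unnamed lemma characterising membership of $\mcal{V}(S)$ via direct summands of $QN$, a point the paper itself leaves implicit.
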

\begin{proof}
Let $U$ be an open subset of $\Zg_\Lambda^{tf}$. We start by showing that for all open subsets $U\subseteq \Zg_\Lambda^{tf}$, $d^2U=U$.
So
\begin{eqnarray*}
  d^2U &=& d[\bigcup_{P}d_P(U\cap\Zg^{rtf}_{\Lambda_P})\cup\bigcup_{S\in \lambda(U)}\mcal{V}(S^*)] \\
   &=& \bigcup_{P}d_P[d_P(U\cap\Zg^{rtf}_{\Lambda_P})\cup\bigcup_{S\in U}\mcal{V}(S^*)\cap{_{\Lambda_P}}\Zg^{rtf}]\cup \bigcup_{S\in \lambda(U)}\mcal{V}(S)\\
   &=& d_P^2(U\cap\Zg^{rtf}_{\Lambda_P})\cup\bigcup_{P}\bigcup_{S\in \lambda(U)}d_P[\mcal{V}(S^*)\cap{_{\Lambda_P}}\Zg^{rtf}]\cup \bigcup_{S\in \lambda(U)}\mcal{V}(S)\\
   &=& \bigcup_{P}(U\cap\Zg^{rtf}_{\Lambda_P})\cup \bigcup_{S\in \lambda(U)}\mcal{V}(S) \\
   &=& U
\end{eqnarray*}

The first two equalities follow from the definition of $d$. The third is true because each $d_P$ is a lattice homomorphism. The fourth follows from \ref{dPonsimples} and the fifth follows from \ref{canformopenset}.

Thus $d$ gives a bijection between the lattice of open subsets of $\Zg_\Lambda^{tf}$ and ${_\Lambda}\Zg^{tf}$. We now just need to show that $d$ preserves inclusion.

Suppose $U\subseteq W$ are open subsets of $\Zg_\Lambda^{tf}$. Then $\lambda(U)\subseteq \lambda(W)$ and $U\cap \Zg^{rtf}_{\Lambda_P}\subseteq W\cap\Zg^{rtf}_{\Lambda_P}$ for all $P\in\text{Max}(R)$. So $d_P(U\cap\Zg^{rtf}_{\Lambda_P})\subseteq d_P(W\cap\Zg^{rtf}_{\Lambda_P})$ for all $P\in\text{Max}R$.  For all open sets $U$, $S\in \lambda(U)$ if and only if $S^*\in \lambda(dU)$. So $\lambda(U)\subseteq \lambda(W)$ implies $\lambda(dU)\subseteq\lambda(dW)$.  Therefore $dU\subseteq dW$.

Finally, $(1)$ holds for $d$ by \ref{lattopenisoonreducedpoints} and $(2)$ holds by definition of $d$.
\end{proof}

We finish this section with a different aspect of duality.

\begin{cor}\label{antiisoMar}
Let $R$ be a discrete valuation domain with maximal ideal generated by $\pi$. Let $k > k_0$. The lattices $[\pi|x,x=x]_{\Tf_\Lambda}$ and $[\pi|x,x=x]_{{_\Lambda}\Tf}$ are anti-isomorphic.
\end{cor}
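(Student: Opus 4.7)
The plan is to factor the desired anti-isomorphism through the pp-lattice of the Artin algebra $\Lambda_k = \Lambda/\Lambda\pi^k$ by composing the right and left versions of Maranda's functor with Herzog's duality. Fix $k = k_0 + 1$ so that $\pi^{k-k_0} = \pi$, and write $\mcal{C} := \langle I \Tf_\Lambda\rangle \subseteq \Mod\text{-}\Lambda_k$ and $\mcal{C}' := \langle I {_\Lambda}\Tf\rangle \subseteq \Lambda_k\text{-}\Mod$ for the definable subcategories generated by Maranda's functor on each side.

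The first reduction is to show that the assignment $\psi \mapsto \widehat{\psi}$ from \ref{clasMarlattiso} descends to a lattice isomorphism between $[\pi|x, x=x]_{\Tf_\Lambda}$ and $[\pi|x, x=x]_{\mcal{C}}$. Injectivity is immediate from the defining property of $\widehat{\psi}$. Surjectivity follows by lifting: given $\chi \in \pp^1_{\Lambda_k}$ with $\pi|x \leq \chi$, write it in matrix form $\exists \mathbf{y}\, (A\mathbf{y} + bx = 0)$, lift $A, b$ to $A', b'$ over $\Lambda$, and take $\psi(x) := \exists \mathbf{y}, \mathbf{z}\,(A'\mathbf{y} + b'x = \pi^k\mathbf{z})$; this pp formula satisfies $\widehat{\psi} \sim_{\mcal{C}} \chi$. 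The same argument on the left gives $[\pi|x, x=x]_{{_\Lambda}\Tf} \cong [\pi|x, x=x]_{\mcal{C}'}$. Herzog's duality then produces an anti-isomorphism from $[\pi|x, x=x]_{\mcal{C}}$ to $[x=0, \pi x=0]_{\mcal{C}'}$, using $D(\pi|x) = \pi x = 0$, $D(x=x) = x=0$, and the identity $D\mcal{C} = \mcal{C}'$ from \ref{leftimrightimmaranda}.

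What remains is to produce a lattice isomorphism $[x=0, \pi x = 0]_{\mcal{C}'} \cong [\pi|x, x=x]_{\mcal{C}'}$ internal to the left $\Lambda_k$-module setting. The key observation is that for every $N \in \mcal{C}'$, left multiplication by $\pi^{k-1}$ gives a bijection $N/\pi N \xrightarrow{\sim} \pi^{k-1} N$, and moreover $\pi^{k-1} N = \ker(\pi: N \to N)$. Both identities are verified for $N = M/\pi^k M$ with $M \in {_\Lambda}\Tf$ directly from the $R$-torsion-freeness of $M$, and each extends to the whole of $\mcal{C}'$ because it amounts to an equality of two pp-definable subgroups, which persists under the direct products, direct limits, and pure submodules defining the closure. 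From this one reads off mutually inverse lattice maps $\sigma(x) \mapsto \exists y\,(\sigma(y) \wedge x = \pi^{k-1} y)$ and $\tau(x) \mapsto (\pi|x) + \tau(\pi^{k-1} x)$, the latter being a valid pp formula by substitution. Composing the three lattice maps produces the desired anti-isomorphism. I expect the main obstacle to be verifying surjectivity in the first reduction and the extension of the kernel/image identities from the image of Maranda's functor to all of $\mcal{C}'$; both hinge on the behaviour of pp-definable subgroups under definable-closure operations.
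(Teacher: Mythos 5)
Your proposal is correct and takes essentially the same route as the paper: transfer the interval along Maranda's functor via \ref{clasMarlattiso}, dualise using $D\langle I\Tf_\Lambda\rangle=\langle I{_\Lambda}\Tf\rangle$ from \ref{leftimrightimmaranda}, then shift $[x=0,\pi x=0]$ back to $[\pi|x,x=x]$ inside $\langle I{_\Lambda}\Tf\rangle$ by multiplication by $\pi^{k-1}$. Your explicit mutually inverse pp maps in the last step are just an unwound form of the Goursat's lemma step the paper cites, and your lifting argument supplies the surjectivity the paper draws from the full statement of \cite[4.6]{TfpartZgorders}.
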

\begin{proof}
Let $p=\pi+\pi^k\Lambda$. By \ref{clasMarlattiso}, $[\pi|x,x=x]_{\Tf_\Lambda}$ is isomorphic to $[p|x,x=x]_{\langle I\Tf_\Lambda\rangle}$ and $[\pi|x,x=x]_{{_\Lambda}\Tf}$ is isomorphic to $[p|x,x=x]_{\langle I{_\Lambda}\Tf\rangle}$. So, it is enough to show that $[p|x,x=x]_{\langle I\Tf_\Lambda\rangle}$ is anti-isomorphic to $[p|x,x=x]_{\langle I{_\Lambda}\Tf\rangle}$.

We have seen in \ref{leftimrightimmaranda} that $D\langle I\Tf_\Lambda\rangle=\langle I{_\Lambda}\Tf \rangle$. Thus Prest's duality for pp formulas, gives an anti-isomorphism between $\pp^1_{\Lambda_k}(\langle I\Tf_\Lambda\rangle)$ and ${_{\Lambda_k}}\pp^1(\langle I{_\Lambda}\Tf \rangle)$. Thus $[p|x,x=x]_{\langle I\Tf_\Lambda\rangle}$ is anti-isomorphic to $[x=0,px=0]_{\langle I{_\Lambda}\Tf \rangle}$.

Applying Goursat's lemma, \cite{Zieglermodules}, the formula $y=xp^{k-1}$ induces a lattice isomorphism between the intervals $[x=x,p^{k-1}x=0]_{\langle I{_\Lambda}\Tf \rangle}$ and $[y=0,p^{k-1}|y]_{\langle I{_\Lambda}\Tf \rangle}$. On $\langle I{_\Lambda}\Tf \rangle$, $p^{k-1}x=0$ is equivalent to $p|x$ and $p^{k-1}|y$ is equivalent to $py=0$. Thus $[x=0,px=0]_{\langle I{_\Lambda}\Tf \rangle}$ is isomorphic to $[p|x,x=x]_{\langle I{_\Lambda}\Tf \rangle}$.
\end{proof}

For the definition of the m-dimension of a modular lattice see \cite[Section 7.2]{PSL}.

\begin{cor}\label{mdimdual}
Suppose $R$ is a Dedekind domain with field of fractions $Q$, $\Lambda$ is an $R$-order and $Q\Lambda$ is separable. The m-dimension of $\pp_\Lambda^1(\Tf_{\Lambda})$ and ${_\Lambda}\pp^1({_\Lambda}\Tf)$ are equal.
\end{cor}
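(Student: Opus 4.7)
The plan is to reduce the problem to the case where $R$ is a discrete valuation domain, and then derive the equality from Corollary~\ref{antiisoMar} together with structural observations about $\pp^1$.

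For the reduction, I would use the decomposition $\Zg_\Lambda^{tf}=\bigcup_{P\in\mathrm{Max}\,R}\Zg_{\Lambda_P}^{tf}$ from \cite{TfpartZgorders} (recalled in the discussion preceding Theorem~\ref{ZgTF}). Since two pp-formulas coincide on $\Tf_\Lambda$ if and only if they coincide on every $\Tf_{\Lambda_P}$, there is a lattice embedding $\pp^1_\Lambda(\Tf_\Lambda)\hookrightarrow\prod_{P}\pp^1_{\Lambda_P}(\Tf_{\Lambda_P})$ in which each factor is simultaneously a lattice quotient of $\pp^1_\Lambda(\Tf_\Lambda)$. Since the m-dimension of a product of modular lattices is the supremum of its factors' m-dimensions, this yields $\mathrm{m\text{-}dim}\,\pp^1_\Lambda(\Tf_\Lambda)=\sup_P\,\mathrm{m\text{-}dim}\,\pp^1_{\Lambda_P}(\Tf_{\Lambda_P})$, with the analogous equality on the left. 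It therefore suffices to treat the case in which $R$ is a DVR.

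In the DVR case, Corollary~\ref{antiisoMar} immediately gives equality of m-dimensions of the intervals $[\pi|x,x=x]_{\Tf_\Lambda}$ and $[\pi|x,x=x]_{{_\Lambda}\Tf}$. To lift this to the full pp-lattice, I would exploit the lattice isomorphism $[x=0,\pi|x]_{\Tf_\Lambda}\cong\pp^1_\Lambda(\Tf_\Lambda)$ induced by $\phi\mapsto\phi(\pi x)$ (with inverse $\psi\mapsto\exists y\,(x=\pi y\wedge\psi(y))$), whose validity rests on the injectivity of multiplication by $\pi$ on $R$-torsion-free modules, together with the analogous isomorphism on the left. Together these reduce the desired equality of m-dimensions of the full pp-lattices to the equality of m-dimensions of the intervals $[x=0,\pi|x]$ on the two sides.

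Finally, to establish equality of m-dimension for the lower intervals, I would adapt the proof of \ref{antiisoMar}: transport $[x=0,\pi|x]_{\Tf_\Lambda}$ over to $\langle I\Tf_\Lambda\rangle$ using a variant of \ref{clasMarlattiso}, apply Prest's duality through the identification $\langle I\Tf_\Lambda\rangle\cong D\langle I{_\Lambda}\Tf\rangle$ from Lemma~\ref{leftimrightimmaranda}, and use Goursat's lemma to return to $[x=0,\pi|x]_{{_\Lambda}\Tf}$. The principal obstacle I anticipate is in adapting \ref{clasMarlattiso} to pp-formulas lying below $\pi|x$ rather than above $\pi^{k-k_0}|x$, since the standard correspondence $\psi\leftrightarrow\widehat\psi$ is formulated for formulas dominating a sufficiently high power of $\pi$. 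To circumvent this, one can iterate the scaling isomorphisms $[\pi^{n+1}|x,\pi^n|x]\cong[\pi|x,x=x]$ (valid for each $n\geq 0$ by the injectivity argument above) to approximate the lower interval by interval-chain gluings of the upper interval, and then argue that the resulting anti-isomorphisms are compatible across different choices of the Maranda parameter $k$, yielding equal m-dimension in the limit.
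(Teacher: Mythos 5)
Your overall shape (localise at each maximal ideal, then in the DVR case invoke \ref{antiisoMar}) matches the paper, but both of your bridging steps have genuine gaps, and the paper closes exactly these gaps by citing results from \cite{TfpartZgorders} rather than by general lattice principles. First, your reduction to the DVR case rests on the claim that the m-dimension of a product of modular lattices is the supremum of the m-dimensions of the factors. For infinite products this is false: every factor of $\prod_{i\in\N}\{0<1\}$ has m-dimension $0$, while the product (the power set of $\N$) has undefined m-dimension, since modding out finite intervals leaves the atomless quotient $2^{\N}/\mathrm{fin}$. Since $\mathrm{Max}\,R$ is in general infinite, an embedding of $\pp^1_\Lambda(\Tf_\Lambda)$ into $\prod_P\pp^1_{\Lambda_P}(\Tf_{\Lambda_P})$ together with the quotient maps does not by itself give the sup formula; the paper instead quotes \cite[3.9]{TfpartZgorders}, which establishes precisely this equality using the specific structure of $\Tf_\Lambda$.

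Second, and more seriously, your passage from the interval $[\pi|x,x=x]$ to the full lattice does not go through. The isomorphism $\pp^1_\Lambda(\Tf_\Lambda)\cong[x=0,\pi|x]_{\Tf_\Lambda}$ via multiplication by $\pi$ is correct, but your plan for comparing the lower intervals on the two sides is not: the union of the slices $[\pi^{n+1}|x,\pi^n|x]$ does not exhaust $[x=0,\pi|x]_{\Tf_\Lambda}$ (it misses $x=0$ and every formula, such as $\pi|x\wedge x\lambda=0$, whose solution set contains no $M\pi^n$), and the m-dimension of the lower interval is not determined by the m-dimensions of these slices together with a limit over the Maranda parameter $k$. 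Indeed Maranda's functor has kernel the $R$-divisible modules, so no choice of $k$ can see the bottom of the lattice, which is exactly where the discrepancy lives: the paper's source \cite[3.8]{TfpartZgorders} states that the m-dimension of $\pp^1_{\Lambda_P}(\Tf_{\Lambda_P})$ equals that of $[P|x,x=x]_{\Tf_{\Lambda_P}}$ \emph{plus one}, and it is this result, applied on both sides and combined with \ref{antiisoMar}, that the paper uses. Your argument would need to reprove something of this strength, and the sketched ``interval-chain gluing in the limit'' does not do so.
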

\begin{proof}
For each $P\in\text{Max}R$, by \cite[3.8]{TfpartZgorders}, the m-dimension of $\pp_{\Lambda_P}^1(\Tf_{\Lambda_P})$ is equal to the m-dimension of $[P|x,x=x]_{\Tf_{\Lambda_P}}$ plus $1$. Since $R_P$ is discrete valuation domain, by \ref{antiisoMar}, the m-dimension of $[P|x,x=x]_{\Tf_{\Lambda_P}}$ is equal to the m-dimension of $[P|x,x=x]_{{_{\Lambda_P}}\Tf}$. Thus, by \cite[3.8]{TfpartZgorders}, ${_{\Lambda_P}}\pp^1({_{\Lambda_P}}\Tf)$ has m-dimension equal to the m-dimension of $[P|x,x=x]_{\Tf_{\Lambda_P}}$ plus $1$ i.e. equal to the m-dimension of $\pp_{\Lambda_P}^1(\Tf_{\Lambda_P})$.

By \cite[3.9]{TfpartZgorders}, the m-dimension of $\pp_\Lambda^1(\Tf_{\Lambda})$ (respectively ${_\Lambda}\pp^1({_\Lambda}\Tf)$) is equal to the supremum of the m-dimensions of $\pp_{\Lambda_P}^1(\Tf_{\Lambda_P})$ (respectively ${_{\Lambda_P}}\pp^1({_{\Lambda_P}}\Tf)$) where $P\in\text{Max}R$.
\end{proof}

We now translate the above corollary into a result about the Krull-Gabriel dimensions of $(\Latt_\Lambda,\Ab)^{fp}$ and $({_\Lambda}\Latt,\Ab)^{fp}$. See \cite[2.1]{Geigle} for a definition of the Krull-Gabriel dimension of a (skeletally) small abelian category.

If $\mcal{C}\subseteq \mod\text{-}S$ is a covariantly finite subcategory then $(\mcal{C},\Ab)^{\text{fp}}$ is equivalent to $(\mod\text{-}S,\Ab)^{\text{fp}}/\mcal{S}(\mcal{C})$, the Serre localisation of $(\mod\text{-}S,\Ab)^{fp}$ at the Serre subcategory
\[\mcal{S}(\mcal{C}):=\{F\in (\mod\text{-}S,\Ab)^{fp} \st FC=0 \text{ for all } C\in\mcal{C}\}.\] See \cite{Herendringloccohfun} for details.


By \cite[13.2.2]{PSL}, the Krull-Gabriel dimension of $(\mcal{C},\Ab)^{fp}/\mcal{S}(\mcal{C})$ is equal to the m-dimension of $\pp^1_S(\langle \mcal{C} \rangle)$.

Applying this to $\Latt_\Lambda$, which is a covariantly finite subcategory of $\mod\text{-}\Lambda$, we get that the Krull-Gabriel dimension of $(\Latt_\Lambda,\Ab)$ is equal to the m-dimension of $\pp_\Lambda^1\Tf_\Lambda$. Thus we get the following corollary to \ref{mdimdual}.

\begin{cor}\label{KGdual}
Suppose $R$ is a Dedekind domain with field of fractions $Q$, $\Lambda$ is an $R$-order and $Q\Lambda$ is separable. The Krull-Gabriel dimension of $(\Latt_\Lambda,\Ab)^{fp}$ is equal to the Krull-Gabriel dimension of $({_\Lambda}\Latt,\Ab)^{fp}$.
\end{cor}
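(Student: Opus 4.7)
The plan is to apply the covariant-finiteness machinery sketched just before the statement to both $\Latt_\Lambda$ and ${_\Lambda}\Latt$, reducing the claim to \ref{mdimdual}. First I would invoke the fact (noted in the excerpt) that $\Latt_\Lambda$ is a covariantly finite subcategory of $\mod\text{-}\Lambda$ to get, via the Herrmann-Ringel localisation result, the equivalence
\[(\Latt_\Lambda,\Ab)^{fp}\simeq (\mod\text{-}\Lambda,\Ab)^{fp}/\mcal{S}(\Latt_\Lambda).\]
Combined with \cite[13.2.2]{PSL}, this tells us that the Krull-Gabriel dimension of $(\Latt_\Lambda,\Ab)^{fp}$ equals the m-dimension of $\pp^1_\Lambda(\langle\Latt_\Lambda\rangle)=\pp^1_\Lambda(\Tf_\Lambda)$, where the last equality uses $\langle\Latt_\Lambda\rangle=\Tf_\Lambda$ as noted in the preliminaries.

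Next I would run the symmetric argument on the left: ${_\Lambda}\Latt$ is covariantly finite in $\Lambda\text{-}\mod$ by the same reasoning (this is a property of noetherian orders and does not depend on side), so the same two ingredients give that the Krull-Gabriel dimension of $({_\Lambda}\Latt,\Ab)^{fp}$ equals the m-dimension of ${_\Lambda}\pp^1({_\Lambda}\Tf)$.

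Finally I would invoke Corollary \ref{mdimdual}, which states precisely that the m-dimensions of $\pp^1_\Lambda(\Tf_\Lambda)$ and ${_\Lambda}\pp^1({_\Lambda}\Tf)$ agree, to conclude the equality of Krull-Gabriel dimensions. The only step that requires any thought is the appeal to covariant-finiteness of $\Latt_\Lambda$ and ${_\Lambda}\Latt$; this is the conceptual hinge, but it is folklore for orders (every finitely presented $\Lambda$-module admits a right $\Latt_\Lambda$-approximation given by reducing modulo $R$-torsion), so the bulk of the work has already been done in \ref{mdimdual}, and this corollary is essentially a translation of that m-dimension statement through the Krull-Gabriel dictionary.
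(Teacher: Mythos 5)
Your proposal is correct and follows essentially the same route as the paper: covariant finiteness of $\Latt_\Lambda$ (and ${_\Lambda}\Latt$) plus the localisation equivalence and \cite[13.2.2]{PSL} identify the Krull--Gabriel dimensions with the m-dimensions of $\pp^1_\Lambda(\Tf_\Lambda)$ and ${_\Lambda}\pp^1({_\Lambda}\Tf)$, and then \ref{mdimdual} finishes the argument. Only a cosmetic remark: the approximation $M\to M/t(M)$ killing the $R$-torsion is a \emph{left} $\Latt_\Lambda$-approximation (which is what covariant finiteness requires), not a right one.
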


\textbf{Acknowledgements}
\noindent
I would like to thank Carlo Toffalori for reading an early version of the results in sections \ref{SMarfun} and \ref{S-duality}, and more generally for support and encouragement during this project.

\bibliographystyle{hplain}
\bibliography{Marandainitial}

\end{document}